\documentclass[12pt]{article}
\usepackage{fullpage}
\usepackage[utf8]{inputenc}
\usepackage{hyperref}
\usepackage{authblk}

\usepackage{amsmath, amssymb, mathtools, amsthm}
\usepackage{relsize,graphicx} 
\usepackage{fancyhdr}
\usepackage{bbm} 
\usepackage{verbatim}

\usepackage[T1]{fontenc}
\usepackage{graphicx}

\usepackage[dvipsnames]{xcolor}
\colorlet{dlav}{ForestGreen!300!}






\usepackage{mathpazo}


\usepackage{fourier} 
\usepackage[scaled=0.875]{helvet} 



\newtheorem{theorem}{Theorem}
\newtheorem{remark}{Remark}
\newtheorem{lemma}[theorem]{Lemma}


\numberwithin{equation}{section}
\numberwithin{theorem}{section}
\numberwithin{remark}{section}


\newcommand{\alphavar}{\tilde{\alpha}}


\newcommand{\C}{\mathlarger{\mathcal{C}}}

\newcommand{\F}{\mathcal{F}}

\newcommand{\B}{\mathcal{B}}



\newcommand{\Real}{\mathbb{R}}


\newcommand{\imply}{\Longrightarrow \quad }

\newcommand{\indi}[1]{\mathbbm{1}_\mathsmaller{#1}}


\newcommand{\expec}{\mathbb{E}}

\newcommand{\prob}{\mathbb{P}}

\newcommand{\varU}{\tilde{U}}

\newcommand{\renewalpt}{\left(0, -g/(1+\gamma) \right)}

\pagestyle{fancy}
\fancyhf{}
\rhead{}
\lhead{}
\rfoot{\thepage}
\title{Inert drift system in a viscous fluid: Steady state asymptotics and exponential ergodicity}

\author{Sayan Banerjee\footnote{\texttt{sayan@email.unc.edu}} \ \ \ \ \ Brendan Brown\footnote{\texttt{bb@live.unc.edu}}}
\affil{University of North Carolina, Chapel Hill, USA}
\date{}							

\begin{document}
\maketitle
\begin{abstract}
We analyze a system of stochastic differential equations describing the joint motion of a massive (inert) particle in a viscous fluid in the presence of a gravitational field and a Brownian particle impinging on it from below, which transfers momentum proportional to the local time of collisions. We study the long-time fluctuations of the velocity of the inert particle and the gap between the two particles, and we show convergence in total variation to the stationary distribution is exponentially fast. We also produce matching upper and lower bounds on the tails of the stationary distribution and show how these bounds depend on the system parameters. A renewal structure for the process is established, which is the key technical tool in proving the mentioned results.\\

\noindent {\em Keywords and phrases:} Reflected Brownian motion; viscosity; gravitation; local time; inert drift; renewal time; exponential ergodicity; regenerative process; total variation distance; Harris recurrence; petite sets.\\

\noindent {\em 2010 Mathematics Subject Classification:} Primary 60J60, 60K05; secondary 60J55, 60H20.
\end{abstract}

\tableofcontents
\section{Introduction and summary of results}
We study an inert drift system which models the joint motion of a massive (inert) particle and a Brownian particle in a viscous fluid in the presence of a gravitational field. The inert particle is impinged from below by the Brownian particle which transfers momentum to it proportional to the `local time' of collisions. This acceleration is countered by the viscosity of the fluid and the gravitational force acting on the inert particle. 

The interaction is non-standard because transfer of momentum is `non-Newtonian:' It can be thought of as an `infinite number of collisions' between the particles in any finite time interval, such that each collision results in an `infinitesimal momentum transfer.' This inert drift system is a simplified mathematical model for the motion of a semi-permeable membrane in a viscous fluid in the presence of gravity \cite{einstein,knight}. The membrane plays the role of the inert particle, which is permeable to the microscopic fluid molecules but not the macroscopic Brownian particle. The following system of stochastic differential equations characterizes the joint motion of particles in the present model:
\begin{equation}
\begin{cases}
dX_t = dB_t - dL_t\nonumber \\
dV_t = -(\gamma V_t + g) dt + dL_t\nonumber \\
dS_t = V_t dt\nonumber 
\end{cases}
\end{equation}
starting from $(X_0, V_0, S_0) \in \mathbb{R}^3$ with $S_0 \ge X_0$ and $V_0 \in (-g/\gamma, \infty)$. See \eqref{eqn:velocity_lower_bounded} for justification of the restricted range for $V_0$.
Here $\gamma, g > 0$ respectively denote the viscosity coefficient and the acceleration due to gravity, $B_t$ is a one-dimensional standard Brownian motion, $S_t$ is the trajectory of the inert particle, $V_t$ is the velocity of the inert particle and $L_t$ is the local time of collisions, which is defined as the unique continuous, non-negative, non-decreasing process such that $\int_0^t \indi{S_t - X_t = 0}dL_u = L_t$ and $S_t - X_t \geq 0 $ for all $t$. We will assume $S_0 = 0$ unless otherwise stated.

\cite{knight} initiated the study of inert drift systems by studying the case where $g=\gamma=0$. In this case, the system becomes transient, meaning the inert particle escapes. \cite{knight} determined the laws of the inverse velocity process and the `escape velocity.' Since then, numerous inert drift systems have been studied \cite{bass,white,BW}. \cite{Barnes2017} studied hydrodynamic limits for inert drift type particle systems. Moreover, stochastic differential equations somewhat similar in flavor to inert drift systems have recently appeared as diffusion limits of queuing networks like the join-the-shortest-queue discipline \cite{EG,queue,queue2}. \cite{grav} studied the inert drift model with $g>0, \gamma = 0$. Unlike \cite{knight} the model is recurrent, in other words the inert particle does not escape. The paper showed that the process hits a specified point almost surely and that the hitting times at this point has finite expectation. By decomposing the path into excursions between successive hitting times of such a point, the paper showed the process $(S_t-X_t, V_t)$ has a renewal structure and converges in total variation distance to a unique stationary distribution. Moreover, it was shown by solving an associated partial differential equation that the process $(S_t-X_t, V_t)$ has an explicit product form stationary distribution which is Exponential in the first coordinate and Gaussian in the second coordinate. Using this explicit form along with the renewal structure, the paper obtained sharp fluctuation estimates for the velocity $V_t$ and the gap between the particles $S_t - X_t$.

In this article, we analyze the full model, in which $g>0, \gamma>0$. In contrast with \cite{grav}, there is no explicit closed form for the joint stationary distribution of the velocity and the gap. Nevertheless, we establish a renewal structure analogous to \cite{grav} and thus obtain a tractable renewal theoretic representation of the stationary distribution (Theorem \ref{thm:statmeas_renewal}). By analyzing the excursions between successive renewal times, we obtain precise upper and lower bounds on the tails of the stationary distribution (Theorem \ref{thm:tails_under_stationarity}). Besides furnishing Exponential tails for the gap and Gaussian tails for the velocity, these bounds display the explicit dependence of the stationary distribution tails on the model parameters $g,\gamma$. We exploit renewal structure further to obtain parameter-dependent fluctuation estimates for the gap $S_t - X_t$ and the velocity $V_t$ (Theorem \ref{thm:fluctuations}), which also imply law of large numbers results for $S_t$ and $X_t$ (Theorem \ref{thm:lln}). 

One surprising aspect of the model arises from Theorem \ref{thm:fluctuations} which shows that $(S_t - X_t)/\log t$ is $O(\gamma/g)$ for large $t$ as compared to $O(1/g)$ for the model without viscosity studied in \cite{grav}. Thus, the fluctuation results and tail estimates for the case $\gamma=0$ cannot be recovered by taking $\gamma \rightarrow 0$ in our results. This shows that the \emph{qualitative behavior of the steady state changes on introducing viscosity}, and that the rare events contributing to tail estimates of the stationary measure arise in very different ways between the $\gamma=0$ and $\gamma>0$ cases (see Remark \ref{rare} after Theorem \ref{thm:fluctuations}).

In addition, we show that convergence to stationarity is exponentially fast, by appealing to Harris' Theorem (see \cite{hairandmat,meyntweedie} for versions of the theorem). The standard approach to Harris' Theorem relies on the existence of continuous densities for transition laws with respect to Lebesgue measure (see \cite{budhirajalee,cooke,mattinglypillai,cattiaux}) which, in turn, is shown by establishing that the generator of the process is hypoelliptic. However, the generator of our process $(S_t - X_t, V_t)$ is not hypoelliptic in the interior of the domain (when $S_t - X_t>0$) and the transition laws of $(S_t - X_t, V_t)$ do not have densities with respect to Lebesgue measure. This fact is in essence a consequence of the velocity's deterministic evolution when $S_t - X_t>0$. Therefore, possible ergodicity arises from non-trivial interactions between the drift, the Brownian motion and the boundary reflections. 

To circumvent these technical challenges, we once again use the renewal structure of our process to show a minorization condition \eqref{eqn:minorization_cond} and to obtain exponential moment estimates for hitting times to certain sets. Exponential moments provide a suitable Lyapunov function and thereby a drift condition \eqref{eqn:drift_cond} used to obtain exponential ergodicity via the techniques of \cite{meyntweedie,thor,ergodicity}.

Now we give an outline of the organization of the article. In Subsections \ref{renst} and \ref{mr2}, we describe the renewal structure of the system and state the main results of the article. In Section \ref{exun} we prove the existence and uniqueness of the process and, in particular, prove its strong Markov property and a Skorohod representation for the local time. In Section \ref{strep}, we obtain tail estimates on the distribution of the renewal time which, in particular, imply its integrability and the existence and uniqueness of the stationary measure. It also gives a tractable renewal theoretic representation of the stationary measure. In Section \ref{flucinter}, fluctuation bounds for the velocity and gap process between two successive renewal times are obtained.
These bounds imply tail estimates on the stationary measure and path fluctuation results, which are proved in Section \ref{osctail}. In Section \ref{geoerg}, we prove that the process converges to its stationary measure exponentially fast in total variation distance. Finally, Appendix \ref{hittimeest} is devoted to some technical estimates for hitting times which are used throughout the article.
\subsection{Notation}
We set the following notation:
\begin{eqnarray}
H_t &=& S_t - X_t \nonumber \\
\sigma(t) &=& \inf \{s \geq t\, | \,\, H_s = 0\}\nonumber \\
\tau_b^V &=& \inf \{s \geq 0\, | \,\, V_s = b\} \nonumber \\
\tau^V_B &=& \inf \{s \geq 0\, | \,\, V_s \in B\} \nonumber \\
\tau_x^H &=& \inf \{s \geq 0\, | \,\, H_s = x\} \nonumber \\
\tau_A &=& \inf \{s \geq 0\, | \,\, \left(H_s, V_s\right) \in A\} \nonumber \\
P^t\left( \left(h, \nu\right), \cdot\>\right) &=& \prob_{(h, \nu)}\left(\left(H_t, V_t\right) \in \cdot \>\right) = \prob\left(\left(H_t, V_t\right) \in \cdot \,\, | \,\, \left(H_0, V_0\right) = (h, \nu) \right) \nonumber\\
P^t\left(\left(h, \nu\right), \> f\right) &=& \expec_{\left(h, \nu\right)} f\left(H_t, V_t \right) \nonumber
\end{eqnarray}
where $f$ is a measurable function such that $\expec_{\left(h, \nu\right)} f\left(H_t, V_t \right)$ is defined, $b \in \mathbb{R}$, $x \ge 0$ and $B \subset \mathbb{R}, A \subset \mathbb{R}^2$ are Borel-measurable sets. Unless otherwise stated, $S_0 = X_0 = B_0 = 0$. The notation $|\cdot|$ will be used for the Euclidean norm on $\Real$ or $\Real^2$, the space being clear from context. We will work with system equations reformulated as
\begin{equation}\label{eqn:system}
\begin{cases}
dH_t = V_t\,dt - dB_t + dL_t \\
dV_t = -(\gamma V_t + g)\,dt + dL_t
\end{cases}
\end{equation}
for initial conditions $(H_0, V_0) = (h,\nu) \in \mathbb{R}_+ \times \left(-\frac{g}{\gamma}, \infty\right)$ and local time $L$ defined as the unique continuous, non-negative, non-decreasing process such that $\int_0^t \indi{H_t = 0}dL_u = L_t$ and $H_t \geq 0 $ for all $t$.
Sometimes, we will write $L^{(h, \nu)}_t$ in place of $L_t$ to elucidate dependence on the initial conditions ${(h, \nu)}$. We will show in Theorem \ref{thm:existence} that the Skorohod representation for the local time is valid, namely
\begin{equation}\label{sm}
L_t = \sup_{u\leq t}\left(-h + B_u-\int_0^uV_wdw\right) \vee 0.
\end{equation}
We assume $C' > 0$ are fixed throughout.  We will write the state-space of the solution to \eqref{eqn:system} as $S = \Real_+ \times \left(-\frac{g}{\gamma}, \infty\right)$, but also use $S$ for the state-space of the more general diffusion \eqref{eqn:system2} when there is no danger of confusion.

$K, K'$ etc. will always denote positive constants, not depending on $\gamma, g$. $c, c', C, C'$ etc. will denote positive constants dependent on $\gamma, g$. Values of constants might change from equation to equation without mention. Throughout, $\gamma, g > 0$ are fixed.

\subsection{System properties}\label{section:properties}
We adapt general techniques from stochastic differential equations to our context to show that a strong solution to \eqref{eqn:system} exists, is pathwise unique and has the strong Markov property, proven in Theorem \ref{thm:existence}.

We state a few fundamental properties of the system's motion that are integral to all results in this paper. For initial conditions $(H_0, V_0) = (h, \nu) \in S$, since $L_t$ is non-negative for each $t \ge 0$, \eqref{eqn:system} shows
\begin{equation} \label{eqn:velocity_lower_bounded}
    V_t \geq \left(v + g/\gamma\right)e^{-\gamma t} - g/\gamma > -g/\gamma \quad \forall \, t \geq 0,
\end{equation}
which justifies the state-space $S$ rather than $\Real_+ \times \Real$. Moreover, for $t < \sigma(0)$,
\begin{eqnarray} \label{eqn:system_starts_interior}
V_t &=& \nu -\int_0^t (\gamma V_s + g) \, ds = \nu -\gamma S_t - gt = \left(\nu + g/\gamma\right)e^{-\gamma t} - g/\gamma,\nonumber \\
H_t &=& h + S_t - B_t = h + \nu/\gamma - V_t/\gamma - B_t - t g/\gamma  \nonumber \\
&\leq&h + \nu/\gamma + g/\gamma^2 - B_t - tg/\gamma
\end{eqnarray}
where the last inequality follows from $V_t \ge -g/\gamma$ for all $t \ge 0$.
Thus $H$ is dominated by Brownian motion with drift $-g/\gamma$ in $S^\circ$. The last equality in the velocity equation in \eqref{eqn:system_starts_interior} comes from solving the ODE for $V$ obtained from \eqref{eqn:system} without $L_t$, which is zero for $t \in [0, \sigma(0))$. \eqref{eqn:system_starts_interior} shows the velocity increases on $\partial S$ and only there, i.e. at $t$ when $H_t = 0$. Otherwise stated, if $(H_0, V_0) \in S^\circ$,  
\begin{equation} \label{eqn:increase_on_boundary_only}
    V_t \text{  is decreasing on  } \quad t \in \left[0, \, \sigma(0)\right) \quad \text{and} \quad  H_{\tau_b^V} = 0  \quad \text{  for   } b > V_0.
\end{equation}
Properties \eqref{eqn:system_starts_interior} and \eqref{eqn:increase_on_boundary_only} apply equally to the process started from some arbitrary time after corresponding changes in the stopping times.

We conclude this section with an implication of \eqref{eqn:velocity_lower_bounded} and \eqref{eqn:system_starts_interior} that we often will use: \eqref{eqn:velocity_lower_bounded} shows $V_t$ cannot hit a level $a \in (-g/\gamma, V_0)$ before $\left(V_0 + g/\gamma\right)e^{-\gamma t} - g/\gamma$ does so at $\frac{1}{\gamma}\log\left(\frac{V_0 + g/\gamma}{a + g/\gamma} \right)$. When $H_0 > 0$ and $\frac{1}{\gamma}\log\left(\frac{V_0 + g/\gamma}{a + g/\gamma} \right) \leq  \sigma(0)$, \eqref{eqn:system_starts_interior} shows $V$ hits $a$ exactly at $\frac{1}{\gamma}\log\left(\frac{V_0 + g/\gamma}{a + g/\gamma} \right)$. In summary, for $V_0 > a$,
\begin{equation} \label{eqn:hitting_time_velocity}
    \tau_a^{V} \geq \frac{1}{\gamma}\log \left(\frac{V_0 + g/\gamma}{a + g/\gamma} \right), \quad \tau_a^{V} = \frac{1}{\gamma}\log \left(\frac{V_0 + g/\gamma}{a + g/\gamma} \right) \>\> \text{for} \> \> H_0 > 0, \> \sigma(0) \geq \frac{1}{\gamma}\log \left(\frac{V_0 + g/\gamma}{a + g/\gamma} \right).
\end{equation}
We note this also implies $\left\{\tau_a^V \leq \sigma(0) \right\} = \left\{\tau_a^V \leq \sigma(0), \> \tau_a^V =  \frac{1}{\gamma}\log\left(\frac{V_0 + g/\gamma}{a + g/\gamma} \right)\right\}$ for $H_0 > 0, V_0 > a$.
\subsection{Main results: Stability and renewal theory}\label{renst}
At the heart of our proofs is the renewal structure of the process, which we now formalize. For any stopping time $\alpha$, define $\tau_b^V(\alpha) = \inf \left\{t \geq \alpha \,\big|\> V_t = b\right\}$, so that $\tau_b^V = \tau_b^V(0)$. Define a sequence of stopping times where the process $(H_t, V_t)$ visits the point $\renewalpt$, which we call the renewal point, as follows: Set $a = -(g + g/2\gamma)/(1+\gamma)$ and $b = -(g - g/2(1+\gamma))/(1+\gamma)$. Define $\zeta_{-1}=0$ and
\begin{eqnarray} \label{eqn:renewal_def}
\zeta &\coloneqq& \zeta_0 = \inf \left\{t \geq \tau_a^V\wedge\tau_b^V \,\big|\> \left( H_t, V_t\right) = \renewalpt\right\}, \nonumber \\
\zeta_{n+1} &\coloneqq& \inf \left\{t \geq \tau_a^V(\zeta_n)\wedge\tau_b^V(\zeta_n) \,\big|\> \left( H_t, V_t\right) = \renewalpt\right\}  \quad \quad n \geq 0.
\end{eqnarray}
The choices of $a$ and $b$ are not too important as long as $a \in (-g/\gamma, -g/(1+\gamma))$ and $b \in (-g/(1+\gamma),0)$, and their values are chosen as above for computational convenience.

We follow an approach similar to \cite{grav} and \cite{queue} in showing that $\zeta$ (and hence each $\zeta_j$) is integrable by decomposing the path between carefully chosen hitting times. 
\begin{theorem} \label{thm:renewal} There exist constants $t_0(\gamma, g), c > 0$ such that
$$\prob_{\mathsmaller{\renewalpt}}\left(\zeta > t^2\right)  \leq e^{-c\, t}$$
for all $t > t_0(\gamma, g)$.
\end{theorem}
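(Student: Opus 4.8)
Write $\nu_*=-g/(1+\gamma)$ for the velocity coordinate of the renewal point $\renewalpt$, and recall $a<\nu_*<b$. The plan rests on three ingredients: a deterministic structural fact reducing ``hit $\renewalpt$'' to ``$V$ climbs up to $\nu_*$''; a ``maneuver'' lemma giving, from any bounded configuration, a uniform lower bound on the chance of completing a renewal within a fixed time; and a tiling of $[0,t^2]$ into $\asymp t$ blocks of length $\asymp t$ that upgrades the maneuver lemma to the stretched-exponential tail.

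\textbf{Step 1 (a structural fact).} First I will show: if $V_s<\nu_*$ at some time $s$ and $\tau:=\inf\{u>s:V_u=\nu_*\}<\infty$, then $H_\tau=0$, hence $(H_\tau,V_\tau)=\renewalpt$. Since $V$ is continuous, $V_u<\nu_*$ for all $u\in[s,\tau)$, so $V_\tau-V_{\tau-\delta}>0$ for every $\delta\in(0,\tau-s]$; rearranging \eqref{eqn:system} gives $L_\tau-L_{\tau-\delta}=(V_\tau-V_{\tau-\delta})+\int_{\tau-\delta}^{\tau}(\gamma V_u+g)\,du>0$, where $\gamma V_u+g>0$ comes from \eqref{eqn:velocity_lower_bounded}. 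Thus $L$ increases in every left neighbourhood of $\tau$, and since $L$ increases only on the (closed) set $\{u:H_u=0\}$ we conclude $H_\tau=0$. In particular, from any initial state with $V_0<\nu_*$, the hitting time of $\renewalpt$ equals $\tau^V_{\nu_*}$. This is exactly where the degeneracy of the model enters: $V$ is deterministic in $S^\circ$ and increases only on $\partial S$, so hitting the single point $\renewalpt$ becomes the one-dimensional event that $V$ reaches level $\nu_*$ from below.

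\textbf{Step 2 (the maneuver).} Fix a compact ``good set'' $G=\{(h,\nu):0\le h\le K,\ a'\le\nu\le M_0\}$ with $a'\in(-g/\gamma,a)$ and $M_0$ large, all depending only on $\gamma,g$. I will show there are $T_0,p_0>0$ (depending only on $\gamma,g$) with $\prob_{(h,\nu)}\!\left(\tau_{\renewalpt}\le T_0\right)\ge p_0$ for all $(h,\nu)\in G$. If $\nu>\nu_*$: force $H>0$ on a time interval of length at most $\frac1\gamma\log\frac{M_0+g/\gamma}{\nu_*+g/\gamma}+1$, so that by \eqref{eqn:system_starts_interior}--\eqref{eqn:hitting_time_velocity} the velocity decreases below $\nu_*$, then hit $H=0$ shortly after, keeping $H$ bounded throughout; this lands the process at a state $(0,\nu')$ with $\nu'<\nu_*$ in a fixed compact set, reducing matters to the remaining case. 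If $\nu\le\nu_*$ (possibly after the previous step, now with $h=0$): with probability bounded below $H$ first reaches $[0,\epsilon]$ within time $T_0/2$ (so $V$ has dropped by at most a fixed amount), and then, again with probability bounded below, $H$ stays in $[0,\epsilon]$ over the next $T_0/2$ while accumulating at least a fixed amount $\ell_0$ of local time, which forces $V$ up to level $\nu_*$; by Step 1 the first time $V=\nu_*$ the process sits at $\renewalpt$. Each sub-event is a Brownian small-ball / local-time event of probability bounded below uniformly over $G$ by standard reflected-Brownian-motion estimates (cf.\ Appendix \ref{hittimeest}).

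\textbf{Step 3 (recovery and tiling).} Using the domination of $H$ by a Brownian motion with drift $-g/\gamma$ while the process is in $S^\circ$ (see \eqref{eqn:system_starts_interior}), the bound $V_t\le(\nu+g/\gamma)e^{-\gamma t}-g/\gamma$, and Appendix \ref{hittimeest}, one gets: (i) from a state with $H=h,\ V=\nu$ the hitting time $\tau_G$ obeys $\prob(\tau_G>s)\le C\exp\!\big(-c(s-C'(h+\nu)_+-C')\big)$; and (ii) from $G$, the gap and velocity both stay $\le t$ over the next $C_\star t$ units of time with probability $\ge1-Ce^{-ct}$ (for $t$ large compared to a constant), while a failed maneuver leaves $(H,V)$ in a fixed compact set. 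Restarting at $T_1:=\tau_a^V\wedge\tau_b^V$: because $V$ cannot stay inside $(a,b)$ for long --- either $H$ makes an excursion long enough to drag $V$ down to $a$ or it hugs $0$ long enough to push $V$ up to $b$, and one of these occurs in every window of fixed length with probability bounded below regardless of the current state --- we have $\prob_{\renewalpt}(T_1>s)\le Ce^{-cs}$, and on $\{T_1\le s\}$, $H_{T_1}\le s$ off an event of probability $\le Ce^{-cs}$ while $V_{T_1}\in\{a,b\}$. Now partition $[0,t^2]$ (time measured from $T_1$) into $N\asymp t$ blocks of length $L=C_\star t$, with $C_\star=C_\star(\gamma,g)$ large enough that (i) lets the process reach $G$ within $L/2$ with probability $\ge1-Ce^{-ct}$ whenever $H\vee V\le t$ at the block's start. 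Call a block \emph{good} if $H\vee V\le t$ at its left endpoint; the first block is good off an event of probability $\le Ce^{-ct}$, a good block produces a completed renewal within it with probability $\ge p_0$ (reach $G$, then apply Step 2), and a good block is followed by a good block with probability $\ge1-Ce^{-ct}$ (reach $G$, then apply (ii)). By the strong Markov property and a union bound, $\prob_{\renewalpt}\big(\tau_{\renewalpt}\circ\theta_{T_1}>t^2\big)\le Ce^{-ct}+N\,Ce^{-ct}+(1-p_0+Ce^{-ct})^{N}\le Ce^{-ct}$ for $t>t_0(\gamma,g)$; combining with the tails of $T_1$ and $H_{T_1}$ and relabelling $t$ yields the theorem.

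\textbf{Main obstacle.} The heart of the matter is Step 2: $\renewalpt$ is a single point, which a nondegenerate planar diffusion would never hit, so the whole argument hinges on exploiting the degeneracy through Step 1 and then showing --- uniformly over the compact set $G$ and within a fixed time --- that a reflected Brownian motion can be confined near the origin long enough to pump the velocity up to exactly level $\nu_*$ (the overshoot being harmless since only the first passage is used). Making these Brownian small-ball and local-time estimates quantitative and uniform in the initial state, and then matching the $\asymp t$ blocks of length $\asymp t$ to the claimed $e^{-ct}$-at-time-$t^2$ rate (in particular absorbing the recovery time of a gap as large as $t$), is the technical core; the $t^2$ scaling is precisely the price of needing $\asymp t$ worth of time per renewal attempt in order to first recover from a possibly large gap.
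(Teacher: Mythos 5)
Your Step 1 is correct and is essentially the paper's observation \eqref{eqn:increase_on_boundary_only}, stated slightly more generally (your derivation via left-increments of $L$ is valid and a touch cleaner). Your overall strategy, however, is genuinely different from the paper's: the paper decomposes the excursion $[\renewalpt,\zeta]$ explicitly into sub-excursions via the stopping times $\alpha_j$ (and their analogues $\beta_j,\tilde\alpha_j$) and bounds each sub-excursion length using the hitting-time lemmas in Appendix~\ref{hittimeest}, then sums these bounds over a geometrically-tailed random number of sub-excursions. You instead propose a Harris-style argument: a uniform minorization on a compact set $G$ (the ``maneuver'' lemma), a recovery estimate into $G$, and a tiling of $[0,t^2]$ into $\asymp t$ blocks of length $\asymp t$. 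Your tiling arithmetic and the matching to the $t^2$-scaling are correct. The conceptual difference is worth noting: the paper proves the renewal-time tail \emph{first} and then derives Harris-type properties (Section~\ref{geoerg}) from it, whereas you are trying to run a Harris-style argument directly to get the renewal-time tail; both directions are sensible.

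That said, several of your load-bearing claims are stated but not proven, and at least one of them is delicate enough that I would not accept it as a routine exercise. The hardest one, which you yourself flag, is the maneuver lemma: you need a uniform-over-$G$, fixed-time, fixed-probability bound for $V$ to rise from $\nu<\nu_*$ to exactly $\nu_*$ by accumulating local time while $H$ is confined near the origin. The subtlety is that the ``effective'' upward drift of $V$ (local time accumulation rate $\approx |V|$ minus deterministic decay $\gamma V + g$, roughly $-(1+\gamma)V-g$) vanishes as $V\to\nu_*$; the naive comparison to Brownian motion with positive drift, which is exactly what Lemma~\ref{lemma:vel_increase_below_renewal_tailprob} uses, degenerates precisely at $u=\nu_*$ ($\epsilon_0\to 0$). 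The paper sidesteps this by never targeting $\nu_*$ directly: it first drives $V$ up to a strictly lower level $-(g+g/4\gamma)/(1+\gamma)$ (where the BM-with-positive-drift comparison is nondegenerate, Lemma~\ref{lemma:vel_increase_below_renewal_tailprob}), and then uses the interval-escape Lemma~\ref{lemma:vel_escape_interval} to get $V$ out of $[a,-g/(1+\gamma)]$. Your small-ball argument can be made to work — the deterministic-decay deficit over a \emph{fixed} window $T_0$ is bounded by $T_0\,g/(1+\gamma)$ since $V<\nu_*$, so a sufficiently large fixed local-time accumulation $\ell_0$ beats it — but this needs to be proved uniformly in the initial point of $G$ and quantitatively, which is roughly as much work as the paper's two-step decomposition. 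Similarly, your recovery estimate (i) and sup-bound (ii) are plausible (indeed (i) with linear dependence on $(h+\nu)_+$ is the correct scaling, cf.\ Lemma~\ref{lemma:velocity_tail_bounded_by_exponential}), but as written they are assertions, not proofs. So I'd classify this as a viable alternative route rather than a complete proof: the structure is sound, the scaling is right, but the three quantitative lemmas underlying Steps 2 and 3 each require their own estimates and do not come for free from what is already in the paper.
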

Since each $\zeta_j$ is integrable and the process is strong Markov, we have
\begin{equation} \label{eqn:renewal_sequence}
    \left\{\zeta_{j+1} - \zeta_{j}\right\}_{j\geq0} \> \text{  and  } \> \left\{\left(H_t, V_t\right)_{t \in \left[\zeta_j, \zeta_{j+1}\right)} \right\}_{j\geq0} \quad \text{ are i.i.d. }
\end{equation}
The existence and a representation of the stationary distribution, along with an ergodicity result for time averages, follows from the integrability of $\zeta$ and the observation \eqref{eqn:renewal_sequence}  using the techniques developed in Ch. 10 of \cite{thor}.
\begin{theorem} \label{thm:statmeas_renewal} 
The solution to equation \eqref{eqn:system} has a unique stationary distribution $\pi$. For $A \in \B(S)$,
\begin{equation*} 
\pi(A) = \frac{\expec_{\mathsmaller{\renewalpt}}\left(\int_0^\zeta \, \indi{ (H_t, V_t) \in A} \, dt\right) }{\expec_{\mathsmaller{\renewalpt}}\>\left(\zeta \right)}.
\end{equation*}
In addition, for all $f: S \mapsto \Real$ bounded and measurable, we have
\begin{equation*} 
\lim_{t \to \infty} \> \frac{1}{t} \int_0^t P^s((h, \nu), f) \> ds = \int f(y) \, \pi(dy).
\end{equation*}
\end{theorem}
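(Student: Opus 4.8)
The plan is to read off the stationary distribution and the ergodic average directly from the regenerative structure \eqref{eqn:renewal_sequence}. Started from the renewal point $\renewalpt$, the process $(H_t,V_t)$ is a classical regenerative process: the blocks $\{(H_t,V_t)_{t\in[\zeta_j,\zeta_{j+1})}\}_{j\ge0}$ are i.i.d.\ by the strong Markov property (since $(H_{\zeta_j},V_{\zeta_j})=\renewalpt$ deterministically), and by Theorem \ref{thm:renewal} the cycle length satisfies $\prob_{\renewalpt}(\zeta>s)\le e^{-c\sqrt s}$ for all large $s$, so $0<\expec_{\renewalpt}(\zeta)<\infty$ — positivity holding because at $\renewalpt$ one has $V_0\in(a,b)$, hence $\zeta\ge\tau_a^V\wedge\tau_b^V>0$ a.s. Consequently
\[
\pi(A):=\frac{\expec_{\renewalpt}\!\big(\int_0^\zeta\indi{(H_t,V_t)\in A}\,dt\big)}{\expec_{\renewalpt}(\zeta)},\qquad A\in\B(S),
\]
is a genuine probability measure on $S$, and the renewal--reward theorem (Ch.~10 of \cite{thor}) gives, for every bounded measurable $f\colon S\to\Real$,
\[
\frac1t\int_0^t f(H_s,V_s)\,ds\;\xrightarrow[t\to\infty]{}\;\int f\,d\pi\qquad\prob_{\renewalpt}\text{-a.s.},
\]
the only inputs being \eqref{eqn:renewal_sequence}, $\expec_{\renewalpt}(\zeta)<\infty$, and the bound $\big|\int_{\zeta_j}^{\zeta_{j+1}}f(H_s,V_s)\,ds\big|\le\|f\|_\infty(\zeta_{j+1}-\zeta_j)$ on the per-cycle rewards.

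To obtain the displayed Cesàro limit from an arbitrary initial condition $(h,\nu)\in S$, I would first show $\prob_{(h,\nu)}(\zeta_0<\infty)=1$, so that under $\prob_{(h,\nu)}$ the process is a \emph{delayed} regenerative process whose post-$\zeta_0$ cycles have the same (integrable) law as under $\prob_{\renewalpt}$; the initial segment then contributes $o(1)$ to the time average and the a.s.\ limit above persists. Almost-sure finiteness of $\zeta_0$ from $(h,\nu)$ follows from the path decomposition used to prove Theorem \ref{thm:renewal}, using that $V$ strictly decreases in $S^\circ$ and increases only on $\{H=0\}$ (see \eqref{eqn:increase_on_boundary_only}) while $H$ is dominated by a Brownian motion with drift $-g/\gamma$ (see \eqref{eqn:system_starts_interior}) and so returns to the boundary, together with the hitting-time estimates of Appendix \ref{hittimeest}; this in fact yields $\expec_{(h,\nu)}(\zeta_0)<\infty$. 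Taking $\prob_{(h,\nu)}$-expectations in the a.s.\ limit and applying bounded convergence (integrands bounded by $\|f\|_\infty$) gives $\frac1t\int_0^tP^s((h,\nu),f)\,ds=\expec_{(h,\nu)}\!\big[\frac1t\int_0^tf(H_s,V_s)\,ds\big]\to\int f\,d\pi$, which is the second assertion. For \textbf{existence}, a regenerative process with finite mean cycle length admits a time-stationary version whose one-dimensional marginal is precisely $\pi$ (Ch.~10 of \cite{thor}); since $(H_t,V_t)$ is strong Markov, the existence of such a stationary version forces $\pi P^t=\pi$ for all $t\ge0$, i.e.\ $\pi$ is stationary for \eqref{eqn:system}. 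For \textbf{uniqueness}, if $\pi'$ is any stationary distribution then by stationarity and Fubini $\int f\,d\pi'=\int\!\big(\frac1t\int_0^tP^s((h,\nu),f)\,ds\big)\,\pi'(d(h,\nu))$ for every $t$; letting $t\to\infty$, the inner term converges pointwise to $\int f\,d\pi$ and is bounded by $\|f\|_\infty$, so bounded convergence yields $\int f\,d\pi'=\int f\,d\pi$ for all bounded measurable $f$, whence $\pi'=\pi$.

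The only non-routine ingredient is the a.s.\ finiteness of the hitting time $\zeta_0$ of the renewal point $\renewalpt$ from an arbitrary configuration in $S$ — and, behind it, the integrability $\expec_{\renewalpt}(\zeta)<\infty$, which is exactly Theorem \ref{thm:renewal}. Both rely on the model-specific geometry: the deterministic decrease of the velocity away from the boundary, and the transient (negatively drifting) Brownian majorant for the gap that forces repeated returns to $\{H=0\}$, quantified by the hitting-time tail bounds of Appendix \ref{hittimeest}. Once those are in hand, the i.i.d.\ cycle decomposition, the renewal--reward theorem, the bounded-convergence passage, and the uniqueness argument are all standard.
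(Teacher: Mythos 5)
Your proposal is correct and follows essentially the same route as the paper: both build $\pi$ from the regenerative cycles via Theorem 2.1 of Ch.~10 in \cite{thor}, and both obtain the Ces\`aro ergodic limit from the i.i.d.\ cycle decomposition \eqref{eqn:renewal_sequence} together with the strong law of large numbers (the paper via the sandwich inequality \eqref{ergineq}, you via the renewal--reward theorem, which is the same mechanism). Your additional care with the delayed first cycle — noting that $\prob_{(h,\nu)}(\zeta_0<\infty)=1$ must be verified for an arbitrary initial condition and that this follows from the path decomposition and Appendix~\ref{hittimeest} estimates — makes explicit a step the paper leaves implicit (it is addressed later in Lemma~\ref{lemma:structural}), and your Fubini/bounded-convergence uniqueness argument spells out what the paper summarizes as ``the second claim \dots also implies uniqueness.''
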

\subsection{Main results: Path fluctuations, tail estimates for stationary measure and exponential ergodicity}\label{mr2}
The renewal structure laid out in Subsection \ref{renst} allows us to make statements about the long-time behavior of the system by studying its behavior in the (random) time interval $[0, \zeta)$, starting from the renewal point $\renewalpt$. An analysis of fluctuations in this random time interval translates to tail estimates for the stationary distribution $\pi$ as displayed in Theorem \ref{thm:tails_under_stationarity}. It also produces long-time oscillation estimates for $V_t$ snd $H_t$, given in Theorem \ref{thm:fluctuations}.
\begin{theorem} \label{thm:tails_under_stationarity} There exists positive constants $y'(\gamma, g), c', c$ such that for all $y > y'(\gamma, g)$,
\begin{equation*}
    e^{-c'} \> e^{-4 \, (1+\gamma)\left(y + \frac{g}{1+\gamma}\right)^2} \leq \pi\left(\Real_+ \times (y, \infty) \right) \leq  \>e^{c} \>   \>e^{-\frac{1+\gamma}{8}\left(y + \frac{g}{1+\gamma}\right)^2}.
\end{equation*}
Also, there exists positive constants $x'(\gamma, g), C', C$ such that for all $x > x'(\gamma, g)$,
\begin{equation*}
    e^{-C'} e^{- \frac{4 g x}{\gamma}} \leq \pi\left((x, \infty) \times \Real\right) \leq e^{C} \,\, e^{- \frac{g x}{32 \gamma}}.
\end{equation*}
\end{theorem}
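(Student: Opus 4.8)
Write $\theta := 1+\gamma$ and $m := -g/(1+\gamma)$, so that $\renewalpt = (0,m)$ and $y+\tfrac{g}{1+\gamma} = y-m$. Everything is driven by the renewal representation of Theorem~\ref{thm:statmeas_renewal},
\[
\pi\bigl(\Real_+\times(y,\infty)\bigr) = \frac{\expec_{\renewalpt}\!\int_0^\zeta \indi{V_t>y}\,dt}{\expec_{\renewalpt}\zeta},
\qquad
\pi\bigl((x,\infty)\times\Real\bigr) = \frac{\expec_{\renewalpt}\!\int_0^\zeta \indi{H_t>x}\,dt}{\expec_{\renewalpt}\zeta},
\]
together with Theorem~\ref{thm:renewal}, which gives $\prob_{\renewalpt}(\zeta>s)\le e^{-c\sqrt s}$ for $s$ large, hence finiteness of $\expec_{\renewalpt}\zeta\in(0,\infty)$ and of all moments of $\zeta$. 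The denominators therefore contribute only $(\gamma,g)$-dependent constants, and the task is to bound, above and below, the expected time a regeneration cycle spends in $\{V>y\}$ and in $\{H>x\}$. Two deterministic comparisons do the work. First, $W_t:=V_t-H_t$ obeys $dW_t=-(\theta V_t+g)\,dt+dB_t$ with no local-time term; since $V_t=W_t+H_t\ge W_t$, a comparison theorem gives $W_t\le\tilde W_t$ for all $t$, where $\tilde W$ is the Ornstein--Uhlenbeck process $d\tilde W_t=-(\theta\tilde W_t+g)\,dt+dB_t$, $\tilde W_0=m$, with stationary law $N(m,\tfrac1{2\theta})$. Because $V$ increases only on $\{H=0\}$ by \eqref{eqn:increase_on_boundary_only} and $H_0=0$, $\sup_{t\le\zeta}V_t$ is attained at a time with $H_t=0$, where $W_t=V_t$, so $\sup_{t\le\zeta}V_t\le\sup_{t\le\zeta}\tilde W_t$. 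Second, applying the Brownian domination \eqref{eqn:system_starts_interior} on each excursion of $H$ into $S^{\circ}$ and using that the running infimum of $\Lambda_t:=\int_0^tV_s\,ds-B_t$, which governs $H_t=\Lambda_t-\inf_{s\le t}\Lambda_s$, is attained at boundary times, one gets
\[
\sup_{t\le\zeta}H_t\;\le\;\tfrac1\gamma\bigl(\sup_{t\le\zeta}V_t\bigr)^{+}+\tfrac{g}{\gamma^2}+D_\zeta,\qquad
D_\zeta:=\sup_{0\le u\le t\le\zeta}\Bigl(\bigl(B_u+\tfrac g\gamma u\bigr)-\bigl(B_t+\tfrac g\gamma t\bigr)\Bigr),
\]
where $D_\zeta$ is the maximal drawdown over $[0,\zeta]$ of a Brownian motion with positive drift $g/\gamma$, whose tail decays exponentially at rate of order $g/\gamma$ against the time horizon.

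\textbf{Upper bounds.} For the velocity, split at a deterministic horizon $T$:
\[
\expec_{\renewalpt}\!\int_0^\zeta\indi{V_t>y}\,dt\;\le\;T\,\prob_{\renewalpt}\!\bigl(\sup_{t\le T}\tilde W_t>y\bigr)+\expec_{\renewalpt}\!\bigl[\zeta\,\indi{\zeta>T}\bigr].
\]
By Theorem~\ref{thm:renewal} the last term is at most a polynomial in $T$ times $e^{-c\sqrt T}$; a standard running-maximum estimate for $\tilde W$ (chop $[0,T]$ into intervals of length $1/\theta$ and union-bound) bounds $\prob_{\renewalpt}(\sup_{t\le T}\tilde W_t>y)$ by a polynomial in $T$ times $e^{-\kappa\theta(y-m)^2}$ with a fixed $\kappa>\tfrac18$. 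Choosing $T$ of order $(y-m)^4$ balances the two exponents and, absorbing polynomial factors for $y$ large, gives $\pi(\Real_+\times(y,\infty))\le e^{c}e^{-\frac{1+\gamma}{8}(y-m)^2}$. For the gap, the displayed bound yields
\[
\prob_{\renewalpt}\!\bigl(\sup_{t\le\zeta}H_t\ge x\bigr)\le\prob_{\renewalpt}\!\bigl(\sup_{t\le\zeta}V_t\ge\tfrac{\gamma x}{2}\bigr)+\prob_{\renewalpt}\!\bigl(D_\zeta\ge\tfrac x2-\tfrac{g}{\gamma^2}\bigr);
\]
the first term decays like $e^{-cx^2}$ since the threshold $\gamma x/2$ is linear in $x$ and $\sup_{t\le\zeta}V_t\le\sup_{t\le\zeta}\tilde W_t$ has Gaussian tails, while the second decays exponentially at rate of order $g/\gamma$, so $\prob_{\renewalpt}(\sup_{t\le\zeta}H_t\ge x)\le Ce^{-gx/(2\gamma)}$ for $x$ large. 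Cauchy--Schwarz, $\expec_{\renewalpt}[\zeta\,\indi{\sup_{t\le\zeta}H_t\ge x}]\le(\expec_{\renewalpt}\zeta^2)^{1/2}\prob_{\renewalpt}(\sup_{t\le\zeta}H_t\ge x)^{1/2}$, then gives $\pi((x,\infty)\times\Real)\le e^{C}e^{-gx/(4\gamma)}$, which implies the stated bound.

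\textbf{Lower bounds.} Each is obtained by exhibiting, inside a single cycle started from $\renewalpt$, an explicit rare event of the right exponential probability on which the relevant coordinate spends a $(\gamma,g)$-dependent positive amount of time above the threshold; the small overshoot below the target level is what produces that occupation time, and one checks the cycle has not regenerated before it accrues. For the velocity, fix $\alpha:=\tfrac32$, put $Y:=m+\alpha(y-m)>y$, let $v$ ramp linearly from $m$ to $Y$ on $[0,\sqrt3/\theta]$ and stay at $Y$ on $[\sqrt3/\theta,\sqrt3/\theta+s_0]$ with $s_0:=\tfrac{\log\alpha}{2\theta}$, and set $\phi(t):=\int_0^t\bigl(\dot v_s+\theta(v_s-m)\bigr)\,ds$. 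One checks directly that $B\equiv\phi$ drives \eqref{eqn:system} to $H\equiv0$, $V\equiv v$, with $L$ strictly increasing (consistent with \eqref{sm}); by continuity of the solution map in the uniform norm, with modulus depending only on $\gamma,g$, on $E_y:=\{\|B-\phi\|_{\infty,[0,\sqrt3/\theta+s_0]}<\epsilon\}$ for $\epsilon=\epsilon(\gamma,g)$ small and $y$ large we have $V_t>y$ throughout $[\sqrt3/\theta,\sqrt3/\theta+s_0]$ and the cycle is still open there (as $V$ stays far above $m$), so $\int_0^\zeta\indi{V_t>y}\,dt\ge s_0$ on $E_y$. A short computation gives $\tfrac12\int\dot\phi_s^2\,ds=\alpha^2\theta(y-m)^2\bigl(\tfrac{9+6\sqrt3}{18}+\tfrac{\log\alpha}{4}\bigr)\le4\theta(y-m)^2$, so the Cameron--Martin small-ball inequality yields $\prob_{\renewalpt}(E_y)\ge e^{-c'}e^{-4\theta(y-m)^2}$, whence $\pi(\Real_+\times(y,\infty))\ge e^{-c'}e^{-4(1+\gamma)(y-m)^2}$. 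For the gap, first steer the process from $\renewalpt$ over an $O(1)$ window---again by conditioning $B$ close to a fixed, $x$-independent path at constant cost---to a state $(h_1,\nu_1)$ with $h_1>0$ and $\nu_1<a$, so $\tau_a^V$ has occurred and the cycle cannot close until $H$ returns to $0$; from such a state $H$ pathwise dominates, up to the hitting time of $0$, a Brownian motion with drift $-g/\gamma$ started at $h_1$ (since $V_t>-g/\gamma$ always), which reaches level $x+1$ before $0$ with probability $\tfrac{e^{2g h_1/\gamma}-1}{e^{2g(x+1)/\gamma}-1}\ge(e^{2g h_1/\gamma}-1)e^{-2g(x+1)/\gamma}$ by a scale-function computation, and conditionally on that, with probability bounded below $H$ stays above $x$ for a further $(\gamma,g)$-dependent positive time, all within the still-open cycle. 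Hence $\expec_{\renewalpt}\!\int_0^\zeta\indi{H_t>x}\,dt\ge e^{-C'}e^{-2gx/\gamma}$, and $\pi((x,\infty)\times\Real)\ge e^{-C'}e^{-2gx/\gamma}\ge e^{-C'}e^{-4gx/\gamma}$.

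\textbf{Main obstacle.} The upper bounds are comparatively soft once the two comparisons and the $\zeta$-tail of Theorem~\ref{thm:renewal} are in hand. The substance lies in the lower bounds: one must build rare events inside a single regeneration cycle whose probabilities decay at \emph{precisely} the advertised exponential rates while simultaneously forcing a non-vanishing occupation time above the threshold. This is what dictates the overshoot ($Y>y$ for the velocity, level $x+1$ for the gap) and the verification that the cycle does not regenerate prematurely, and in the velocity case it is what forces the Cameron--Martin cost of the steering path---hence the choice of $\alpha$ and of the ramp length $\sqrt3/\theta$---to come out below the target constant $4$.
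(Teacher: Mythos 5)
Your approach is genuinely different from the paper's and, as far as I can tell, it is sound, though it leans on a few facts you would need to supply. The paper does not touch OU comparison, drawdowns, or Cameron--Martin tubes; instead it first proves two stand-alone cycle estimates (Theorem~\ref{thm:velocity_bounds} via the excursion bookkeeping of Lemmas~\ref{lemma:vel_hits_large_before_small}--\ref{lemma:bubble_excursion_number}, and Theorem~\ref{thm:gap_bounds} via the hitting-time Lemmas~\ref{lemma:gap_increases_before_zero}--\ref{lemma:gap_hits_zero_before_vel_large} and stopping-time decompositions), and then derives Theorem~\ref{thm:tails_under_stationarity} from these by plugging into the renewal representation: the upper bounds via Cauchy--Schwarz against $\expec_{\renewalpt}\zeta^2<\infty$, and the lower bounds by forcing $V$ (resp.\ $H$) to hit $2y$ (resp.\ $2x$) inside a cycle and using the deterministic velocity decay \eqref{eqn:hitting_time_velocity} (resp.\ a scale-function hitting-time bound) to manufacture occupation time. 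Your replacement is leaner on the upper-bound side: the observation that $W=V-H$ solves a driftless-local-time SDE dominated by OU, together with the fact that $\sup V$ is achieved on $\{H=0\}$ via \eqref{eqn:increase_on_boundary_only}, replaces all of Section~\ref{sec:vel_bounds}; and the clean pathwise inequality $\sup_{t\le\zeta}H_t\le\gamma^{-1}(\sup V)^++g/\gamma^2+D_\zeta$ replaces the gap excursion analysis. The time-horizon split against $\prob(\zeta>T)\le e^{-c\sqrt T}$ does what the paper's $\sqrt{\expec\zeta^2}$ does. For the lower bounds, the tube argument is heavier machinery than the paper's scale-function computations, but it delivers the occupation-time requirement automatically and, by choice of $\alpha$ and ramp length, lands a Cameron--Martin exponent $\approx 2.65\theta(y-m)^2 < 4\theta(y-m)^2$, comfortably inside the target; the gap lower bound reproduces the paper's scale-function step after a constant-cost steering event.

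Two things you rely on but should state and prove. First, your tube argument needs continuity of $B\mapsto(H,V)$ in the uniform norm (with modulus independent of $y$), not continuity in the initial condition, which is what Theorem~\ref{thm:existence} proves; the same Gronwall/Skorohod-Lipschitz argument gives it, but it is an unproven ingredient here. Second, your OU running-max and drawdown tail bounds are asserted in words; to get $\kappa>\tfrac18$ and the exponential rate for $D_\zeta$ uniformly with the $\prob(\zeta>T)$ tail, you need to carry out the Borell--TIS (or covering) estimate and the reflected-BM-with-drift estimate carefully, including the fact that the balance parameter $T$ can be taken as a large multiple of $(y-m)^4$ (resp.\ $z^2$) so that the $\zeta$-tail term is subdominant regardless of the unspecified constant $c$ in Theorem~\ref{thm:renewal}. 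These are routine but not trivial, and the writeup currently treats them as one-liners. A smaller point: in the $E_y$ event you should verify explicitly that $\zeta>T_1+s_0$, which requires $\epsilon(\gamma,g)$ small enough that the perturbed $V$ cannot hit $a$ at all and, after $\tau_b^V$, stays above $m$ by margin at least $b-m=g/2(1+\gamma)^2$; your parenthetical ``$V$ stays far above $m$'' covers the late part of the window but not the moments just after $\tau_b^V$, where the buffer is only $O(1)$ in $\gamma,g$.
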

\begin{theorem} \label{thm:fluctuations} For each $(h, \nu) \in S$, $\prob_{(h, \nu)}$-a.s., 
\begin{eqnarray}
\frac{1}{\sqrt{2}\sqrt{1+\gamma}} \leq &\limsup_{t \to \infty} \frac{V_t}{\sqrt{\log t}}& \leq 2 \frac{1}{\sqrt{1+\gamma}} \nonumber \\
\frac{\gamma}{2g} \leq &\limsup_{t \to \infty} \frac{H_t}{\log t}& \leq 16 \frac{\gamma}{g}. \nonumber 
\end{eqnarray}
\end{theorem}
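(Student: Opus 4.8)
The plan is to read off Theorem~\ref{thm:fluctuations} from the i.i.d.\ renewal decomposition \eqref{eqn:renewal_sequence} combined with sharp tail estimates for the running maxima of $V$ and $H$ over a single excursion. For $j\ge0$ put $M^V_j=\sup_{t\in[\zeta_{j-1},\zeta_j)}V_t$ and $M^H_j=\sup_{t\in[\zeta_{j-1},\zeta_j)}H_t$ (with $\zeta_{-1}=0$); by \eqref{eqn:renewal_sequence} the triples $(\zeta_j-\zeta_{j-1},M^V_j,M^H_j)$, $j\ge0$, are i.i.d., and since $\zeta$ is integrable (Theorem~\ref{thm:renewal}) the strong law gives $\zeta_n/n\to m:=\expec_{\mathsmaller{\renewalpt}}\zeta\in(0,\infty)$ a.s., whence the counting function $N(t)=\#\{j\ge0:\zeta_j\le t\}$ obeys $N(t)/t\to1/m$ a.s. Starting from an arbitrary $(h,\nu)\in S$ rather than $\renewalpt$ only changes a single a.s.\ finite initial segment, irrelevant for the $\limsup$'s, so I argue from $\renewalpt$. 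The one substantial input, furnished by Section~\ref{flucinter}, is a pair of two-sided tail bounds for the per-excursion maxima, of the form: for all large $y,x$,
\[
c_1 e^{-2(1+\gamma)y^2}\le\prob_{\mathsmaller{\renewalpt}}\big(M^V_0>y\big)\le c_2 e^{-\frac{1+\gamma}{2}y^2},\qquad c_3 e^{-\frac{2g}{\gamma}x}\le\prob_{\mathsmaller{\renewalpt}}\big(M^H_0>x\big)\le c_4 e^{-\frac{g}{8\gamma}x},
\]
with $\gamma,g$-dependent constants (the additive shifts by $g/(1+\gamma)$ in Theorem~\ref{thm:tails_under_stationarity} are lower order and harmless).

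For the \emph{upper} bounds I use the first Borel--Cantelli lemma along $t_k=2^k$. Since $V_t\ge-g/\gamma$ and $H_t\ge0$, for $t\le t_{k+1}$ we have $V_t\le\max_{0\le j\le N(t_{k+1})}M^V_j$, and likewise for $H$. Splitting on $\{N(t_{k+1})\le2t_{k+1}/m\}$, whose complement has probability decaying exponentially in $t_{k+1}$ by Cram\'er's theorem for the left tail of $\zeta_n=\sum_{i\le n}(\zeta_i-\zeta_{i-1})$ (the negative exponential moments of $\zeta$ are finite as $\zeta\ge0$), a union bound gives
\[
\prob\Big(\sup_{t\le t_{k+1}}V_t>a\sqrt{\log t_k}\Big)\le\prob\big(N(t_{k+1})>2t_{k+1}/m\big)+\tfrac{2t_{k+1}}{m}\,\prob_{\mathsmaller{\renewalpt}}\big(M^V_0>a\sqrt{\log t_k}\big).
\]
With $a=2(1+\epsilon)/\sqrt{1+\gamma}$ the second term is at most $C\,t_{k+1}\,e^{-\frac{1+\gamma}{2}a^2\log t_k}=2C\,t_k^{\,1-2(1+\epsilon)^2}$, summable; Borel--Cantelli together with $\log t\ge\log t_k$ on $[t_k,t_{k+1}]$ gives $\limsup_t V_t/\sqrt{\log t}\le a$, and $\epsilon\downarrow0$ yields $2/\sqrt{1+\gamma}$. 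The gap bound is identical with $a\log t$ in place of $a\sqrt{\log t}$ and $a=16(1+\epsilon)\gamma/g$, the second term then being $C\,t_k^{\,1-2(1+\epsilon)}$.

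For the \emph{lower} bounds I use the independence in \eqref{eqn:renewal_sequence} via the second Borel--Cantelli lemma. With $a=(1-\epsilon)/(\sqrt2\sqrt{1+\gamma})$ the events $A_j=\{M^V_j>a\sqrt{\log j}\}$, $j\ge2$, are independent and $\prob(A_j)\ge c_1 e^{-2(1+\gamma)a^2\log j}=c_1 j^{-(1-\epsilon)^2}$, so $\sum_j\prob(A_j)=\infty$ and a.s.\ infinitely many $A_j$ occur. On the a.s.\ event $\{mj/2\le\zeta_j\le2mj$ eventually$\}$, for each such (large) $j$ the supremum $M^V_j$ is attained at some $t^\star_j\in[\zeta_{j-1},\zeta_j)$ with $\log t^\star_j=\log j+O(1)$, so $V_{t^\star_j}/\sqrt{\log t^\star_j}\ge a\sqrt{\log j}/\sqrt{\log j+O(1)}\to a$; hence $\limsup_t V_t/\sqrt{\log t}\ge a$, and $\epsilon\downarrow0$ gives $1/(\sqrt2\sqrt{1+\gamma})$. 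The gap lower bound is the same with $A_j=\{M^H_j>a\log j\}$, $a=(1-\epsilon)\gamma/(2g)$, where $\prob(A_j)\ge c_3 j^{-(1-\epsilon)}$.

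The real difficulty is thus \emph{not} the renewal/Borel--Cantelli assembly above but the per-excursion tail estimates of Section~\ref{flucinter}: one must obtain $\prob_{\mathsmaller{\renewalpt}}(M^V_0>y)$ and $\prob_{\mathsmaller{\renewalpt}}(M^H_0>x)$ with exponential rates of the correct order (in $(1+\gamma)y^2$, resp.\ $gx/\gamma$) and, crucially, with matching upper and lower rates up to a bounded constant factor --- this factor is exactly what determines the ratio between the two sides of Theorem~\ref{thm:fluctuations}. The upper estimates should follow by showing, through the Skorohod representation \eqref{sm} and the interior dynamics \eqref{eqn:system_starts_interior}--\eqref{eqn:hitting_time_velocity}, that driving $V$ (hence $L$, hence $H$) to a high level within one excursion forces the driving Brownian motion into an atypically large fluctuation of commensurate size; the lower estimates by exhibiting one explicit favorable Brownian scenario of the matching probability. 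One should also verify that the translation between ``occupation time above a level'' (which controls the stationary tails of Theorem~\ref{thm:tails_under_stationarity}) and ``excursion maximum above a level'' costs only a constant in the exponent, using that by \eqref{eqn:hitting_time_velocity} a velocity that reaches a high level stays above a fixed fraction of it for a time bounded away from $0$.
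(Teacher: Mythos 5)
Your proposal is correct and follows the same route as the paper's (very cursory) proof: use the i.i.d.\ renewal decomposition \eqref{eqn:renewal_sequence}, the per-excursion tail estimates of Theorems \ref{thm:velocity_bounds} and \ref{thm:gap_bounds}, the Borel--Cantelli lemmas (along a dyadic sequence for the upper bounds, using the independence of excursion maxima for the lower bounds), and $\zeta_n/n \to \expec_{\mathsmaller{\renewalpt}}\zeta$ to translate excursion indices into time. One small slip worth fixing: you quote the upper tail rates as $e^{-\frac{1+\gamma}{2}y^2}$ and $e^{-\frac{g}{8\gamma}x}$, whereas Theorems \ref{thm:velocity_bounds} and \ref{thm:gap_bounds} actually give the weaker rates $e^{-\frac{1+\gamma}{4}(y+g/(1+\gamma))^2}$ and $e^{-\frac{g}{16\gamma}x}$; fortuitously, your chosen thresholds $a=2(1+\epsilon)/\sqrt{1+\gamma}$ and $a=16(1+\epsilon)\gamma/g$ are exactly tuned to the correct (weaker) rates, so the Borel--Cantelli sums still converge and the stated conclusion follows, but the intermediate exponent arithmetic should be corrected to match.
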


\begin{remark}\label{rare}
Note that one cannot recover the analogous results in \cite{grav} (see Theorems 2.1 and 2.2 in \cite{grav}) by taking $\gamma \rightarrow 0$ in the results for the gap displayed in Theorems \ref{thm:tails_under_stationarity} and \ref{thm:fluctuations} above. This is because, when $\gamma=0$, the primary contribution to the tail estimates for the stationary gap distribution comes from rare events when a large upward climb of the Brownian particle leads to a large positive value for the velocity of the inert particle. During such events the inert particle moves up rapidly, and the Brownian particle cannot `keep up,' resulting in a large gap. 

In contrast when $\gamma>0$, the viscosity term $-\gamma V$ ensures the inert particle moves more `sluggishly' and cannot escape the Brownian particle. Thus, the large gaps arise when the Brownian particle escapes the inert particle by having a large downward fall. As we will show, the gap behaves like reflected Brownian motion with drift $-g/\gamma$ during such excursions. 
\end{remark}

The oscillation estimates obtained in Theorem \ref{thm:fluctuations} lead to a law of large numbers result for the trajectories $S_t$ and $X_t$.

\begin{theorem} \label{thm:lln} For each $(h, \nu) \in S$, $\prob_{(h, \nu)}$-a.s., 
\begin{equation*}
    \lim_{t \to \infty} \frac{S_t}{t} = \lim_{t \to \infty} \frac{X_t}{t} = -\frac{g}{1+\gamma}.
\end{equation*}
\end{theorem}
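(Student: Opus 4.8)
The plan is to reduce the statement to a law of large numbers for the time-average of the velocity. Since $X_t = S_t - H_t$ and Theorem~\ref{thm:fluctuations} gives $\limsup_{t\to\infty} H_t/\log t < \infty$ $\prob_{(h,\nu)}$-a.s., we have $H_t/t \to 0$ a.s., so it suffices to prove $S_t/t \to -g/(1+\gamma)$ a.s.; the statement for $X_t$ then follows. As $S_t = \int_0^t V_s\,ds$, I would compute the limit of $\frac1t\int_0^t V_s\,ds$ in two ways: pathwise, through the renewal decomposition \eqref{eqn:renewal_sequence}, and as $\int v\,\pi(dh\,dv)$, which I would then evaluate directly from the equations \eqref{eqn:system}.

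For the pathwise part, set $N(t) = \max\{n\ge 0 : \zeta_n \le t\}$. By Theorem~\ref{thm:renewal} the cycle length satisfies $\prob_{\mathsmaller{\renewalpt}}(\zeta > s) \le e^{-c\sqrt s}$ for all large $s$, so $\zeta$ has all moments and the elementary renewal theorem gives $N(t)/t \to 1/\expec_{\mathsmaller{\renewalpt}}(\zeta)$ a.s. Splitting
\[
\frac1t\int_0^t V_s\,ds = \frac1t\int_0^{\zeta_0}\! V_s\,ds \;+\; \frac{N(t)}{t}\cdot\frac{1}{N(t)}\sum_{j=0}^{N(t)-1}\int_{\zeta_j}^{\zeta_{j+1}}\! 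V_s\,ds \;+\; \frac1t\int_{\zeta_{N(t)}}^{t}\! V_s\,ds ,
\]
the first term is $O(1/t)$; the rewards $Y_j := \int_{\zeta_j}^{\zeta_{j+1}} V_s\,ds$ are i.i.d.\ by \eqref{eqn:renewal_sequence}, so the strong law of large numbers gives $\frac{1}{N(t)}\sum_{j<N(t)}Y_j \to \expec_{\mathsmaller{\renewalpt}}(Y_0)$ a.s.\ once $\expec|Y_0|<\infty$; and the last term is bounded in absolute value by $Z_{N(t)}/t$ with $Z_j := \int_{\zeta_j}^{\zeta_{j+1}}|V_s|\,ds$ i.i.d.\ of finite mean, hence vanishes since $\max_{j\le n}Z_j/n \to 0$ a.s.\ and $N(t)=O(t)$. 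Therefore
\[
\frac1t\int_0^t V_s\,ds \;\longrightarrow\; \frac{\expec_{\mathsmaller{\renewalpt}}\left(\int_0^\zeta V_s\,ds\right)}{\expec_{\mathsmaller{\renewalpt}}(\zeta)} \qquad \text{a.s.}
\]
Applying Theorem~\ref{thm:statmeas_renewal} to the bounded functions $(h,v)\mapsto(v\wedge n)\vee(-n)$ and letting $n\to\infty$, using $\expec_\pi|V|<\infty$ (from the Gaussian velocity tail in Theorem~\ref{thm:tails_under_stationarity}) and monotone convergence in the renewal representation, identifies this limit with $\int v\,\pi(dh\,dv) =: \expec_\pi(V)$.

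It remains to show $\expec_\pi(V) = -g/(1+\gamma)$, and here I would use \eqref{eqn:system} directly. Run the process with $(H_0,V_0)\sim\pi$ independent of $B$. By Theorem~\ref{thm:tails_under_stationarity} both $\expec_\pi|V|$ and $\expec_\pi(H)$ are finite, and rearranging the $V$-equation of \eqref{eqn:system} gives $L_t = V_t - V_0 + \int_0^t(\gamma V_s + g)\,ds$, so $\expec(L_t)<\infty$ by Tonelli. Taking expectations in the $H$-equation of \eqref{eqn:system}, using stationarity of $(H_t,V_t)$ and $\expec(B_t)=0$, yields $0 = t\,\expec_\pi(V) + \expec(L_t)$; taking expectations in the $V$-equation yields $0 = -t(\gamma\expec_\pi(V)+g) + \expec(L_t)$. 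Subtracting gives $(1+\gamma)\expec_\pi(V) + g = 0$, i.e.\ $\expec_\pi(V) = -g/(1+\gamma)$. Together with the previous paragraph this gives $S_t/t \to -g/(1+\gamma)$ a.s., and then $X_t/t = S_t/t - H_t/t \to -g/(1+\gamma)$ a.s.

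The main obstacle is the integrability $\expec_{\mathsmaller{\renewalpt}}|Y_0|<\infty$; since $V_s > -g/\gamma$ for all $s$, it reduces to $\expec_{\mathsmaller{\renewalpt}}\!\int_0^\zeta V_s^+\,ds < \infty$, which I would get from Cauchy--Schwarz as $\big(\expec[\zeta^2]\big)^{1/2}\big(\expec[(\sup_{s\le\zeta}V_s)^2]\big)^{1/2}$, with $\expec[\zeta^2]<\infty$ from Theorem~\ref{thm:renewal} and the second factor supplied by the cycle fluctuation bounds of Section~\ref{flucinter} (alternatively, $\sup_{s\le\zeta}V_s \le -g/(1+\gamma)+L_\zeta$ by \eqref{eqn:increase_on_boundary_only} and $L_\zeta \le \sup_{u\le\zeta}|B_u| + (g/\gamma)\zeta$ by \eqref{sm}, combined with a stopping-time tail estimate for $\sup_{u\le\zeta}|B_u|$). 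The other point needing care is the truncation argument that upgrades Theorem~\ref{thm:statmeas_renewal} from bounded test functions to $f(h,v)=v$.
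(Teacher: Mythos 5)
Your proof is correct but takes a genuinely different and substantially longer route than the paper's. The paper observes that subtracting the two equations of \eqref{eqn:system} cancels the local time: $H_t - V_t = H_0 - V_0 + (1+\gamma)S_t + gt - B_t$. Dividing by $t$ and invoking Theorem~\ref{thm:fluctuations} (so that $H_t/t \to 0$ and $V_t/t \to 0$ a.s., the latter also using $V_t > -g/\gamma$) together with $B_t/t \to 0$ a.s. immediately gives $S_t/t \to -g/(1+\gamma)$, and $X_t/t$ follows from $H_t/t \to 0$ again. No renewal-reward machinery, no stationary-mean computation, no integrability checks for cycle rewards are needed.

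Your approach instead passes through the renewal-reward theorem to identify $\frac{1}{t}\int_0^t V_s\,ds \to \expec_\pi(V)$ and then evaluates $\expec_\pi(V)$ by taking expectations in the stationary version of \eqref{eqn:system}; the latter step is itself the same local-time-cancellation trick as the paper's, but carried out in expectation rather than pathwise, and it requires the extra inputs $\expec_\pi(H) < \infty$, $\expec_\pi|V| < \infty$ (from Theorem~\ref{thm:tails_under_stationarity}) plus the cycle-reward integrability $\expec_{\mathsmaller{\renewalpt}}\int_0^\zeta V_s^+\,ds < \infty$ that you sketch via Cauchy--Schwarz. All those inputs are available and your argument closes, but it reproves under stationarity what the paper obtains pathwise in one line. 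One benefit of your route, though, is that it explicitly identifies $\expec_\pi(V) = -g/(1+\gamma)$, a fact the paper never states.
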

The next result shows that convergence to stationarity happens exponentially fast. Define the total variation norm $\| \cdot \|_{TV}$ for signed measures $\mu$ as $\|\mu\|_{TV} = \sup_{A \in \B(S)}|\mu|(A)$.
\begin{theorem} \label{thm:expo_ergodic} There exists a function $G:S \mapsto [1, \infty)$ and constants $D \in \left(0, \infty \right)$, $\lambda \in \left(0, 1 \right)$ such that for all $(h, \nu) \in S$ and $t \geq 0$,
$$\|P^t\left((h, \nu), \cdot\>\right) - \pi \|_{TV} \leq G(h, \nu)\, D \lambda^t.$$
\end{theorem}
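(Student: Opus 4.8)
The plan is to invoke the standard Harris-type criterion (as in \cite{meyntweedie,thor,ergodicity}) for geometric ergodicity of the time-continuous Markov process $(H_t, V_t)$, which requires two ingredients: a Foster--Lyapunov geometric drift condition \eqref{eqn:drift_cond}, and a minorization (``petite set'') condition \eqref{eqn:minorization_cond} on the sublevel sets of the Lyapunov function $G$. Since the transition kernels $P^t$ do not possess densities with respect to Lebesgue measure (the velocity evolves deterministically in $S^\circ$), the usual route via hypoellipticity is unavailable, so both ingredients must instead be extracted from the renewal structure of Subsection \ref{renst} and the hitting-time estimates of Section \ref{exun} and Appendix \ref{hittimeest}.

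First I would construct the Lyapunov function. The natural candidate is $G(h,\nu) = \expec_{(h,\nu)}\big[e^{\kappa \, \tau_{R}}\big]$ (or a product/sum of such exponential-moment functionals in $h$ and $\nu$ separately), where $\tau_R$ is the hitting time of a fixed compact ``center'' set $R \subset S$ containing a neighborhood of the renewal point $\renewalpt$, and $\kappa = \kappa(\gamma,g) > 0$ is a small constant. The point of Theorem \ref{thm:renewal}, together with the hitting-time bounds for $V$ in \eqref{eqn:hitting_time_velocity} and the domination of $H$ by Brownian motion with drift $-g/\gamma$ in \eqref{eqn:system_starts_interior}, is that such exponential moments are finite for every starting point and, crucially, satisfy $G \geq 1$ with $\expec_{(h,\nu)}\big[e^{\kappa t} G(H_t,V_t)\big]$ comparable to $G(h,\nu)$ off $R$ by the strong Markov property applied at time $t$; differentiating (or taking a discrete skeleton) yields the infinitesimal drift inequality $\mathcal{A} G \leq -\lambda_0 G + b\, \mathbbm{1}_R$ for suitable $\lambda_0 > 0$, $b < \infty$. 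Here I would need the stretched-exponential tail $\prob(\zeta > t^2) \le e^{-ct}$ to ensure $\kappa$ can be taken strictly positive; the quadratic time-scaling in Theorem \ref{thm:renewal} is exactly what makes $\expec[e^{\kappa \zeta}] < \infty$ for small $\kappa$.

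Second I would verify the minorization condition: there exist a probability measure $\nu_0$, a constant $\beta > 0$, and a time $t_* > 0$ such that $P^{t_*}\big((h,\nu), \cdot\big) \geq \beta\, \nu_0(\cdot)$ uniformly for $(h,\nu)$ in the center set $R$. Because $R$ can be taken to be a neighborhood of $\renewalpt$, this reduces to showing that, starting from any point of $R$, with probability bounded below the process is driven to the renewal point $\renewalpt$ and then, over a further fixed time window, spreads out so as to dominate a common sub-probability measure. The ``driving to $\renewalpt$'' step uses \eqref{eqn:increase_on_boundary_only} and the Skorohod representation \eqref{sm}: one forces the Brownian increment to make $H$ hit $0$ (raising $V$ to the target level via local time), then lets $V$ relax deterministically to $-g/(1+\gamma)$ while controlling $H$. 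The ``spreading'' step uses the genuine noise in the $H$-dynamics once at the boundary. This is where I expect the main obstacle to lie: establishing a quantitative lower bound on $P^{t_*}$ despite the degeneracy of the kernel requires carefully choosing an event (a tube around a deterministic-plus-Brownian path) on which the coupled $(H,V)$ lands in a prescribed open set with a density-like lower bound, and doing so with constants uniform over $R$. It is essentially a small-ball/support-theorem argument for the reflected degenerate diffusion, which must be done by hand rather than cited.

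Finally, with \eqref{eqn:drift_cond} and \eqref{eqn:minorization_cond} in hand — noting that a neighborhood of $\renewalpt$ serves simultaneously as the set $R$ in the drift condition and as a small set — I would apply the continuous-time geometric ergodicity theorem (e.g.\ Theorem 6.1 of \cite{meyntweedie} or the formulation in \cite{ergodicity}) to conclude $\|P^t((h,\nu),\cdot) - \pi\|_{TV} \leq G(h,\nu)\, D\, \lambda^t$ for constants $D \in (0,\infty)$, $\lambda \in (0,1)$ independent of the starting point, which is exactly the claimed bound. The uniqueness of $\pi$ appearing here is already guaranteed by Theorem \ref{thm:statmeas_renewal}, so this step is purely a matter of checking hypotheses and quoting the theorem.
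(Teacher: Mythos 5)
Your overall framework—Foster--Lyapunov drift plus a minorization on a small set, feeding into a continuous-time Harris-type theorem—is exactly the framework the paper uses, and your candidate Lyapunov function (an exponential moment of a hitting time of a compact ``center'' set) is essentially the one adopted in Lemma \ref{lemma:lyapunov_exponential_moment}, where $F(h,\nu)$ controls $\expec_{(h,\nu)} e^{\eta\tau_\Lambda(1)}$. However, two points deserve attention.

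First, your claim that ``the quadratic time-scaling in Theorem \ref{thm:renewal} is exactly what makes $\expec[e^{\kappa\zeta}]<\infty$ for small $\kappa$'' is incorrect. The bound $\prob(\zeta > t^2)\leq e^{-ct}$ is equivalent to $\prob(\zeta > s)\leq e^{-c\sqrt{s}}$, a stretched-exponential tail that does \emph{not} imply finiteness of any exponential moment of $\zeta$: for every $\kappa>0$ one has $e^{\kappa s - c\sqrt{s}}\to\infty$. Fortunately, this remark is also tangential to your own construction, since your Lyapunov function uses $\tau_R$ rather than $\zeta$; hitting a box $R$ (or $\Lambda$) is far easier than returning exactly to $\renewalpt$, and the paper proves finite exponential moments of $\tau_\Lambda(1)$ directly in Lemma \ref{lemma:lyapunov_exponential_moment} by a region-by-region analysis using \eqref{eqn:system_starts_interior}, \eqref{eqn:increase_on_boundary_only}, and exponential moments of running suprema of drifted Brownian motion—not by invoking Theorem \ref{thm:renewal}. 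You would need to carry out an analogous direct estimate; Theorem \ref{thm:renewal} cannot substitute for it.

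Second, and more substantially, your route to the minorization condition is genuinely different from the paper's. You propose a ``by hand'' small-ball / support-theorem lower bound for the degenerate reflected diffusion, which you correctly identify as the main obstacle. The paper avoids this entirely: it first proves ordinary total-variation ergodicity (Theorem \ref{thm:ergodic}) from the renewal structure, using the fact that the inter-renewal time is \emph{spread out} (Lemma \ref{lemma:spread_out}, a one-dimensional Girsanov argument for the law of $\zeta$, much easier than a support theorem for the full $(H,V)$-dynamics). Then, in Lemma \ref{lemma:structural}, it shows $\Lambda$ is a \emph{petite} set via the resolvent kernel $R((h,\nu),\cdot)=\int_0^\infty e^{-t}P^t((h,\nu),\cdot)\,dt$, by decomposing the time integral along renewal epochs—again exploiting the regenerative structure rather than pathwise small-ball estimates. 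Finally, it cites Proposition 6.1 of \cite{stability} to upgrade ergodicity plus a petite set plus positive Harris recurrence to the skeleton minorization (Lemma \ref{lemma:minorization}). Your direct approach could in principle be made to work, but it requires a delicate tube argument around a deterministic-plus-Brownian path, with uniform constants over $R$, in a situation where the process lacks densities and hypoellipticity; the paper's resolvent/renewal detour is strictly cleaner and sidesteps every one of those difficulties.
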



\section{Existence and uniqueness of the process}\label{exun}

We show the existence of a pathwise unique strong solution to equations \eqref{eqn:system} that is also a strong Markov process. The results proved here are for systems slightly more general than our current model. Consider the system of equations for $U = (U^1, U^2)$, with $U_0 = u = (u^1, u^2) \in \Real_+ \times \Real$ fixed,
\begin{align} \label{eqn:system2}
    dU^1_t &= \varphi(U_t)\,dt - dB_t + dL^{u}_t \\
    dU^2_t &= \phi(U_t) dt + dL^{u}_t, \nonumber 
\end{align}
where $L^{u}_t$ is a continuous, non-negative, non-decreasing process such that $\int_0^t \indi{U^1_s = 0}dL^{u}_s = L^{u}_t$ and $U^1_t \geq 0$ for all $t$. A solution to \eqref{eqn:system2} is therefore in $\C(\Real_+, S)$, the space of continuous functions on $\Real_+$ taking values in $S = \Real_+ \times \Real$. We write $\partial S = \{(u^1, u^2)\,: \,\, u^1 = 0 \}$ .

There has been substantial previous work in this direction. \cite{ikedawatanabe} establishes the existence of weak solutions and uniqueness in law for very general reflected systems. Existence and uniqueness results for inert drift systems have been addressed in \cite{knight} and \cite{white} for models where the velocity is proportional to the local time and consequently is an increasing process. \cite{bass} deals with weak existence and stationarity of inert drift systems on bounded domains. Our model differs qualitatively from those works, and we prove the existence of a pathwise unique strong solution to equations \eqref{eqn:system} that is also a strong Markov process. Under regularity conditions imposed on the drift $(\varphi,\phi)$, our proof adapts the standard procedure for existence and uniqueness of stochastic differential equations without reflection, with appropriate modifications to incorporate the reflection term.

We use the standard norm on $\C(\Real_+, S)$, the continuous functions on the half-line taking values in $\Real^d$, is given by $|f| = \sum_{n=1}^\infty \frac{1\wedge|f|_n}{2^n}$, where $|\cdot|_n$ is the supremum norm on  $\C([0, n], S)$. Also recall that a transition semigroup (Definition 6.1 in \cite{legall}) is a real-valued map $(t, u, f) \mapsto P^t(u, f)$ for $t \geq 0$, $u \in S$ and $f$ bounded, measurable real-valued functions on $S$ such that $A \mapsto P^t(u, \indi{A})$ is a probability measure on the Borel $\sigma$-field of $S$ for each $t$ and $u$, $\left(P^t \circ P^s\right)(u, f) = P^{t+s}(u, f)$ for each $s, t \geq 0$, $P^0(u, f) = f(u)$ for all $u$ and $(t, u) \mapsto P^t(u, f)$ is measurable for each such $f$. 

\begin{theorem}\label{thm:existence} Assume $\phi, \varphi$ are globally Lipschitz. Then for each initial condition $u = (u^1, u^2) \in S$ there exists a pathwise unique, strong solution to equations \eqref{eqn:system2} in $\C(\Real_+, S)$. The solution is a strong Markov process, and we have the Skorohod representation $$L^{u}_t = \underset{\ell\leq t}{\sup}\left(-u^1 + B_\ell - \int_0^\ell \varphi(U_s) \, ds\right)\vee 0.$$

Write $U(u)$ for the solution with $U_0 = u \in S$. The solution $U$ may be chosen such that $u \mapsto U(u)$ is continuous in the topology of $\C(\Real_+, S)$. $P^t\left(u, f\right) = \expec_u\left(f\left(U_t\right) \right)$, where $f:S \mapsto \Real$ is bounded and measurable, defines a transition semigroup.
\end{theorem}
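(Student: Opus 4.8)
The approach is to recast \eqref{eqn:system2} as a fixed-point equation driven by the one-dimensional Skorohod map at the origin and to solve it by Picard iteration. Recall that for every $w \in \C(\Real_+, \Real)$ with $w_0 \geq 0$ there is a unique pair $(\Gamma(w), \Lambda(w))$ with $\Lambda(w)$ continuous, nondecreasing, $\Lambda(w)_0 = 0$, $\Gamma(w) := w + \Lambda(w) \geq 0$ and $\int_0^\cdot \indi{\Gamma(w)_s > 0}\, d\Lambda(w)_s = 0$; moreover $\Lambda(w)_t = \sup_{s \leq t}(-w_s) \vee 0$, both $\Gamma$ and $\Lambda$ are Lipschitz for the supremum norm on every $[0,T]$ (with constants $2$ and $1$), and both are causal, meaning $\Gamma(w)_t$ and $\Lambda(w)_t$ depend on $w$ only through $w|_{[0,t]}$. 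Given a candidate $U \in \C(\Real_+, S)$, set $w^U_t := u^1 + \int_0^t \varphi(U_s)\, ds - B_t$ and define $\Psi(U)$ by $\Psi(U)^1 := \Gamma(w^U)$ and $\Psi(U)^2_t := u^2 + \int_0^t \phi(U_s)\, ds + \Lambda(w^U)_t$; since $\Gamma(w^U) \geq 0$, the path $\Psi(U)$ again takes values in $S$. A path $U$ solves \eqref{eqn:system2} with $L^u := \Lambda(w^U)$ exactly when $U$ is a fixed point of $\Psi$, in which case $L^u$ is precisely the local time appearing in the statement, with the claimed Skorohod representation built in.

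For local existence and uniqueness, combining the Lipschitz bounds on $\Gamma$ and $\Lambda$ with the global Lipschitz hypothesis on $\varphi$ and $\phi$ gives a constant $\kappa$, depending only on the Lipschitz constants of $\varphi$ and $\phi$, such that $\sup_{s \leq t}|\Psi(U)_s - \Psi(\tilde U)_s| \leq \kappa \int_0^t \sup_{r \leq s}|U_r - \tilde U_r|\, ds$ for all $t$ and all $U, \tilde U \in \C(\Real_+, S)$. Hence $\Psi$ is a contraction on $\C([0,T], S)$ in the supremum norm whenever $\kappa T < 1$, and Banach's fixed-point theorem provides a unique solution on $[0,T]$; inserting the same inequality into Gronwall's lemma upgrades this to pathwise uniqueness on every finite interval. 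Because $T$ may be chosen to depend only on $\kappa$, one obtains a global solution either by concatenating local solutions — checking, via causality of $\Gamma$ and $\Lambda$, that restarting the Skorohod problem from $U^1_T$ reproduces the increments of $L^u$ on $[T, 2T]$ so that the concatenated path still solves \eqref{eqn:system2} — or by running the iteration on a fixed $\C([0,T], S)$ with the weighted norm $\sup_{t \leq T} e^{-\beta t}|f_t|$, which is a contraction for $\beta$ large. Since each Picard iterate is a measurable functional of $B$ (composed from Lebesgue integration, the Skorohod maps, and evaluation), the limiting $U$ is adapted to the augmented Brownian filtration, so it is a strong solution, and $L^u = \Lambda(w^U)$ is the asserted Skorohod representation.

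Continuity of $u \mapsto U(u)$ comes from the same estimate applied to two initial conditions, the dependence on $u$ entering $w^U$ only through the initial value $u^1$ and, for the second coordinate, the additive constant $u^2$; Gronwall's lemma then yields $\sup_{s \leq n}|U(u)_s - U(\tilde u)_s| \leq C_n |u - \tilde u|$ for every $n$. Fixing a single version of the solution simultaneously for all $u$ in a countable dense subset of $S$ on one full-probability event and extending by uniform continuity produces a version for which $u \mapsto U(u)$ is continuous into $\C(\Real_+, S)$. The strong Markov property is then the standard consequence of pathwise uniqueness and the resulting flow property: writing $U_t(u) = F_t(u, B)$ for the measurable solution map, uniqueness gives $U_{t+s}(u) = F_s\bigl(U_t(u),\, \theta_t B\bigr)$ with $\theta_t B := B_{t + \cdot} - B_t$; applied at a stopping time $\tau$, where $\theta_\tau B$ is a Brownian motion independent of $\F_\tau$ by the strong Markov property of $B$, this reads $U_{\tau + s} = F_s(U_\tau, \theta_\tau B)$, which is exactly the strong Markov property. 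Finally $P^t(u, f) := \expec_u f(U_t)$ is a transition semigroup: $A \mapsto P^t(u, \indi{A})$ is a probability measure because $U_t(u)$ is $S$-valued; $P^0(u, f) = f(u)$ since $U_0(u) = u$; the semigroup identity is the Chapman--Kolmogorov relation for the (strong) Markov process; and joint measurability of $(t, u) \mapsto P^t(u, f)$ follows from the joint measurability of $(t, u, \omega) \mapsto U_t(u, \omega)$ — continuity in $t$ from continuity of paths, and the continuous-in-$u$ version just constructed — together with Fubini's theorem.

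The contraction estimate itself is routine once the Lipschitz and causality properties of the one-dimensional Skorohod map are recorded, so I do not expect that to be the difficulty. The main obstacle is the bookkeeping surrounding those properties: verifying that concatenating Skorohod-reflected solutions again solves the reflected system (the flow property of the reflected dynamics, needed both for global existence and for the strong Markov property) and handling null sets so that one version of the solution is continuous in $u$ simultaneously across all starting points. These are standard manipulations, but they require care here precisely because the local time $L^u$ is shared by the two coordinates, so the reflection couples the evolution of $U^1$ and $U^2$.
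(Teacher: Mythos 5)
Your proposal is correct, but it takes a genuinely different route from the paper. The paper localizes with stopping times $T_N$, invokes Theorem 7.2, Ch. IV.7 of Ikeda--Watanabe for weak existence of the stopped system (hence the localization, to meet the boundedness hypothesis), proves pathwise uniqueness by a Gronwall argument in second moment, and then passes through the Yamada--Watanabe machinery to obtain a strong solution; continuity in the initial condition is then obtained from a moment estimate together with Kolmogorov's continuity lemma, following Le Gall's Theorem 8.5. You instead observe that the one-dimensional Skorohod map $(\Gamma,\Lambda)$ at the origin is globally Lipschitz and causal, recast \eqref{eqn:system2} as a fixed-point equation $U=\Psi(U)$ for a map built from Lebesgue integration and $(\Gamma,\Lambda)$, and apply Banach's fixed-point theorem (with a weighted sup norm, or by concatenating local solutions). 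This bypasses weak existence, Yamada--Watanabe, and localization entirely: the Picard iterates are explicit measurable functionals of $B$, so the limit is a strong solution by construction, and because the diffusion coefficient is constant the contraction estimate is pathwise, giving the almost-sure Lipschitz bound $\sup_{s\le n}|U(u)_s-U(\tilde u)_s|\le C_n|u-\tilde u|$ directly rather than in mean, which makes the continuity-in-$u$ argument cleaner than the paper's Kolmogorov-lemma route. The strong Markov property you deduce from the flow property of the solution map and the strong Markov property of $B$, whereas the paper passes to the limit through the stopped processes $U^N$; both arguments are standard. What your approach buys is self-containedness and a sharper exploitation of the special structure (one-dimensional reflection, additive noise); what the paper's approach buys is closer adherence to textbook SDE theory that would survive, say, a state-dependent diffusion coefficient. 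The concern you flag about concatenating reflected solutions is real but is resolved either by the causality of the Skorohod map or, more simply, by the weighted-norm global contraction. One small tightening: the joint measurability of $(t,u)\mapsto P^t(u,f)$ for bounded measurable $f$ needs a monotone class step after you obtain joint continuity for bounded continuous $f$; invoking Fubini alone does not suffice, though the gap is entirely routine.
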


\begin{proof}
We suppress the notation for initial conditions when the distinction is unnecessary. Define the stopping time and stopped processes
\begin{equation}\label{eqn:localisation}
    T_N = \inf\left\{t \geq 0\,:\,\, |U_t| > N \right\}, \quad \quad  U^N_t = U_{t\wedge T_N}, \quad \quad \> N \geq 1.
\end{equation}
The drift vector and diffusion matrices of the stopped system satisfy the boundedness and Lipschitz conditions required in Theorem 7.2, Chapter IV.7 of \cite{ikedawatanabe}, which shows that for each $N \geq 1$ there exists a weak solution $U^N$ to \eqref{eqn:system2} in $\C(\Real_+, S)$, which has the strong Markov property and is unique in law. Skorohod's lemma (Lemma 6.14 \cite{kar}) gives the representation for $L^u$ stated in the theorem.

Uniqueness is in fact pathwise: Take $U^N, (U')^N$ to be two weak solutions, defining $T'_N$ analogously to \eqref{eqn:localisation}. We can assume that $U^N, (U')^N$ exist on a single filtered probability space and that the Brownian motions corresponding to the solutions are the same. Such a construction relies on regular conditional probabilities as described in \cite{kar} Chapter 5D.

We use Gronwall's lemma in the typical way to show pathwise uniqueness. Set $\varU_t = U_{t\wedge T_N \wedge T'_N} - U'_{t\wedge T_N \wedge T'_N} $, suppressing the superscript $N$ on $\varU$. Denote the local times corresponding to $U^N, (U')^N$ as $L, L'$. Fixing $T > 0$, we calculate for all $t \in [0, T]$
\begin{multline} \label{eqn:gronwall_bound_uniqueness}
    \expec\left(|\varU_t|^2 \right) \leq \expec\left(\underset{s \leq t}{\sup} \left|\varU_s\right|^2 \right) \leq K\left(\expec\>\underset{s \leq t}{\sup}\left(L_{s\wedge T_N \wedge T'_N} - L'_{s\wedge T_N \wedge T'_N} \right)^2\right) \\
    + K \> \expec\left(\underset{s \leq t}{\sup}\left|\int_0^{s\wedge T_N \wedge T'_N} \varphi(U^N_\ell) - \varphi((U')^N_\ell) \, d\ell \right|^2 + \underset{s \leq t}{\sup}\left|\int_0^{s\wedge T_N \wedge T'_N} \phi(U^N_\ell) - \phi((U')^N_\ell) \, d\ell \right|^2\right) \\
    \leq K' \> \expec\left(\underset{s \leq t}{\sup}\left|\int_0^{s\wedge T_N \wedge T'_N} \varphi(U^N_\ell) - \varphi((U')^N_\ell) \, d\ell \right|^2 + \underset{s \leq t}{\sup}\left|\int_0^{s\wedge T_N \wedge T'_N} \phi(U^N_\ell) - \phi((U')^N_\ell) \, d\ell \right|^2\right)\\
    \leq K'' T\>\expec \left(\int_0^t |\varU_s|^2 \, ds \right).
\end{multline}
$K, K', K''>0$ are constants not depending on $T$, $u$ or $N$. The third inequality follows from the explicit form of $L$ furnished by Skorohod's lemma, and the last from using Jensen's inequality and the Lipschitz property. By Gronwall's lemma, $\varU_t = 0$ on $[0, T]$. Since $T$ was arbitrary, $U_{t\wedge T_N \wedge T'_N} = U'_{t\wedge T_N \wedge T'_N} $ for all $t$ and each $N$. Calculations almost identical to \eqref{eqn:gronwall_bound_uniqueness} with $U^N$ and $(U')^N$ in place of $\tilde{U}$, along with Gronwall's lemma again, imply $\lim_{N \to \infty}\> T_N = \infty$ and the same for $T'_N$. If $N_1 \leq N_2$, then $U_t^{N_1} = U_t^{N_2}$ for $t \leq T_{N_1}$, and we define a continuous process $U$ for all time $t$ such that $U_t = U^N_t$ for any $N$ such that $t \leq T_N$. Similarly define $U'$ for all time. Taking $N \to \infty$, we have $U_t = U'_t$ for all $t$, i.e. pathwise uniqueness holds.

Arguments based on regular conditional distributions used to prove the Yamada-Watanabe Theorem, and its Corollary 3.23, Chapter 5D of \cite{kar}, remain valid in the setting of Theorem \ref{thm:existence} once weak existence and pathwise uniqueness are shown. Therefore $U$ is the unique strong solution to \eqref{eqn:system2}. 

To show $u \mapsto U(u)$ is continuous, we can follow almost exactly the argument as in the proof of Theorem 8.5 in \cite{legall}. A minor change is needed because of the term $L^{u}_t$: consider the solutions $U(u)$ and $U(\overset{\sim}{u})$ starting from distinct points $u$ and $\overset{\sim}{u}$. It suffices to show, 
\begin{equation} \label{eqn:gronwall_bound_cont}
    \expec\left(\underset{s \leq t}{\sup} \left|U^N(u)_s - U^N(\overset{\sim}{u})_s  \right|^2 \right) \leq K\left(|u - \overset{\sim}{u}|^2 + T^2 \int_0^t \expec\left(\left|U^N(u)_s - U^N(\overset{\sim}{u})_s  \right|^2 \right) \, ds\right) 
\end{equation}
for any $t \in [0, T]$, for all $T > 0$ and $N \geq 1$. Then, as in Theorem 8.5 of \cite{legall}, continuity of $u \mapsto U(u)$ follows by sending $N\to \infty$ and using \eqref{eqn:gronwall_bound_cont} to apply Gronwall's and Kolmogorov's continuity lemmas (Theorem 2.9 of \cite{legall}). However, \eqref{eqn:gronwall_bound_cont} follows exactly as in the proof of \eqref{eqn:gronwall_bound_uniqueness} by replacing $L, L'$ with $L^u, L^{\overset{\sim}{u}}$ and $\varU$ with $U^N(u) - U^N(\overset{\sim}{u})$ for any $u, \overset{\sim}{u} \in S$. The strong Markov property for $U$ holds since it does for $U^N$, by the calculation 
$$\indi{T_N > \tau + t} \expec\left( f\left(U_{\tau + t}(u)\right) \> \big| \> \F_{\tau}\right) = \indi{T_N > \tau + t} \expec\left( f\left(U_{\tau\wedge T_N + t}(u)\right) \> \big| \> \F_{\tau\wedge T_N}\right) = \indi{T_N > \tau + t} P^t\left(U_{\tau\wedge T_N}, f \right),$$
for any finite stopping time $\tau$ and $t > 0$, and any bounded continuous $f$. As $N \to \infty$, $\indi{T_N > \tau + t} \to 1$ and the right-hand side tends to $P^t\left(U_{\tau}, f \right)$ by continuity. To check that $P^t(u, f)$ defines a transition semigroup, it remains only to show $t \mapsto P^t(u, f)$ is measurable for every $u$ and bounded measurable $f$. However, the Dominated Convergence Theorem implies $t \mapsto P^t(u, f)$ is continuous for every bounded continuous $f$, and the proof is completed by a standard Monotone Class Theorem argument.
\end{proof}

\section{Existence, uniqueness and representation of the stationary measure}\label{strep}

This section is devoted to proving Theorems \ref{thm:renewal} and \ref{thm:statmeas_renewal}. We consider two cases. The case when the renewal point is approached from below, that is when the velocity hits the level $a< -g/(1+\gamma)$ before $b>-g/(1+\gamma)$, is considered in Subsection \ref{sec:renewal_from_below}. Note that in this case, the first hitting time of the level $-g/(1+\gamma)$ by the velocity after hitting $a$ corresponds to the renewal time, by \eqref{eqn:increase_on_boundary_only}. The other case when $b$ is hit before $a$ is considered in Subsection \ref{sec:renewal_from_above}. In this case, the velocity can hit level $-g/(1+\gamma)$ after hitting $b$ without the gap being zero at this hitting time. Thus, the velocity can fall below $-g/(1+\gamma)$ after hitting $b$, then approach $-g/(1+\gamma)$ from below for the renewal time to be attained. Integrability of the renewal time is proven by splitting the path into excursions between carefully chosen stopping times and estimating the duration of each such excursion.

\subsection{Renewal point approached from below} \label{sec:renewal_from_below}

Fix $a = -\left(g + g/2\gamma \right)/(1+\gamma) \in \left(-g/\gamma, -g/(1+\gamma) \right)$, and $b = -\left(g - g/2(1+\gamma) \right)/(1+\gamma) \in \left(-g/(1+\gamma),  0\right).$ Define $\alpha_{-1} = 0$ and $\alpha_0 =  \tau_a^V \wedge \tau^V_b$. If $\tau_b^V < \tau_a^V$, define $\alpha_j = \alpha_0$ for all $k \ge 0$ and $N^{-} = 0$. For $k \ge 0$, if $V_{\alpha_{3k}} = a$, define
\begin{eqnarray} \label{eqn:def_stop_times_below}
\alpha_{3k + 1} &=& \sigma(\alpha_{3k}) \nonumber \\
\alpha_{3k + 2} &=& \inf\left\{t \geq \alpha_{3k + 1} \,|\,\, V_t = -\left(g + g/4\gamma \right)/(1+\gamma)\right\} \nonumber \\
\alpha_{3k + 3} &=& \inf\left\{t \geq \alpha_{3k + 2} \,|\,\, V_t = -g/(1+\gamma) \quad \text{or} \quad a\right\} \nonumber\\
N^- &=& \inf\left\{k \geq 1 \, | \,\, V_{\alpha_{3k}} = -g/(1+\gamma) \right\}.
\end{eqnarray}
If $V_{\alpha_{3k}} = -g/(1+\gamma)$, then $\alpha_j = \alpha_{3k} \ \forall \, j \geq 3k$. Unless otherwise stated, we assume $\left(H_0, V_0\right) = \renewalpt$.  Lemma \ref{lemma:vel_escape_interval} says $\tau_a^V\wedge \tau_b^V < \infty $ a.s. with respect to the measure $\prob_{\mathsmaller{\renewalpt}}$. Lemma \ref{lemma:mgf_hitting_boundary} shows $ \alpha_1 < \infty$, and the proof of Lemma \ref{lemma:renewal_above_time_diffs} applied successively to $\alpha_2 - \alpha_1$, $\alpha_3 - \alpha_2$ etc. will show the remaining $\alpha_j$ are finite as well.

\begin{lemma} \label{lemma:stop_renewal_from_below} For all $k\geq 0$,
$$\prob_{(0, -\frac{g}{1+\gamma})}\left(N^- > k\right) \leq  \left( 1 - \frac{1}{\sqrt{2\pi}}\frac{\frac{g}{\sqrt{2\gamma}(1+\gamma)}}{\left(\frac{g}{\sqrt{2\gamma}(1+\gamma)}\right)^2 + 1}\, \exp\left\{-\frac{g^2}{4\gamma(1+\gamma)^2}\right\}\right)^k .$$
\end{lemma}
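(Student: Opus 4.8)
The plan is to reduce, via the strong Markov property, to a single ``round'' estimate, and then to prove that estimate by comparing the velocity with the driving Brownian motion, exploiting the fact that $H_t-V_t$ carries no local-time term.

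\textbf{Reduction.} Write $m:=-g/(1+\gamma)$ (the velocity coordinate of the renewal point), $v_2:=-(g+g/4\gamma)/(1+\gamma)$ (the level occurring in the definition of $\alpha_{3k+2}$), and $\Delta:=g/(4\gamma(1+\gamma))$, so that $a<v_2<m$ with $m-v_2=v_2-a=\Delta$. In the continuation case $V_{\alpha_{3k+1}}\le a<v_2$, so by \eqref{eqn:increase_on_boundary_only} (applied at the time $\alpha_{3k+1}$, together with the remark that these properties persist after restarting) the first time after $\alpha_{3k+1}$ at which $V$ equals $v_2$ — that is, $\alpha_{3k+2}$ — occurs with $H=0$; hence the state at $\alpha_{3k+2}$ is exactly $(0,v_2)$. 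On $\{V_{\alpha_0}=b\}$ we have $N^-=0$ and there is nothing to prove, while on $\{V_{\alpha_0}=a\}$ the strong Markov property shows that, conditionally on not yet having stopped, each round independently ``fails'' (meaning $V_{\alpha_{3k}}=a$ rather than $m$) with the same probability $1-q$, where $q:=\prob_{(0,v_2)}(\tau_m^V<\tau_a^V)$. Therefore $\prob(N^->k)=\prob(V_{\alpha_0}=a)\,(1-q)^k\le(1-q)^k$, and it remains only to prove $q\ge p$.

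\textbf{The one-round estimate.} Subtracting the two equations of \eqref{eqn:system} cancels $dL_t$ and gives $d(H_t-V_t)=\big((1+\gamma)V_t+g\big)\,dt-dB_t$. Starting from $(0,v_2)$ this integrates to $V_t=H_t+v_2+B_t-\int_0^t\big((1+\gamma)V_s+g\big)\,ds$; since $H_t\ge0$ and $(1+\gamma)V_s+g\le0$ whenever $V_s\le m$, we obtain $V_t\ge v_2+B_t$ for all $t\le\tau_m^V$. Secondly, from $dV_t=-(\gamma V_t+g)\,dt+dL_t$ with $L$ nondecreasing and $0<\gamma V_s+g\le\gamma m+g=g/(1+\gamma)$ for $V_s\le m$, we get $V_t\ge v_2-\tfrac{g}{1+\gamma}t$ for $t\le\tau_m^V$. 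The first bound forces $\tau_m^V\le\tau_\Delta^B<\infty$ a.s. (otherwise $V_{\tau_\Delta^B}\ge v_2+\Delta=m$, contradicting $\tau_m^V>\tau_\Delta^B$), and on $\{B_{s_0}\ge\Delta\}$ it gives $\tau_m^V\le s_0$. Choosing $s_0=1/(8\gamma)<1/(4\gamma)$, the second bound then yields $V_t\ge v_2-\tfrac{g}{1+\gamma}t>v_2-\Delta=a$ for every $t<\tau_m^V\le s_0$, so $V$ reaches $m$ before ever touching $a$; thus $\{B_{s_0}\ge\Delta\}\subseteq\{\tau_m^V<\tau_a^V\}$. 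Since $B_{s_0}\sim N(0,1/(8\gamma))$ and $\Delta\sqrt{8\gamma}=g/(\sqrt{2\gamma}(1+\gamma))=:\beta$, the Mills-ratio bound $\prob(Z\ge\beta)\ge\frac{1}{\sqrt{2\pi}}\frac{\beta}{\beta^2+1}e^{-\beta^2/2}$ for standard normal $Z$ gives $q\ge\prob(Z\ge\beta)\ge p$, completing the proof. (In fact the first bound alone already shows $\{\tau_\Delta^B<\tau_{-\Delta}^B\}\subseteq\{\tau_m^V<\tau_a^V\}$, so $q\ge\tfrac12$; the weaker explicit estimate above is all that is needed here.)

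\textbf{Main obstacle.} The delicate point is the choice of the Brownian event in the last step: it must simultaneously drive $V$ up to $m$ within time $1/(4\gamma)$ and preclude $V$ from falling to $a$ in the meantime, and the two a priori lower bounds on $V$ (one in terms of $B_t$, one in terms of $t$) are tailored precisely to these two requirements, with $s_0=1/(8\gamma)$ producing the constant $\beta$ appearing in the statement. Identifying the restart state as $(0,v_2)$ via \eqref{eqn:increase_on_boundary_only} is the other point requiring a little care, but it is routine once the monotonicity of the velocity on and off the boundary is in hand.
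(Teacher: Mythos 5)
Your proof is correct and takes essentially the same route as the paper: both reduce, via the strong Markov property, to a single-round estimate from the reset state $(0, v_2)$ with $v_2 = -(g + g/4\gamma)/(1+\gamma)$, and both bound the failure probability by showing that the Brownian event $\{B_{1/(8\gamma)} \ge \Delta\}$ with $\Delta = g/(4\gamma(1+\gamma))$ forces $V$ to hit $-g/(1+\gamma)$ before $a$, finishing with the same Mills-ratio lower bound. The only cosmetic difference is in how the comparison with Brownian motion is obtained: you derive the two a priori inequalities $V_t \ge v_2 + B_t$ (by differencing $H - V$ to eliminate $dL$) and $V_t \ge v_2 - \tfrac{g}{1+\gamma}t$ (from the ODE for $V$ alone), whereas the paper packages both into a single comparison process $Y_t = v_2 - t\tfrac{g}{1+\gamma} + \sup_{s\le t}(B_s + s\tfrac{g}{1+\gamma})$ built from the Skorohod formula for $L_t$; either way the argument collapses to the same event and the same numerical constant.
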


\begin{proof}
Write $\epsilon = g/(2\gamma)$. For $\left(H_0, V_0\right) = \left(0, -\frac{g + \epsilon/2}{1+\gamma}\right)$ and $t < \tau_{- \frac{g + \epsilon}{1+\gamma}}^V\wedge \tau_{- \frac{g}{1+\gamma}}^V$, define
$$
Y_t := -\frac{g+\epsilon/2}{1+\gamma} - t\frac{g}{1+\gamma} + \underset{s \leq t}{\sup}\left(B_s + s \frac{g}{1+\gamma} \right).
$$
As $L_t \ge \underset{s \leq t}{\sup}\left(B_s + s \frac{g}{1+\gamma} \right)$ and $V_t \le -g/(1+ \gamma)$, therefore
$$
V_t = V_0 -\gamma \int_0^tV_udu - gt + L_t \ge Y_t
$$
for all $t < \tau_{- \frac{g + \epsilon}{1+\gamma}}^V\wedge \tau_{- \frac{g}{1+\gamma}}^V$.
Note that $Y_t$ cannot hit $- \frac{g + \epsilon}{1+\gamma}$ before time $\frac{\epsilon}{2g}$. If  $\underset{s \leq \epsilon/(4g)}{\sup}\left(B_s + s \frac{g}{1+\gamma} \right)\ge \frac{3}{4} \frac{\epsilon}{1+\gamma}$, then $Y_t$ and thus $V_t$ must have hit $- \frac{g}{1+\gamma}$ by time $\epsilon/(4g)$. Thus,
\begin{multline*}
\prob_{(0, -\frac{g}{1+\gamma})}\left(N^- > 1 \right) \le \prob_{\left(0, -\frac{g + \epsilon/2}{1+\gamma}\right)}\left( \tau_{- \frac{g + \epsilon}{1+\gamma}}^V < \tau_{- \frac{g}{1+\gamma}}^V\right)
\le \prob\left(Y_t \text{ hits } -\frac{g + \epsilon}{1+\gamma} \text{ before } -\frac{g}{1+\gamma} \right)
\\ \le \prob\left(\underset{s \leq \frac{\epsilon}{4g}}{\sup}\left(B_s + s \frac{g}{1+\gamma}\right) < \frac{3}{4}\frac{\epsilon}{1+\gamma}\right) \le \prob\left(B_{\epsilon / 4g} < \frac{1}{2}\frac{\epsilon}{1+\gamma} \right) \leq 1 - \frac{1}{\sqrt{2\pi}}\frac{\frac{\sqrt{\epsilon g}}{1+\gamma}}{\left(\frac{\sqrt{\epsilon g}}{1+\gamma}\right)^2 + 1}\, e^{-\frac{g \epsilon}{2(1+\gamma)^2}},
\end{multline*}
where the first inequality follows from the definition of $N^- $ and the last inequality follows from a standard lower bound on the normal distribution function. Successive application of the strong Markov property yields the result.
\end{proof}

\begin{lemma} \label{lemma:renewal_above_time_diffs}
Fix $\gamma, g > 0$. There exist positive constants $t_0(\gamma, g), c$ such that for $j = 1, 2, 3$,$k \geq 0$
\begin{equation*} 
    \prob_{\mathsmaller{\renewalpt}}\left(\alpha_{3k + j} - \alpha_{3k + j - 1} > t \right) = \prob_{\mathsmaller{\renewalpt}}\left(\alpha_{3k + j} - \alpha_{3k + j - 1} > t , \,\, N^- \geq k+1\right) \leq e^{-c\, t},
\end{equation*}
for all $t > t_0(\gamma, g)$.
\end{lemma}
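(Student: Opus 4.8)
The plan is to dispose of the displayed equality, then bound each increment by comparing the relevant first passage time with that of an explicit one-dimensional process, handling $j=2$ and $j=3$ directly and reducing $j=1$ to them. The equality is immediate from the definitions: $\alpha_j=\alpha_{3N^-}$ for every $j\ge 3N^-$, and $3N^-\le 3k<3k+j-1$ when $N^-\le k$ and $j\ge 1$, so $\alpha_{3k+j}-\alpha_{3k+j-1}=0$ on $\{N^-\le k\}$; hence $\{\alpha_{3k+j}-\alpha_{3k+j-1}>t\}\subseteq\{N^-\ge k+1\}$ for $t>0$ (the latter being empty on $\{\tau_b^V<\tau_a^V\}$, where $N^-=0$ and all $\alpha_j$ agree). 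For the tail bound I would apply the strong Markov property at $\alpha_{3k+j-1}$, turning the increment into a first passage time for the system restarted from $(H_{\alpha_{3k+j-1}},V_{\alpha_{3k+j-1}})$; on $\{N^-\ge k+1\}$ this configuration is essentially fixed: $V_{\alpha_{3k}}=a$ for $j=1$; $H_{\alpha_{3k+1}}=0$ (by the definition of $\sigma(\alpha_{3k})$) and $V_{\alpha_{3k+1}}\le a$ for $j=2$; and $H_{\alpha_{3k+2}}=0$ and $V_{\alpha_{3k+2}}=-(g+g/4\gamma)/(1+\gamma)$ for $j=3$ — the last gap vanishing because $V$ reaches the intermediate level from below, hence while increasing, and the velocity increases only on $\partial S$ by \eqref{eqn:system_starts_interior}--\eqref{eqn:increase_on_boundary_only}.

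\noindent\textbf{The increment $\alpha_{3k+2}-\alpha_{3k+1}$.} Starting from $(0,V_{\alpha_{3k+1}})$ with $V_{\alpha_{3k+1}}\in[-g/\gamma,a]$, on $\{\alpha_{3k+2}-\alpha_{3k+1}>t\}$ the velocity stays strictly below $-(g+g/4\gamma)/(1+\gamma)$, so $-V_s>(g+g/4\gamma)/(1+\gamma)$ throughout. Because the initial gap is $0$, the Skorohod formula \eqref{sm} then forces $L_t$ to dominate the running maximum over $[0,t]$ of a Brownian motion with positive drift $(g+g/4\gamma)/(1+\gamma)$, while the drift $\gamma V_s+g$ of $-V$ on this range is bounded above by a constant strictly smaller than $(g+g/4\gamma)/(1+\gamma)$; subtracting, $V$ climbs at a strictly positive net rate. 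The event is therefore contained, after a bounded transient using only $V_{\alpha_{3k+1}}\ge -g/\gamma$, in one of the form $\{\tilde B_t<D-\kappa t\}$ with $\kappa(\gamma,g)>0$, which has probability $\le e^{-ct}$ for $t$ large. (The slack is exactly why the intermediate level carries the extra $g/4\gamma$: with the level $-g/(1+\gamma)$ the net rate would vanish.)

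\noindent\textbf{The increments $\alpha_{3k+3}-\alpha_{3k+2}$ and $\alpha_{3k+1}-\alpha_{3k}$.} For $j=3$ I would bound the exit time of $V$ from $(a,-g/(1+\gamma))$, started from the midpoint $(0,-(g+g/4\gamma)/(1+\gamma))$, by a renewal-type estimate: work in rounds, each beginning at the first time (after the previous round) that $H$ reaches a fixed level $h_0>0$; conditionally on that stopping time, the probability that $H$ then stays positive for the deterministic time $\gamma^{-1}\log 2$ — which by \eqref{eqn:hitting_time_velocity} bounds the time $V$ needs to descend to $a$ from anywhere in $(a,-g/(1+\gamma))$, forcing an exit through $a$ — is bounded below by a constant $q(\gamma,g)>0$ uniformly (the velocity lives in a compact set and $h_0$ is bounded away from $0$). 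Since a round lasts $\gamma^{-1}\log 2$ plus the time for $H$ to reach $h_0$, which has an exponential tail (the gap, having negative drift, makes order-$t$ excursion attempts in time $t$), and a geometric number of rounds suffice, $\prob_{\mathsmaller{\renewalpt}}(\alpha_{3k+3}-\alpha_{3k+2}>t)\le e^{-ct}$. For $j=1$, started from $(H_{\alpha_{3k}},a)$, the velocity decreases deterministically and stays $\le a$ until $\sigma(\alpha_{3k})$, so by \eqref{eqn:system_starts_interior} the gap is dominated by a Brownian motion with drift $a<0$ from height $H_{\alpha_{3k}}$, and $\sigma(\alpha_{3k})-\alpha_{3k}$ by that motion's hitting time of $0$ — an exponential tail once $H_{\alpha_{3k}}$ has one. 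The latter is uniform in $k$: since $V<b$ on $[\alpha_{3k-1},\alpha_{3k}]$ and $H_{\alpha_{3k-1}}=0$, a drift comparison of $H$ on that interval with a reflected Brownian motion $R$ of drift $b$ from $0$ gives $H_{\alpha_{3k}}\le\sup_{u\le\alpha_{3k}-\alpha_{3k-1}}R_u$, and $\alpha_{3k}-\alpha_{3k-1}$ equals $\alpha_0$ when $k=0$ (exponentially tailed by Lemma \ref{lemma:vel_escape_interval}) and the $(k-1)$-st increment of type $j=3$ when $k\ge1$; combining with $\prob(\sup_{u\le T}R_u>x)\le CTe^{-cx}$ gives the claimed tail for $H_{\alpha_{3k}}$.

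\noindent\textbf{Main obstacle.} The hard case is $j=3$: because the velocity's only randomness enters through the boundary local time, there is no autonomous diffusion against which to compare $V$ near the interval $(a,-g/(1+\gamma))$, and the crude Skorohod comparisons only yield polynomial tails, so the indirect renewal argument above — with the attendant need to make both the per-round exit probability $q$ and the exponential tail of the hitting time of $h_0$ uniform in the starting state and in $k$ — is really the crux. This estimate then feeds the gap control that makes $j=1$ work.
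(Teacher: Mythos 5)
Your proposal is correct in substance and takes a genuinely different route from the paper for the cases $j=1$ and $j=3$, while the $j=2$ argument is essentially the paper's Lemma~\ref{lemma:vel_increase_below_renewal_tailprob} restated.

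For $j=3$, the paper relies on Lemma~\ref{lemma:vel_escape_interval}, whose round structure is simpler than yours: it observes that if the velocity has not exited $[a,-g/(1+\gamma)]$ by time $1+\ell$ (with $\ell$ chosen so the deterministic decay from $H_1>0$ would force an exit), then $\sigma(1)\le 1+\ell$, and moreover $B_1$ must fall below an explicit constant; iterating at $\sigma(1)$ gives a geometric tail in increments of fixed length $1+\ell$. Your rounds, each consisting of ``$H$ reaches $h_0$, then stays positive for $\gamma^{-1}\log 2$,'' are a workable alternative, but they require you to establish separately (i) a uniform-in-$\nu\in(a,-g/(1+\gamma))$ exponential tail on the time for $H$ to reach $h_0$ (doable by a Skorohod-map comparison with a reflected BM of drift $a$, as $H$'s drift $V_s$ is bounded below by $a$ on the interval), and (ii) a uniform lower bound $q>0$ on the ``survival'' probability; the paper's version sidesteps both by comparing only on a fixed unit time interval. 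Both are geometric-rounds arguments, but the paper's is more self-contained.

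For $j=1$, the sharper organizational difference is where the strong Markov property is applied. You condition at $\alpha_{3k}$, which forces you to first prove a uniform-in-$k$ exponential tail for $H_{\alpha_{3k}}$ (via your reflected-BM comparison with drift $b$ and your own $j=3$ bound on $\alpha_{3k}-\alpha_{3k-1}$), and only then bound the hitting time of zero. The paper instead conditions at $\alpha_{3k-1}$, where the state is known to be $(0,\nu)$ with $\nu\in(a,b)$ by \eqref{eqn:increase_on_boundary_only}, and applies Lemma~\ref{lemma:gap_hits_zero_before_long}, which packages both steps — controlling the gap at $\tau_a^V$ and the subsequent hitting of zero — into a single statement. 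This avoids the bootstrapping from $j=3$ to $j=1$ and is cleaner; your route works but has more moving parts and leaves several claimed uniformities and tail bounds (e.g.\ $\prob\bigl(\sup_{u\le T}R_u>x\bigr)\le CT e^{-cx}$) to be verified rather than quoted.

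One small precision: in your $j=1$ step, the reflected-BM comparison requires that $V_s\le b$ on $[\alpha_{3k-1},\alpha_{3k}]$. For $k\ge 1$ this holds because $V$ stays in $(a,-g/(1+\gamma))$ on that interval (on $\{N^-\ge k+1\}$), but for $k=0$ (the interval $[0,\alpha_0]$) one only has $V\in[a,b]$, so the comparison drift must be taken as $b$ rather than $-g/(1+\gamma)$; you do use $b$, so this is fine, but it is worth noting that $b<0$ is what makes the reflected process recurrent and your argument go through.
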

\begin{proof} By definition, if $\tau_a^V < \tau_b^V$ then $\alpha_0 = \tau_a^V$ and $\alpha_1 = \sigma(\tau_a^V)$. Lemma \ref{lemma:gap_hits_zero_before_long} shows there exists a positive constant $c$ such that for $t$ sufficiently large,
\begin{equation}\label{eqn:0renewal_above_time_diffs}
    \prob_{\mathsmaller{\renewalpt}}\left(\alpha_{1} - \alpha_{0} > t\right) = \prob_{\mathsmaller{\renewalpt}}\left(\sigma(\tau_a^V) - \tau_a^V > t, \> \tau_a^V < \tau_b^V \right) \leq e^{-ct}.
\end{equation}
Now consider the difference $\alpha_{3k + 1} - \alpha_{3k}$ for $k \geq 1$. If $N^- \leq k$ then $\alpha_{3k + 1} - \alpha_{3k} = 0$. When $N^- \ge k + 1$, $\alpha_{3k - 1}$ is a point of increase of the velocity, since it has hit $-\frac{g+ \frac{g}{4\gamma}}{1 + \gamma}$ from below. By \eqref{eqn:increase_on_boundary_only}, $H_{\alpha_{3k - 1}} = 0$. $\alpha_{3k}$ is the first time after $\alpha_{3k - 1}$ that $V$ hits $a$, and $\alpha_{3k + 1} = \sigma(\alpha_k)$. In addition, $N^- \geq k+1$ implies the velocity starting from time $\alpha_{3k-1}$ does not hit $-g/(1+\gamma)$ before $a$ and therefore does not hit $b > -g/(1+\gamma)$ before $a$ either. Since $\left(H_{\alpha_{3k - 1}}, V_{\alpha_{3k-1}}\right) \in \{0\} \times (a, b)$, we apply the strong Markov property at $\alpha_{3k - 1}$ and Lemma \ref{lemma:gap_hits_zero_before_long} to show 
\begin{multline}\label{eqn:1renewal_above_time_diffs}
    \prob_{\mathsmaller{\renewalpt}} \left(\alpha_{3k + 1} - \alpha_{3k} > t\right) = \prob_{\mathsmaller{\renewalpt}} \left(\alpha_{3k + 1} - \alpha_{3k} > t,\> N^- \geq k + 1\right)\\
    = \expec_{\mathsmaller{\renewalpt}} \left(\mathbbm{1}_{N^- \geq k} \> \prob_{\left(H_{\alpha_{3k - 1}}, V_{\alpha_{3k-1}}\right)}\left(\sigma(\tau_a^V) - \tau_a^V > t,\>\tau_a^V < \tau_{-g/(1+\gamma)}^V\right)\right)\\
    \leq \underset{\nu \in (a, b)}{\sup}\> \prob_{(0, \nu)}\left(\sigma(\tau_a^V) - \tau_a^V > t, \> \tau_a^V < \tau_b^V \right)
    \leq e^{-ct},
\end{multline}
for all $t$ sufficiently large and some positive constant $c$. Using Lemma \ref{lemma:vel_increase_below_renewal_tailprob} with $\epsilon_0 = \frac{g}{4\gamma}, h=0$ and applying the strong Markov property at $\alpha_{3k + 1}$ gives for $k \geq 0$,
\begin{equation}\label{eqn:2renewal_above_time_diffs}
 \prob_{\mathsmaller{\renewalpt}}\left(\alpha_{3k + 2} - \alpha_{3k + 1} > t\right)  =   \prob_{\mathsmaller{\renewalpt}}\left(\alpha_{3k + 2} - \alpha_{3k + 1} > t, \,\, N^- \geq k+1 \right) 
\leq  e^C\, e^{-t\left(g/\gamma\right)^2/32 } .
\end{equation}
Apply Lemma \ref{lemma:vel_escape_interval} for the tail bound on the escape time of the interval $\left[a, -g/(1+\gamma)\right]$,
\begin{equation} \label{eqn:3renewal_above_time_diffs}
    \prob_{\mathsmaller{\renewalpt}}\left(\alpha_{3k+3} - \alpha_{3k +2} > t \right) \leq p^t,
\end{equation}
holds for some $p \in (0, 1)$ depending only on $\gamma, g$ and for all $t$ sufficiently large. Combining \eqref{eqn:0renewal_above_time_diffs}, \eqref{eqn:1renewal_above_time_diffs}, \eqref{eqn:2renewal_above_time_diffs} and \eqref{eqn:3renewal_above_time_diffs} completes the proof.
\end{proof}

\begin{lemma} \label{lemma:renewal_tail_from_below} There exist  positive constants $ t_0'(\gamma, g), c'$ such that
$$
\prob_{\mathsmaller{\renewalpt}}\left(\zeta > t^2,  \tau_a^V < \tau_b^V\right)  = \prob_{\mathsmaller{\renewalpt}}\left(\alpha_{3N^-} > t^2,  \tau_a^V < \tau_b^V\right) \leq e^{-c'\, t},$$
for all $t > t_0'(\gamma, g)$.
\end{lemma}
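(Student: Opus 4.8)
The plan is to use the decomposition of $\zeta$, valid on $\{\tau_a^V<\tau_b^V\}$, into a geometrically-bounded number of excursion lengths each having an exponential tail, and then run a pigeonhole argument. First I would record the structural identity $\zeta=\alpha_{3N^-}$ on the event $\{\tau_a^V<\tau_b^V\}$, which already gives the first equality in the statement. On this event $\tau_a^V\wedge\tau_b^V=\tau_a^V=\alpha_0$, and by the discussion preceding Subsection \ref{sec:renewal_from_below} together with \eqref{eqn:increase_on_boundary_only}, the first time after $\alpha_0$ at which $V$ reaches $-g/(1+\gamma)$ is a point of increase of $V$, hence occurs with $H=0$, so $(H,V)$ is then at the renewal point $\renewalpt$; inspecting the definitions \eqref{eqn:def_stop_times_below} one checks that within each round $k<N^-$ the velocity stays strictly below $-g/(1+\gamma)$, so this first visit is exactly $\alpha_{3N^-}$, and $N^-\ge 1$ here. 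Consequently $\zeta=\alpha_{3N^-}=\sum_{k=0}^{N^--1}\sum_{j=1}^{3}\bigl(\alpha_{3k+j}-\alpha_{3k+j-1}\bigr)$, a sum of $3N^-$ nonnegative terms.

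Next I would split on the size of $N^-$. For $t\ge 1$,
\[
\prob_{\mathsmaller{\renewalpt}}\!\left(\alpha_{3N^-}>t^2,\ \tau_a^V<\tau_b^V\right)\le \prob_{\mathsmaller{\renewalpt}}\!\left(N^->\lfloor t\rfloor\right)+\prob_{\mathsmaller{\renewalpt}}\!\left(\alpha_{3N^-}>t^2,\ 1\le N^-\le \lfloor t\rfloor\right).
\]
By Lemma \ref{lemma:stop_renewal_from_below} the first term is at most $(1-p_0)^{\lfloor t\rfloor}$, where $p_0\in(0,1)$ is the explicit constant there, and this is $\le e^{-c_1 t}$ for a suitable $c_1(\gamma,g)>0$ once $t$ is large. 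For the second term the key observation is combinatorial: on $\{1\le N^-\le\lfloor t\rfloor\}$ there are at most $3\lfloor t\rfloor\le 3t$ increments, so if their sum exceeds $t^2$ then at least one of them exceeds $t^2/(3\lfloor t\rfloor)\ge t/3$; moreover the only ``active'' increments carry index $k\le N^--1$, hence satisfy $N^-\ge k+1$. Therefore
\[
\left\{\alpha_{3N^-}>t^2,\ 1\le N^-\le\lfloor t\rfloor\right\}\ \subseteq\ \bigcup_{k=0}^{\lfloor t\rfloor-1}\bigcup_{j=1}^{3}\Bigl(\bigl\{\alpha_{3k+j}-\alpha_{3k+j-1}>t/3\bigr\}\cap\bigl\{N^-\ge k+1\bigr\}\Bigr),
\]
and a union bound combined with Lemma \ref{lemma:renewal_above_time_diffs} (applied with $t/3$ in place of $t$, valid once $t/3>t_0(\gamma,g)$) bounds the second term by $3\lfloor t\rfloor\,e^{-ct/3}$. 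Adding the two contributions gives $\prob_{\mathsmaller{\renewalpt}}(\zeta>t^2,\tau_a^V<\tau_b^V)\le (1-p_0)^{\lfloor t\rfloor}+3\lfloor t\rfloor\,e^{-ct/3}$; absorbing the linear prefactor into the exponential, there exist $t_0'(\gamma,g)$ and $c'(\gamma,g)>0$ with this quantity $\le e^{-c't}$ for all $t>t_0'$.

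This argument is short because the substantive work already lives in Lemmas \ref{lemma:stop_renewal_from_below} and \ref{lemma:renewal_above_time_diffs}; the one point demanding care is the book-keeping in the union bound. Since $N^-$ is not independent of the excursion durations, $\alpha_{3N^-}$ is not a clean random sum of i.i.d.\ terms, and it is essential that Lemma \ref{lemma:renewal_above_time_diffs} controls the \emph{joint} event $\{\alpha_{3k+j}-\alpha_{3k+j-1}>t,\ N^-\ge k+1\}$; I would make sure the range of $k$ and the pigeonhole step are aligned with precisely that event. It is also worth noting why the exponent $t^2$ (and not $t$) is the natural target here: with the cutoff $N^-\le\lfloor t\rfloor$, a total excursion time exceeding $t^2$ still forces an individual excursion of length of order $t$, so only the fixed-level tail bounds of Lemma \ref{lemma:renewal_above_time_diffs} are needed -- no uniform exponential-moment control -- and the resulting bound is exactly of the form required in Theorem \ref{thm:renewal}.
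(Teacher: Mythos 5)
Your proposal follows essentially the same route as the paper: split on $\{N^- > t\}$ vs.\ $\{1 \le N^- \le t\}$, bound the first piece by Lemma \ref{lemma:stop_renewal_from_below}, and handle the second by a pigeonhole/union-bound argument over the $\alpha$-increments, invoking Lemma \ref{lemma:renewal_above_time_diffs} on the joint events $\{\alpha_{3k+j}-\alpha_{3k+j-1}>t/3,\ N^-\ge k+1\}$. The only cosmetic difference is that the paper pigeonholes over the $\le\lfloor t\rfloor$ coarse increments $\alpha_{3(k+1)}-\alpha_{3k}$ (implicitly splitting each into three pieces when Lemma \ref{lemma:renewal_above_time_diffs} is invoked), whereas you pigeonhole directly over the $\le 3\lfloor t\rfloor$ fine increments and carry the $t/3$ explicitly -- a slightly more transparent bookkeeping of the same idea.
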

\begin{proof}
For any $k \ge 1$, if $N^- \leq k$, then $\alpha_{3(j+1)} = \alpha_{3j}$ for $j \geq k$. So for $t > 0, n \geq 1$, the union bound shows

\begin{equation} \label{eqn:final1_lemma:renewal_tail_from_below}
\prob_{\mathsmaller{\renewalpt}}\left(1 \leq N^- \leq n, \,\,\, \underset{1\leq k \leq (N^- - 1)}{\sup} (\alpha_{3(k+1)} - \alpha_{3k}) > t \right) \leq \sum_{k=0}^{n-1} \prob_{\mathsmaller{\renewalpt}} \left(\alpha_{3(k+1)} - \alpha_{3k} > t , \,\,\, N^- \geq k + 1\right),
\end{equation}
and
\begin{multline} \label{eqn:final2_lemma:renewal_tail_from_below}
    \prob_{\mathsmaller{\renewalpt}}\left(\alpha_{3N^-} > t^2,  \,\, \tau_a^V < \tau_b^V\right) \leq  \prob_{\mathsmaller{\renewalpt}}\left(N^- > t\right) + \prob_{\mathsmaller{\renewalpt}}\left(\sum_{k=0}^{N^- - 1}(\alpha_{3(k+1)} - \alpha_{3k}) > t^2, \,\, 0 < N^- \leq t \right) \\ \leq \prob_{\mathsmaller{\renewalpt}}\left(N^- > t\right) + \prob_{\mathsmaller{\renewalpt}}\left(1 \leq N^- \leq t, \,\,\, \underset{1\leq k \leq (N^- - 1)}{\sup} (\alpha_{3(k+1)} - \alpha_{3k}) > t \right).
\end{multline}
Using \eqref{eqn:final1_lemma:renewal_tail_from_below} in \eqref{eqn:final2_lemma:renewal_tail_from_below} and applying the bounds obtained in Lemma \ref{lemma:stop_renewal_from_below} and Lemma \ref{lemma:renewal_above_time_diffs}, we obtain
\begin{multline*}
 \prob_{\mathsmaller{\renewalpt}}\left(\alpha_{3N^-} > t^2,  \,\, \tau_a^V < \tau_b^V\right) \leq\prob_{\mathsmaller{\renewalpt}}\left(N^- > t\right) + \sum_{k=0}^{\lfloor t\rfloor-1} \prob_{\mathsmaller{\renewalpt}} \left(\alpha_{3(k+1)} - \alpha_{3k} > t , \,\,\, N^- \geq k + 1\right)\\
 \le  \left( 1 - \frac{1}{\sqrt{2\pi}}\frac{\frac{g}{\sqrt{2\gamma}(1+\gamma)}}{\left(\frac{g}{\sqrt{2\gamma}(1+\gamma)}\right)^2 + 1}\, \exp\left\{-\frac{g^2}{4\gamma(1+\gamma)^2}\right\}\right)^{t-1} + te^C \, e^{-c\, (t-1)},
\end{multline*}
which gives the bound stated in the lemma for sufficiently large $t$.
\end{proof}

\subsection{Renewal point approached from above}\label{sec:renewal_from_above} 
Our goal in this section is to bound $\prob_{\mathsmaller{\renewalpt}}\left(\zeta > t^2,  \tau_b^V < \tau_a^V\right) $, the case where $V$ rises to the level $b > \frac{-g}{1+\gamma}$ before the process returns to $\renewalpt$. Define $\beta_{-1}=0$ and $\beta_0 = \tau^V_a \wedge \tau^V_b$. If $\tau_a^V < \tau_b^V$, define $\beta_j = \beta_0$ for all $k \ge 0$ and $N^{+} = 0$. If $V_{\beta_{3k}} = b$ and $k \geq 0$, 
\begin{eqnarray}  \label{eqn:def_stop_times_above}
\beta_{3k + 1} &=& \inf\left\{t \geq \beta_{3k} \,|\,\, V_t = -\frac{g-\frac{g}{4(1+\gamma)}}{1+\gamma}\right\}  \nonumber \\
\beta_{3k + 2} &=& \sigma\left(\beta_{3k + 1}\right) \, \wedge \, \inf\left\{t \geq \beta_{3k + 1} \,|\,\, V_t = -\frac{g}{1+\gamma}\right\} \nonumber \\
\beta_{3k + 3} &=& \inf\left\{t \geq \beta_{3k + 2} \,|\,\, V_t = -\frac{g}{1+\gamma} \quad \text{or} \quad b\right\} \nonumber \\
N^+ &=& \inf\left\{k \geq 1 \, | \,\, V_{\beta_{3k}} = -\frac{g}{1+\gamma}\right\}.
\end{eqnarray}
If $V_{\beta_{3k}} = -\frac{g}{\gamma + 1}$, then $\beta_j = \beta_{3k} \quad j \geq 3k$. The fact that almost surely, $\beta_j < \infty$ for all $j$ and $N^+< \infty$ can be shown using arguments similar to those succeeding \eqref{eqn:def_stop_times_below}. 

For Section \ref{sec:renewal_from_below}, in which $\tau_a^V < \tau_b^V$, we relied on the property \eqref{eqn:increase_on_boundary_only} to imply that $H=0$ at the time when $V$ increases to $\frac{-g}{1+\gamma}$ from below. No such claim is possible when $\tau_b^V < \tau_a^V$ as $V$ crosses $\frac{-g}{1+\gamma}$ from above. In this case, $\beta_{3N^+} \leq \zeta$, with equality only on the event $H_{\beta_{3N^+}} = 0$.  

We therefore break this section into two parts: First, we derive bounds like those in Lemma \ref{lemma:renewal_above_time_diffs} but for the differences $\left\{\beta_j - \beta_{j-1} \right\}_{j\geq 0}$ until the time $\beta_{3N^+}$, when $V$ crosses $\frac{-g}{1+\gamma}$ from above. We then control the time taken after $\beta_{3N^+}$ for the velocity to return to $\frac{-g}{1+\gamma}$ from below.
\begin{lemma}\label{lemma:renewal_tail_from_above} There exists a positive constants $t_0(\gamma, g), c$ such that
$$\prob_{\mathsmaller{\renewalpt}}\left(\beta_{3N^+} > t^2,  \tau_b^V < \tau_a^V\right) \leq e^{-c\, t},$$
for all $t > t_0(\gamma, g)$.
\end{lemma}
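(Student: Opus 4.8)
\noindent\emph{Proof plan.}\quad The plan is to mirror the proof of Lemma~\ref{lemma:renewal_tail_from_below}. On $\{\tau_b^V < \tau_a^V\}$ we have $\beta_0 = \tau_b^V$ with $V_{\beta_0} = b$, and $\beta_{3N^+} = \beta_0 + \sum_{k=0}^{N^+ - 1}\bigl((\beta_{3k+1}-\beta_{3k}) + (\beta_{3k+2}-\beta_{3k+1}) + (\beta_{3k+3}-\beta_{3k+2})\bigr)$. I would combine (i) a geometric bound $\prob_{\mathsmaller{\renewalpt}}\bigl(N^+ > k,\ \tau_b^V < \tau_a^V\bigr) \le (1-q)^k$ for some $q = q(\gamma,g) \in (0,1)$, with (ii) exponential tail bounds, valid on $\{N^+ > k\}$, for each of the three increment types, together with a tail bound for $\beta_0$ itself, which follows from Lemma~\ref{lemma:vel_escape_interval} since $\beta_0$ is the exit time of the velocity from the bounded interval $(a,b)$. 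Granting (i) and (ii), the event $\{\beta_{3N^+} > t^2,\ \tau_b^V<\tau_a^V\}$ is contained in $\{N^+ > t\} \cup \{\beta_0 > t^2/2\} \cup \{1 \le N^+ \le t,\ \sup_{0 \le k < N^+}(\beta_{3(k+1)}-\beta_{3k}) > t/2\}$, and a union bound over $k \le t$ and over the three increment types, carried out as in \eqref{eqn:final1_lemma:renewal_tail_from_below}--\eqref{eqn:final2_lemma:renewal_tail_from_below}, produces a bound of the schematic form $(1-q)^{\lfloor t\rfloor} + C t\,e^{-ct}$, which is $\le e^{-c't}$ for $t$ large.

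For (ii) I would treat the three increments separately. The middle one is deterministically bounded: on $[\beta_{3k+1},\sigma(\beta_{3k+1}))$ the gap is strictly positive, so by \eqref{eqn:system_starts_interior} and \eqref{eqn:hitting_time_velocity} the velocity decreases deterministically from $b' = -(g - \frac{g}{4(1+\gamma)})/(1+\gamma)$ and, unless the gap has already returned to $0$, reaches $-g/(1+\gamma)$ at the fixed time $c_1 := \frac{1}{\gamma}\log\frac{b'+g/\gamma}{g/\gamma - g/(1+\gamma)}$; in either case $\beta_{3k+2}-\beta_{3k+1} \le c_1$, contributing nothing to the tail. For $\beta_{3k+3}-\beta_{3k+2}$: by the previous sentence $V_{\beta_{3k+2}} \in [-g/(1+\gamma),b']$, and $\beta_{3k+3}$ is the first time after $\beta_{3k+2}$ that $V$ reaches $\{-g/(1+\gamma),b\}$, that is, an exit time of the velocity from a bounded interval, so Lemma~\ref{lemma:vel_escape_interval} supplies an exponential tail. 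For $\beta_{3k+1}-\beta_{3k} = \tau_{b'}^V(\beta_{3k})$ with $V_{\beta_{3k}} = b$: on $\{\tau_{b'}^V > t\}$ one has $V_s > b'$ throughout $[0,t]$, hence $\int_0^t(\gamma V_s+g)\,ds \ge (\gamma b'+g)t$, and integrating the velocity equation of \eqref{eqn:system} gives $L_t = V_t - b + \int_0^t(\gamma V_s+g)\,ds \ge (\gamma b'+g)t - (b-b')$; on the other hand the Skorohod representation \eqref{sm} (applied after the Markov shift) together with $V_s > b'$ and $H_0 \ge 0$ gives $L_t \le \sup_{u\le t}(B_u + u|b'|) \le \sup_{u\le t}B_u + t|b'|$. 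Since the explicit value of $b'$ makes $\gamma b'+g-|b'| = \frac{g}{4(1+\gamma)} > 0$, these two estimates force $\sup_{u\le t}B_u \ge \frac{g}{4(1+\gamma)}t - (b-b')$, an event of probability $\le e^{-ct}$ for $t$ large, uniformly over the (arbitrary, non-negative) starting gap; the strong Markov property at $\beta_{3k}$ then delivers the required tail bound on $\{N^+ > k\}$.

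For (i) I would condition at $\beta_{3k+2}$ on the event $\{N^+ > k\}$, where $v := V_{\beta_{3k+2}} \in [-g/(1+\gamma),b']$ and $\beta_{3(k+1)} = \beta_{3k+3}$ is the first hitting time by $V$ of $\{-g/(1+\gamma),b\}$, with $\{N^+ = k+1\} \supseteq \{V_{\beta_{3k+3}} = -g/(1+\gamma)\}$. A short computation from the explicit values of $b,b'$ gives $b - b' = \frac{g}{4(1+\gamma)^2} < \frac{g}{\gamma(1+\gamma)} = g/\gamma - g/(1+\gamma)$, so one may fix $c_2 = c_2(\gamma,g)$ large enough that the deterministic path $s \mapsto (v+g/\gamma)e^{-\gamma s} - g/\gamma$, started from any $v \le b'$, lies below $-g/(1+\gamma) - (b-b')$ by time $c_2$. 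On the event $E := \{\sup_{s\le c_2}(B_s + sg/\gamma) < b-b'\}$, which has a fixed positive probability $q = q(\gamma,g)$ not depending on $v$ or on the current gap, the representation \eqref{sm} and $V_s \ge -g/\gamma$ give $L_{c_2} < b-b'$, so on $[0,c_2]$ the velocity stays within $b-b'$ of the deterministic path; hence $V_s < b$ throughout while $V_{c_2} < -g/(1+\gamma)$, which forces $V$ to hit $-g/(1+\gamma)$ before $b$ and before time $c_2$. Therefore $\prob_{\mathsmaller{\renewalpt}}(N^+ > k+1 \mid \F_{\beta_{3k+2}}) \le 1-q$ on $\{N^+ > k\}$, and iterating yields $\prob_{\mathsmaller{\renewalpt}}(N^+ > k,\ \tau_b^V<\tau_a^V) \le (1-q)^k$.

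The main obstacle is ingredient (i). Unlike in Subsection~\ref{sec:renewal_from_below}, property \eqref{eqn:increase_on_boundary_only} no longer pins the gap at $0$ when the velocity crosses $-g/(1+\gamma)$, so there is no automatic ``success'' probability to read off; instead one must produce, \emph{uniformly over all possible values of the gap}, a positive-probability scenario --- the Brownian particle falling far enough that the gap never returns to $0$ --- in which the velocity coasts below $-g/(1+\gamma)$ before the reflection term can push it back up to $b$. Keeping the comparison via \eqref{sm} clean, and in particular making the lower bound $q$ genuinely independent of the current gap, is the delicate part; the rest is routine bookkeeping that parallels the proof of Lemma~\ref{lemma:renewal_tail_from_below}.
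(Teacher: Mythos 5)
Your proposal is correct and follows essentially the same approach as the paper: the same decomposition into $\beta$-increments, the same deterministic bound on $\beta_{3k+2}-\beta_{3k+1}$, the same use of Lemma~\ref{lemma:vel_escape_interval} for $\beta_{3k+3}-\beta_{3k+2}$, and a geometric tail on $N^+$ followed by the union-bound bookkeeping of Lemma~\ref{lemma:renewal_tail_from_below}. Where the paper simply invokes Lemma~\ref{lemma:vel_drop_above_tailbound} for $\beta_{3k+1}-\beta_{3k}$ and appeals to ``arguments similar to Lemma~\ref{lemma:stop_renewal_from_below}'' for $N^+$, you correctly reprove those pieces inline --- in particular your argument for the geometric tail of $N^+$ is the appropriate adaptation, since approaching $-g/(1+\gamma)$ from above requires \emph{upper}-bounding the velocity by keeping the local time small via \eqref{sm}, rather than the lower-bounding comparison used in Lemma~\ref{lemma:stop_renewal_from_below}.
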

\begin{proof}
Consider Lemma \ref{lemma:vel_drop_above_tailbound} with $u = -\frac{g - \frac{g}{4(1+\gamma)}}{1+\gamma}$ and $\nu = -\frac{g-\frac{g}{2(1+\gamma)}}{1+\gamma}$, so that $(1+\gamma)u + g = g/4(1+\gamma)$ and $u - \nu = -g/4(1+\gamma)^2$. The lemma shows there exist positive constants $c$ and $t_0(\gamma, g)$ such that
\begin{equation} \label{eqn:diff_betas_above}
    \prob_{\mathsmaller{\renewalpt}}\left(\beta_{3k + 1} - \beta_{3k} > t, N^+ \geq k+1 \right)\leq e^{-ct},
\end{equation}
for $t > t_0(\gamma, g)$. On $N^+ \geq k+1$, for $s \in [\beta_{3k + 1}, \beta_{3k + 2}]$, $L_s$ is constant and the velocity decreases deterministically from $-\frac{g-\frac{g}{4(1+\gamma)}}{1+\gamma}$ at $\beta_{3k + 1}$ to some point bounded below by $-\frac{g}{1+\gamma}$ at time $\beta_{3k + 2}$. This observation, along with \eqref{eqn:system}, implies
\begin{equation*}
    -\frac{g}{1+\gamma} + \frac{g-\frac{g}{4(1+\gamma)}}{1+\gamma}\leq V_{\beta_{3k + 2}} - V_{\beta_{3k + 1}} \leq \left(\beta_{3k + 2} - \beta_{3k+1}\right) \left(-\frac{g}{1+\gamma} \right).
\end{equation*}
This implies
\begin{equation}\label{downcr2}
\beta_{3k + 2} - \beta_{3k + 1}\le \frac{1}{4(1+\gamma)}.
\end{equation}
For any $k\ge 0$, the strong Markov property at $\beta_{3k+2}$ and Lemma \ref{lemma:vel_escape_interval} produce a constant $\overset{\sim}{p} \in (0, 1)$ depending only on $\gamma, g$ such that
\begin{equation}\label{downcr3}
  \prob_{\mathsmaller{\renewalpt}}\left(\beta_{3k+3} - \beta_{3k+2} > t \right) \leq \overset{\sim}{p}^t,
\end{equation}
when $t$ is sufficiently large. Arguments similar to those in Lemma \ref{lemma:stop_renewal_from_below} show for sufficiently large $n \ge 1$
\begin{equation}\label{Ncr}
    \prob_{\mathsmaller{\renewalpt}}\left(N^+ > n\right) \leq e^{-c n}
\end{equation}
for a positive constant $c$. Using \eqref{eqn:diff_betas_above}, \eqref{downcr2}, \eqref{downcr3} and \eqref{Ncr} in bounds along the lines of \eqref{eqn:final1_lemma:renewal_tail_from_below} and \eqref{eqn:final2_lemma:renewal_tail_from_below} with $\alpha_k$ replaced by $\beta_k$ for each $k \ge 0$ and $N^-$ replaced by $N^+$, the lemma follows.
\end{proof}
We now consider the time needed for $V$ to return to $-g/(1+\gamma)$ after $\beta_{3N^+}$. On $\tau_a^V < \tau_b^V$ define $\alphavar_j = \tau^V_a$ for $j \geq -1$ and $N^{\sharp}=-1$. On $\tau_b^V < \tau_a^V$ define 
\begin{eqnarray*}
\alphavar_{-1} &=& \inf\left\{t \geq \beta_{3N^+} \,|\,\, H_t= 0 \quad \text{or} \quad V_t  = a\right\}\\
\alphavar_0 &=& \inf\left\{t \geq \alphavar_{-1} \,|\,\, V_t = -\frac{g}{1 + \gamma} \quad \text{or} \quad V_t = a\right\} 
\end{eqnarray*}
If $V_{\alphavar_0} = -g/(\gamma+1)$, define $\alphavar_j = \alphavar_0$ for all $j \ge 1$, otherwise define $\{\alphavar_j\}_{j \geq 1}$ analogously to $\{\alpha_j\}_{j \geq 1}$ in \eqref{eqn:def_stop_times_below}, with $\alphavar_1 = \sigma(\alphavar_0)$ etc. Also define
\begin{equation*}
N^\sharp = \inf\left\{k \geq 0 \,|\,\, V_{\alphavar_{3k}} = -\frac{g}{\gamma + 1}\right\},
\end{equation*}
from which it follows that $\zeta = \alphavar_{3N^\sharp}$ when $\tau_b^V < \tau_a^V$.
\begin{lemma} \label{lemma:renewal_tail_2nd_below} There exist constants $t_0(\gamma, g), c > 0$ such that
$$\prob_{\mathsmaller{\renewalpt}}\left(\zeta - \beta_{3N^+} > t^2,  \tau_b^V < \tau_a^V\right) = \prob_{\mathsmaller{\renewalpt}}\left(\alphavar_{3N^\sharp} - \beta_{3N^+} > t^2,  \tau_b^V < \tau_a^V\right) \leq e^{-c\, t},$$
for $t > t_0(\gamma, g)$.
\end{lemma}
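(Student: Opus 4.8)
The plan is to retrace, after the time $\beta_{3N^+}$, the excursion decomposition used for Lemma~\ref{lemma:renewal_tail_from_below}, now along the stopping times $\{\alphavar_j\}$. On $\{\tau_b^V < \tau_a^V\}$ write
\begin{equation*}
\alphavar_{3N^\sharp} - \beta_{3N^+} = \left(\alphavar_{-1} - \beta_{3N^+}\right) + \left(\alphavar_0 - \alphavar_{-1}\right) + \sum_{k=0}^{N^\sharp - 1}\left(\alphavar_{3(k+1)} - \alphavar_{3k}\right),
\end{equation*}
and bound the three pieces so that each exceeds $\tfrac13 t^2$ with probability at most $e^{-ct}$. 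For the first piece: if $H_{\beta_{3N^+}}=0$ then $\zeta = \beta_{3N^+}$ and every subsequent difference vanishes, so assume $H_{\beta_{3N^+}}>0$, whence the process is in $S^\circ$ with $V_{\beta_{3N^+}} = -g/(1+\gamma)$ and decreasing. By \eqref{eqn:hitting_time_velocity} and \eqref{eqn:system_starts_interior}, either $H$ returns to $0$ first, or $V$ reaches $a$ at exactly the deterministic time $\frac{1}{\gamma}\log\!\big(\frac{-g/(1+\gamma)+g/\gamma}{a+g/\gamma}\big)$; in either case $\alphavar_{-1}-\beta_{3N^+}$ is at most this constant $c(\gamma,g)$ and no tail bound is needed. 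For the second piece: at $\alphavar_{-1}$ either $V=a$, so $\alphavar_0=\alphavar_{-1}$, or $H_{\alphavar_{-1}}=0$ with $V_{\alphavar_{-1}}\in(a,-g/(1+\gamma))$, in which case $\alphavar_0-\alphavar_{-1}$ is the exit time of $(a,-g/(1+\gamma))$ by $V$ started from $(0,V_{\alphavar_{-1}})$; the strong Markov property at $\alphavar_{-1}$ together with Lemma~\ref{lemma:vel_escape_interval} bounds its tail by $p^t$ for large $t$, uniformly over $V_{\alphavar_{-1}}$ in the compact interval.

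For the third piece: if $V_{\alphavar_0}=-g/(1+\gamma)$ then $N^\sharp=0$ and the sum is empty; if $V_{\alphavar_0}=a$, the $\{\alphavar_j\}_{j\geq1}$ are, by construction, defined exactly as the $\{\alpha_j\}_{j\geq1}$ of \eqref{eqn:def_stop_times_below} with $N^\sharp$ in the role of $N^-$. The proofs of Lemmas~\ref{lemma:stop_renewal_from_below} and \ref{lemma:renewal_above_time_diffs} use only that $V_{\alpha_0}=a$, the strong Markov property, and Lemmas~\ref{lemma:gap_hits_zero_before_long}, \ref{lemma:vel_increase_below_renewal_tailprob}, \ref{lemma:vel_escape_interval}, so, after the union-bound manipulation of \eqref{eqn:final1_lemma:renewal_tail_from_below}--\eqref{eqn:final2_lemma:renewal_tail_from_below}, they carry over essentially verbatim to give $\prob_{\mathsmaller{\renewalpt}}\!\big(\sum_{k=0}^{N^\sharp-1}(\alphavar_{3(k+1)}-\alphavar_{3k})>t^2\big)\leq e^{-ct}$. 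Combining the three bounds and absorbing constants then completes the proof.

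The one genuinely new point — and the main obstacle — is the use of Lemma~\ref{lemma:gap_hits_zero_before_long} on $\alphavar_1-\alphavar_0=\sigma(\alphavar_0)-\alphavar_0$ in the sub-case $V_{\alphavar_{-1}}=a$ (i.e.\ $V$ reaches $a$ before $H$ returns to $0$ after $\beta_{3N^+}$). There, unlike in Lemma~\ref{lemma:renewal_tail_from_below} where the gap at $\tau_a^V$ is $O(1)$, the starting gap is $H_{\alphavar_0}=H_{\beta_{3N^+}}+O(1)$, which may have accumulated over the entire preceding $\beta$-excursion, so one cannot quote Lemma~\ref{lemma:gap_hits_zero_before_long} with a fixed initial gap. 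The fix is to show that on $\{\beta_{3N^+}\leq t^2,\ \tau_b^V<\tau_a^V\}$ one has $H_{\beta_{3N^+}}$ bounded by a fixed power of $t$ (in fact of order $\log t$) off an event of probability $e^{-ct}$: in $S^\circ$, $H$ is dominated — via \eqref{eqn:system_starts_interior} — by a Brownian motion with drift $-g/\gamma$ restarted at each visit of $\{H=0\}$ with the current velocity as the new intercept, and within the $\beta$-excursion the velocity can exceed $b$ only on the intervals $[\beta_{3k},\beta_{3k+1}]$, where the overshoot is at most the local-time increment over an interval whose length has an exponential tail by \eqref{eqn:diff_betas_above}; a union bound over $k<N^+$, with $N^+$ controlled by \eqref{Ncr}, then keeps every intercept — hence $H_{\beta_{3N^+}}$ — of order $\log t$. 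Given this, on $(\alphavar_0,\sigma(\alphavar_0))$ the velocity stays below $a<-g/(1+\gamma)$ (it decreases in $S^\circ$), so $H$ has downward drift of magnitude at least $|a|$ and reaches $0$ in time $O\!\big(H_{\beta_{3N^+}}\big)=O(\log t)\ll t^2$ with probability $1-e^{-ct}$, which is the bound required for this piece.
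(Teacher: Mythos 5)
Your decomposition of $\alphavar_{3N^\sharp} - \beta_{3N^+}$ into three pieces, and your handling of the first two pieces, track the paper's proof exactly: the paper also shows $\alphavar_{-1} - \beta_{3N^+}$ is bounded by a deterministic constant using the observation that $L$ is constant and $V$ decreases linearly across $(-g/(1+\gamma), a)$, and bounds the second piece via Lemma~\ref{lemma:vel_escape_interval} when $H_{\alphavar_{-1}}=0$. You have also correctly identified the genuine obstacle in the third piece: when $H_{\alphavar_{-1}}>0$, the gap $H_{\alphavar_0}=H_{\alphavar_{-1}}$ is inherited from the $\beta$-excursion and need not be $O(1)$, so Lemma~\ref{lemma:gap_hits_zero_before_long} cannot be invoked with a bounded initial gap.

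However, your proposed fix contains a quantitative error that is more than cosmetic. You claim $H_{\beta_{3N^+}}$ is of order $\log t$ off an event of probability $e^{-ct}$, but this cannot be right: by \eqref{eqn:renewal_2nd_8} (or your own Brownian-with-drift domination), the gap at $\beta_{3k-2}$ is dominated by $c' + \sup_{s\le t}(-B_s) + (1+1/\gamma)\sup_{s\le t}B_s$, and a running supremum of Brownian motion over $[0,t]$ exceeds $C\log t$ with probability of order $e^{-C^2(\log t)^2/(2t)}\to 1$ as $t\to\infty$, not $e^{-ct}$. To obtain an exponentially small error event, the threshold for the gap must scale like $t$ (not $\log t$), which is exactly why the paper works with the cutoff $tg/4\gamma(1+\gamma)$ in Lemma~\ref{lemma:gap_hits_zero_before_long} and in \eqref{eqn:renewal_2nd_7}--\eqref{eqn:renewal_2nd_9}. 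Fortunately this does not sink your overall argument — a gap of order $t$ (rather than $\log t$) still drains to zero in time $O(t)\ll t^2$ under the negative drift $a$ — but the parenthetical claim should be dropped and replaced by the correct $O(t)$ scaling, together with the resulting Gaussian-tail probability estimate.

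There is also a structural shortcut that is worth being honest about. Your argument reasons about $H_{\beta_{3N^+}}$ directly, but $N^+$ is random, so applying the strong Markov property there is awkward; the paper instead conditions on $\{N^+=k\}$ and applies the strong Markov property at the deterministic-indexed stopping time $\beta_{3k-2}$, using the observation \eqref{eqn:renewal_2nd_6} that on this event $\alphavar_0 = \inf\{s\ge\beta_{3k-2}\,|\,V_s = a\}$. The paper then splits further according to whether $\beta_{3k-2}-\beta_{3k-3}\le t$ (controlled via Lemma~\ref{lemma:vel_drop_above_tailbound}), then whether $H_{\beta_{3k-2}}\le tg/4\gamma(1+\gamma)$ (handled by Lemma~\ref{lemma:gap_hits_zero_before_long}) or not (handled by \eqref{eqn:renewal_2nd_8} and Gaussian tails). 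This single-interval domination of $H$ on $[\beta_{3k-3},\beta_{3k-2}]$ is cleaner and more easily made rigorous than tracking velocity intercepts at every visit to $\{H=0\}$ across the entire $\beta$-excursion as you propose; the latter would require a nontrivial union bound over boundary visits, whose number is itself random. In short: right architecture, right obstacle, but a wrong quantitative claim and a route to the gap bound that, while salvageable, is substantially more work to make airtight than the paper's direct estimate.
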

\begin{proof}
First note that if $ \tau_b^V < \tau_a^V$ and $H_{\beta_{3N^+}} = 0$, then $\beta_{3N^+} = \alphavar_{-1} = \alphavar_0$ and $N^\sharp = 0$. Since $V_{\beta_{3N^+}} = -g/(1+\gamma)$, in this case we have $\alphavar_{3N^\sharp} = \beta_{3N^\sharp} = \beta_{3N^+}= \zeta$. 

Henceforth we consider the case $\tau_b^V < \tau_a^V, H_{\beta_{3N^+}} > 0$. Arguing as in Lemma \ref{lemma:stop_renewal_from_below}, we have for $n \geq 0$ and a positive constant $c$,
\begin{equation}\label{eqn:renewal_2nd_0}
    \prob_{\mathsmaller{\renewalpt}}\left(N^\sharp > n \right) \leq e^{-cn}.
\end{equation}
Since $L_s$ is constant on $[\beta_{3N^+}, \alphavar_{-1}]$, we proceed as in \eqref{downcr2} to show there exists a constant $C > 0$ such that
\begin{equation}\label{eqn:renewal_2nd_1}
    \alphavar_{-1} - \beta_{3N^+} \leq C.
\end{equation}
In the following analysis, there are two cases to consider: $H_{\alphavar_{-1}} = 0$ and $H_{\alphavar_{-1}} > 0$. In the former situation, $V_{\alphavar_{-1}} \in (a, -g/(1+\gamma))$, so we use Lemma \ref{lemma:vel_escape_interval} and the strong Markov property at $\alphavar_{-1}$ to show there exists $\bar{p} \in (0, 1)$ depending on $\gamma, g$ such that for $t$ sufficiently large,
\begin{equation}\label{eqn:renewal_2nd_2}
    \prob_{\mathsmaller{\renewalpt}}\left(\alphavar_0 - \alphavar_{-1} > t, \>H_{\alphavar_{-1}} = 0 \right) \leq \underset{\nu \in (a, -g/(1+\gamma))}{\sup}\>\prob_{(0, \nu)}\left(t < \tau_a^V \wedge \tau_{-g/(1+\gamma)}^V \right) \leq \bar{p}^t.
\end{equation}
The analysis of $\{\alphavar_{j+1} - \alphavar_{j}\}_{j \ge 0}$ can be done exactly as that of $\{\alpha_{j+1} - \alpha_j\}_{j \ge 3}$ performed in Lemma \ref{lemma:renewal_above_time_diffs} using Lemmas \ref{lemma:vel_escape_interval}, \ref{lemma:gap_hits_zero_before_long} and \ref{lemma:vel_increase_below_renewal_tailprob}. Thus we proceed as in Lemma \ref{lemma:renewal_tail_from_below}, using $N^\sharp$ in place of $N^-$ and \eqref{eqn:renewal_2nd_0} instead of Lemma \ref{lemma:stop_renewal_from_below}, to obtain a positive constant $c$ such that for $t$ sufficiently large
\begin{equation}\label{eqn:renewal_2nd_3}
    \prob_{\mathsmaller{\renewalpt}}\left(\alphavar_{3N^\sharp} - \beta_{3N^+} > t^2,  \tau_b^V < \tau_a^V, \>H_{\alphavar_{-1}} = 0 \right) \leq e^{-ct}.
\end{equation}
Now consider $H_{\alphavar_{-1}} > 0$. In that case, $\alphavar_{-1} = \alphavar_0 = \inf\{s \geq \beta_{3N^+}\> | \> V_s = a \}$. The methods of Lemma \ref{lemma:renewal_above_time_diffs} fail to bound the probability of $\alpha_1 - \alpha_0 > t$ since in principle $H_{\alphavar_{-1}}$ might be quite large. We first apply the union bound to show for $k \geq 1$,
\begin{multline}\label{eqn:renewal_2nd_4}
    \prob_{\mathsmaller{\renewalpt}}\left(\alphavar_1 - \alphavar_0 > t, \> H_{\alphavar_{-1}} > 0, \> N^+ = k\right)
    \leq \prob_{\mathsmaller{\renewalpt}}\left(N^+ \geq k,\> \beta_{3k-2} - \beta_{3k-3} > t \right) \\
    + \prob_{\mathsmaller{\renewalpt}}\left(\alphavar_1 - \alphavar_0 > t, \> H_{\alphavar_{-1}} > 0, \> N^+ = k,\>\beta_{3k-2} - \beta_{3k-3} \leq t \right) \leq  \prob_{\mathsmaller{\renewalpt}}\left(N^+ \geq k,\> \beta_{3k-2} - \beta_{3k-3} > t \right) \\
    + \prob_{\mathsmaller{\renewalpt}}\left(\alphavar_1 - \alphavar_0 > t, \> H_{\alphavar_{-1}} > 0, \> N^+ = k, \> \beta_{3k-2} - \beta_{3k-3} \leq t, \> H_{\beta_{3k-2}} \le tg/4\gamma(1+\gamma) \right) \\
    + \prob_{\mathsmaller{\renewalpt}}\left(\alphavar_1 - \alphavar_0 > t, \> H_{\alphavar_{-1}} > 0, \> N^+ = k, \> \beta_{3k-2} - \beta_{3k-3} \leq t, \> H_{\beta_{3k-2}} > tg/4\gamma(1+\gamma) \right).
\end{multline}
Recall that $\beta_{3k - 3}$ is a point of increase of $V$ to the level $b= -\left(g-g/2(1+\gamma) \right)/ (1+\gamma)$, so \eqref{eqn:increase_on_boundary_only} shows $H_{\beta_{3k - 3}} = 0$. $\beta_{3k-2}$ is the first time after $\beta_{3k - 3}$ the velocity falls to $-\left(g-g/4(1+\gamma) \right)/ (1+\gamma)$. Using Lemma \ref{lemma:vel_drop_above_tailbound} and the strong Markov property at $\beta_{3k-3}$, there exists constants $c, t_0(\gamma, g) > 0$ such that for all $t > t_0(\gamma, g)$,
\begin{multline}\label{eqn:renewal_2nd_5}
    \prob_{\mathsmaller{\renewalpt}}\left(N^+ \geq k,\> \beta_{3k-2} - \beta_{3k-3} > t \right) = \expec_{\mathsmaller{\renewalpt}}\left(N^+ \geq k,\> \prob_{(H_{\beta_{3k}}, V_{\beta_{3k}})}\left(\tau_{-\left(g-g/4(1+\gamma) \right)/ (1+\gamma)}^V > t \right) \right)\\
    \leq \prob_{\mathsmaller{\renewalpt}}\left(N^+ \geq k\right) \> e^{-ct}.
\end{multline}
When $N^+ = k$ and $H_{\beta_{3N^+}}, H_{\alphavar_{-1}} > 0$, the velocity starting from $\beta_{3k-3}$ makes an excursion from $b$ to $-\frac{g - g/4(1+\gamma)}{1+\gamma} $ at time $\beta_{3k-2}$, then to $a$ without returning to $b$. Therefore, 
\begin{equation}\label{eqn:renewal_2nd_6}
    \alphavar_0 = \alphavar_{-1} = \inf\{s \geq \beta_{3k - 2}\> | \> V_s = a \} < \inf\{s \geq \beta_{3k - 2}\> | \> V_s = b \}, \quad \quad \text{when}\> \  N^+ = k.
\end{equation}
Using \eqref{eqn:renewal_2nd_6}, the strong Markov property at $\beta_{3k-2}$ and Lemma \ref{lemma:gap_hits_zero_before_long} we obtain $t_1(\gamma, g) > 0$, which we take to be larger than $t_0(\gamma, g)$, such that for $t > t_1(\gamma, g)$,
\begin{multline}\label{eqn:renewal_2nd_7}
    \prob_{\mathsmaller{\renewalpt}}\left(\alphavar_1 - \alphavar_0 > t, \> H_{\alphavar_{-1}} > 0, \> N^+ = k, \beta_{3k-2} - \beta_{3k-3} \leq t, \> H_{\beta_{3k-2}} \le tg/4\gamma(1+\gamma) \right) \\
    \leq  \prob_{\mathsmaller{\renewalpt}}\left(N^+ \geq k\right)\> \underset{(h, \nu) \in [0, tg/4\gamma(1+\gamma)]\times(a, b)}{\sup}\> \prob_{(h, \nu)}\left(\sigma(\tau_a^V) - \tau_a^V > t, \> \tau_a^V < \tau_b^V \right) \leq \prob_{\mathsmaller{\renewalpt}}\left(N^+ \geq k\right) \> e^{-ct}.
\end{multline}
Fix $t > t_1(\gamma, g)$ and set $(H_0, V_0) = (0, b)$. System equations \eqref{eqn:system} show $H_u + V_u/\gamma = b/\gamma - ug/\gamma - B_u +(1+1/\gamma)L_u$. When $u < t \wedge \tau_{-\left(g-g/4(1+\gamma) \right)/ (1+\gamma)}^V$, we have $S_u \geq -u\left(g-g/4(1+\gamma) \right)/ (1+\gamma)$ and $L_u \leq \sup_{s \leq u}\left(B_s + s\left(g-g/4(1+\gamma) \right)/ (1+\gamma)\right) \leq \sup_{s \leq u}\>B_s + u\left(g-g/4(1+\gamma) \right)/ (1+\gamma) $. As a result, with $c' = b/\gamma + \left(g-g/4(1+\gamma) \right)/\gamma (1+\gamma)> 0$,
\begin{multline}\label{eqn:renewal_2nd_8}
    H_u = H_u + V_u / \gamma - V_u/\gamma \leq c' - B_u - ug/\gamma + (1+1/\gamma)L_u\\
    \leq c' - B_u + u\left((1+1/\gamma)\left(g-g/4(1+\gamma) \right)/ (1+\gamma) - g/\gamma\right) + (1+1/\gamma)\sup_{s \leq u}\>B_s \\
    = c' - B_u - ug/4\gamma(1+\gamma) + (1+1/\gamma)\sup_{s \leq u}\>B_s \leq c' + \sup_{s \leq t}(-B_s) + (1+1/\gamma)\sup_{s \leq t}\>B_s.
\end{multline}
From \eqref{eqn:renewal_2nd_8}, we conclude that if $\tau_{-\left(g-g/4(1+\gamma) \right)/ (1+\gamma)}^V \leq t$, then 
$
H_{\tau_{-\left(g-g/4(1+\gamma) \right)/ (1+\gamma)}^V} > tg/4\gamma(1+\gamma)
$ 
implies $\sup_{s \leq t}(-B_s) + (1+1/\gamma)\sup_{s \leq t}\>B_s> tg/4\gamma(1+\gamma) - c' $. We choose $t_1(\gamma, g)$ large enough that $t_1(\gamma, g) - c' > 0$.  Now the strong Markov property at $\beta_{3k-3}$, \eqref{eqn:renewal_2nd_8} and Gaussian tail bounds show there exists a $t_2(\gamma, g) > t_1(\gamma, g)$ and constants $C, C', c, c' > 0$ such that that for $t > t_2(\gamma, g)$,
\begin{multline}\label{eqn:renewal_2nd_9}
    \prob_{\mathsmaller{\renewalpt}}\left(\alphavar_1 - \alphavar_0 > t, \> H_{\alphavar_{-1}} > 0, \> N^+ = k, \beta_{3k-2} - \beta_{3k-3} \leq t, \> H_{\beta_{3k-2}} > tg/4\gamma(1+\gamma) \right) \\
    \leq \prob_{\mathsmaller{\renewalpt}}\left(N^+ \geq k\right) \> \prob_{(0, b)}\left(H_{\tau_{-\frac{g-g/4(1+\gamma)}{1+\gamma}}^V} > tg/4\gamma(1+\gamma), \> \tau_{-\left(g-g/4(1+\gamma) \right)/ (1+\gamma)}^V \leq t \right)\\
    \leq \prob_{\mathsmaller{\renewalpt}}\left(N^+ \geq k\right) \> \left[\prob\left(\sup_{s \leq t}\left(-B_s \right) > tg/8\gamma(1+\gamma) - c'/2 \right)\right. \qquad\qquad\qquad\qquad\qquad\\
    \qquad\qquad\left. + \prob\left(\sup_{s \leq t}\left(B_s \right) > \frac{\gamma}{1+\gamma}\left(tg/8\gamma(1+\gamma) - c'/2 \right)\right)\right] \\
    \leq \prob_{\mathsmaller{\renewalpt}}\left(N^+ \geq k\right)\>\left[e^{C'}\>e^{-c't} +  e^C\>e^{-ct}\right].
\end{multline}
By \eqref{Ncr}, $\expec_{\mathsmaller{\renewalpt}} \> N^+ \leq e^C$ for a positive constant $C$. We apply \eqref{eqn:renewal_2nd_5}, \eqref{eqn:renewal_2nd_7} and \eqref{eqn:renewal_2nd_9} to \eqref{eqn:renewal_2nd_4} and sum over $k$ to obtain positive constants $c$, $t_2(\gamma, g)$ such that,
\begin{multline}\label{eqn:renewal_2nd_10}
    \prob_{\mathsmaller{\renewalpt}}\left(\alphavar_1 - \alphavar_0 > t, \> H_{\alphavar_{-1}} > 0\right) = \sum_{k = 1}^\infty \prob_{\mathsmaller{\renewalpt}}\left(\alphavar_1 - \alphavar_0 > t, \> H_{\alphavar_{-1}} > 0, \> N^+ = k, \> N^+ \geq k\right) \\
    \leq e^{-ct} \> _{k = 1}^\infty \prob_{\mathsmaller{\renewalpt}}\left(N^+  \ge k\right) \leq e^C \> e^{-ct},
\end{multline}
for $t > t_2(\gamma, g)$. Now the analysis of $\{\alphavar_{j+1} - \alphavar_{j}\}_{j \ge 1}$ can be done exactly as that of $\{\alpha_{j+1} - \alpha_j\}_{j \ge 1}$ performed in Lemma \ref{lemma:renewal_above_time_diffs}. Once again we argue as in Lemma \ref{lemma:renewal_tail_from_below}, using $N^\sharp$ in place of $N^-$ and \eqref{eqn:renewal_2nd_0} instead of Lemma \ref{lemma:stop_renewal_from_below}, to obtain a positive constant $c$ such that for $t$ sufficiently large
\begin{equation}\label{eqn:renewal_2nd_11}
    \prob_{\mathsmaller{\renewalpt}}\left(\alphavar_{3N^\sharp} - \beta_{3N^+} > t^2,  \tau_b^V < \tau_a^V, \>H_{\alphavar_{-1}} > 0 \right) \leq e^{-ct}.
\end{equation}
\eqref{eqn:renewal_2nd_3} and \eqref{eqn:renewal_2nd_11} prove the lemma.
\end{proof}

Now, we have all the tools needed to prove Theorems \ref{thm:renewal} and \ref{thm:statmeas_renewal}.
\begin{proof}[Proof of Theorem \ref{thm:renewal}]
This is a direct consequence of Lemmas \ref{lemma:renewal_tail_from_below}, \ref{lemma:renewal_tail_from_above} and \ref{lemma:renewal_tail_2nd_below}.
\end{proof}

\begin{proof}[Proof of Theorem \ref{thm:statmeas_renewal}] Theorem \ref{thm:renewal} and the strong Markov property, proven to hold in Theorem \ref{thm:existence}, show that the system is `classical regenerative' and satisfies the conditions of Theorem 2.1, Chapter 10, in \cite{thor}, which gives a stationary measure of the stated form. 

Now we prove the second claim of the theorem, which also implies uniqueness of the stationary measure. Define $N_t = \sup\{k \geq 0 \> : \> \zeta_k \leq t \}$, the number of renewals before time $t$. It is enough to show the claim for bounded, non-negative $f$. Recalling $\zeta = \zeta_0$,
\begin{multline}\label{ergineq}
    \int_0^{\zeta\wedge t} f(H_s, V_s)\> ds + \indi{\zeta \leq t} \sum_{k=1}^{N_t}\> \int_{\zeta_{k-1}}^{\zeta_k} f(H_s, V_s)\> ds \leq \int_0^t f(H_s, V_s)\> ds \\
    \leq \int_0^{\zeta} f(H_s, V_s)\> ds + \sum_{k=1}^{N_t + 1}\> \int_{\zeta_{k-1}}^{\zeta_k} f(H_s, V_s)\> ds.
\end{multline}
Now the Strong Law of Large Numbers implies $N_t/t \to 1/\expec_{\mathsmaller{\renewalpt}}( \zeta ) > 0$ (e.g. Theorem 2.4.6 of \cite{durrett}). Using \eqref{eqn:renewal_sequence} and applying the Law of Large Numbers in \eqref{ergineq} completes the proof.
\end{proof}
\section{Fluctuation bounds on a renewal interval}\label{flucinter}
We show that on the interval $[0, \zeta]$, the probability of the velocity hitting a large value $y$ has Gaussian decay in $y$ and the corresponding probability of the gap hitting a large value $x$ has exponential decay in $x$. These estimates directly imply bounds on the tails of the stationary measure given in Theorem \ref{thm:tails_under_stationarity} via the representation in Theorem \ref{thm:statmeas_renewal}, and also produce the oscillation estimates stated in Theorem \ref{thm:fluctuations}. 

\subsection{Velocity bounds} \label{sec:vel_bounds}
\begin{lemma} \label{lemma:vel_hits_large_before_small}
There exists a positive constant $ c $ such that $$ \prob_{(0, y)}\left(\tau_{2y}^V < \tau^V_{y/2}\right) \leq e^{c}e^{-(1+\gamma)(y + \frac{g}{1+\gamma})^2}, $$
for all $y > 0$.
\end{lemma}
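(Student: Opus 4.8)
The plan is to reduce the event $\{\tau_{2y}^V < \tau_{y/2}^V\}$ to the event that the driving Brownian motion $B$ overshoots a steeply sloped line, and then invoke the classical first-passage identity $\prob\big(\exists\, t \ge 0 : B_t - \mu t \ge a\big) = e^{-2\mu a}$, valid for $\mu, a > 0$.

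Concretely, I would set $T := \tau_{2y}^V$ and note that on the event in question $T < \infty$; moreover, since $V_0 = y > y/2$ and $V$ is continuous, $V_s > y/2$ throughout $[0,T]$, so $S_u = \int_0^u V_s\, ds \ge uy/2$ for every $u \le T$. The key point is that climbing to level $2y$ forces a large local time: evaluating the velocity equation $V_t = y - \gamma S_t - g t + L_t$ (from \eqref{eqn:system} with $(H_0,V_0) = (0,y)$) at $t = T$ yields the \emph{exact} identity $L_T = y + \gamma S_T + gT$. This is precisely where the sharp $(1+\gamma)$ constant in the exponent comes from; using merely $L_T \ge y$ would produce $e^{-y^2}$ and miss the viscosity-dependent improvement, since it is the negative drift $-\gamma V - g$ that must be overcome by $L$, contributing the extra $\gamma S_T + gT$. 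Feeding this identity into the Skorohod representation \eqref{sm} (with $h = 0$), $L_T = \sup_{u \le T}(B_u - S_u) \vee 0$, and using $L_T \ge y > 0$, I obtain a time $u^\ast \le T$ with $B_{u^\ast} - S_{u^\ast} = y + \gamma S_T + gT$; then $S_T \ge S_{u^\ast}$, $T \ge u^\ast$ and $S_{u^\ast} \ge u^\ast y/2$ give $B_{u^\ast} \ge y + u^\ast\big(\tfrac{(1+\gamma)y}{2} + g\big)$.

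Hence $\sup_{t \ge 0}\big(B_t - \mu t\big) \ge y$ with $\mu = \tfrac{(1+\gamma)y}{2} + g$, so $\prob_{(0,y)}\big(\tau_{2y}^V < \tau_{y/2}^V\big) \le e^{-2\mu y} = e^{-(1+\gamma) y^2 - 2gy}$. Completing the square, $(1+\gamma)\big(y + \tfrac{g}{1+\gamma}\big)^2 = (1+\gamma) y^2 + 2gy + \tfrac{g^2}{1+\gamma}$, so this bound equals $e^{g^2/(1+\gamma)}\, e^{-(1+\gamma)(y + g/(1+\gamma))^2}$, which is of the asserted form with $c = g^2/(1+\gamma) > 0$. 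Beyond this translation the argument is short; the only step demanding care is the reduction above — recognizing that one must carry the \emph{exact} identity $L_T = y + \gamma S_T + gT$, rather than an inequality, through the Skorohod representation in order to capture the correct dependence on $\gamma$ and $g$.
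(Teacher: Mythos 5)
Your proof is correct and takes essentially the same route as the paper: both use the velocity SDE and the Skorohod representation for $L$, together with $V_s > y/2$ (hence $S_u \ge uy/2$) on the event, to reduce the estimate to $\prob\bigl(\sup_{t\ge 0}\bigl(B_t - t(\tfrac{(1+\gamma)y}{2}+g)\bigr) \ge y\bigr) = e^{-2y((1+\gamma)y/2+g)}$, and then complete the square. The only cosmetic difference is that the paper dominates $V_t$ by a majorant $F_t = y + \sup_{u\le t}(B_u - uy/2) - (g+\gamma y/2)t$ and argues that the hitting time of $F$ at level $2y$ is a point of increase of the running supremum, whereas you evaluate the exact identity $L_T = y + \gamma S_T + gT$ at the hitting time $T$ and use that the Skorohod supremum is attained — same idea, same bound.
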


\begin{proof} Define $F_t = y + \sup_{u\leq t}\left(B_u - uy/2\right) - (g + y\gamma / 2)t$. For $t < \tau^V_{y/2}\wedge\tau_{2y}^V$, the system equations \eqref{eqn:system} and the Skorohod representation \eqref{sm} imply $V_t \leq F_t $. Hence,
\begin{multline*}
    \prob_{(0, y)}\left(\tau_{2y}^V < \tau^V_{y/2}\right) \leq  \prob\left(F_t \> \text{hits 2y before y/2}\right) 
\leq \prob\left(\underset{t < \infty}{\sup}(B_t - ty/2 - (g + y\gamma / 2)t ) \geq y\right) \\
= \exp\left\{-2y(g + (1+\gamma)y/2) \right\} = e^{\frac{g^2}{1 + \gamma}}e^{-(1+\gamma)(y + \frac{g}{1+\gamma})^2} .
\end{multline*}
The second inequality comes from the fact that the time at which $F_t$ hits $2y$ must be a point of increase of $\sup_{u\leq t}\left(B_u - uy/2\right)$. The first equality uses the fact that for any $u > 0$, $\underset{s < \infty}{\sup}\left(B_s - su \right) \overset{d}{=} \text{Exponential}(2u)$ (see Chapter 3.5 of \cite{kar}).
\end{proof}

Fix any $y > 0$ and choose the starting configuration $(H_0,V_0) = (0,y)$. Define the following sequence of stopping times: $\tau_0 = 0$ and for $k \geq 0$,
\begin{eqnarray} \label{eqn:def_bubble_stop_times}
\tau_{2k + 1} &=& \inf\{t \geq \tau_{2k} \, | \,\, V_t = 2y \quad \text{or} \quad y/2\} \quad \quad \text{if } V_{\tau_{2k}} \neq -g/(1+\gamma), 2y\nonumber \\
&=& \tau_{2k} \quad \quad \quad \text{otherwise} \nonumber \\
\tau_{2k + 2} &=& \inf\{t \geq \tau_{2k} \, | \,\, V_t = y \quad \text{or} \quad -g/(1+\gamma)\} \quad \quad \text{if } V_{\tau_{2k + 1}} \neq -g/(1+\gamma), 2y\nonumber \\
&=& \tau_{2k + 1} \quad \quad \text{otherwise} \nonumber \\
J_y &=& \min\{k \geq 1 \, | \,\, V_{\tau_{2k}} = -g/(1+\gamma) \quad  \text{  or  } \quad 2y \} .
\end{eqnarray}

\begin{lemma} \label{lemma:bubble_excursion_number} There exist positive constants $y'(\gamma, g)$ and $p(\gamma,g) \in (0,1)$ such that $$\prob_{(0,y)}\left(J_y > n\right) \leq p(\gamma,g)^n, $$ for $y > y'(\gamma, g)$ and $n \geq 0$.
\end{lemma}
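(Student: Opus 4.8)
The plan is to show that each pair of steps $(\tau_{2k}, \tau_{2k+2})$ starting from the level $y$ has a probability bounded away from zero of being a ``terminal'' excursion — i.e. that $V$ reaches $-g/(1+\gamma)$ before returning to $y$ — so that $J_y$ is stochastically dominated by a geometric random variable. Concretely, I would first use the strong Markov property at the times $\tau_{2k}$ (on the event $J_y > k$, where $V_{\tau_{2k}} = y$) to reduce the statement to a single estimate: there is $p(\gamma,g) \in (0,1)$ and $y'(\gamma,g)$ such that for all $y > y'(\gamma,g)$,
\[
\prob_{(0,y)}\left(V \text{ returns to } y \text{ before hitting } -g/(1+\gamma) \text{ or } 2y\right) \leq p(\gamma,g).
\]
Then $\prob_{(0,y)}(J_y > n) \leq p(\gamma,g)^n$ follows by iterating. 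Note the first ``half-step'' $\tau_0 \to \tau_1$ starts exactly at $y$, and for $k \geq 1$ the configuration at $\tau_{2k}$ is $(0, y)$ by property \eqref{eqn:increase_on_boundary_only} (the velocity increases back to $y$ only on the boundary $H=0$), so the strong Markov reduction is clean and the supremum over starting gaps is not an issue here — unlike in Lemma \ref{lemma:renewal_tail_2nd_below}.

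For the single-step estimate, the key observation is that once $V$ drops to $y/2$, it is very likely to continue all the way down to $-g/(1+\gamma)$ before climbing back to $y$, because the deterministic drift $-(\gamma V + g)$ is strongly negative when $V$ is near $y/2$ for large $y$, and by \eqref{eqn:system_starts_interior} the velocity decreases monotonically while $H > 0$. I would split: either $V$ hits $2y$ before $y/2$ — which by Lemma \ref{lemma:vel_hits_large_before_small} has probability at most $e^{c} e^{-(1+\gamma)(y + g/(1+\gamma))^2}$, negligible for large $y$ — or $V$ hits $y/2$ first, in which case I apply the strong Markov property at $\tau^V_{y/2}$ and estimate, starting from $(H, y/2)$ with $H \geq 0$, the probability that $V$ climbs back to $y$ before reaching $-g/(1+\gamma)$. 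For this last piece, an upward excursion of $V$ from $y/2$ to $y$ requires the local time $L$ to increase by at least $y/2$ plus compensation for the drift; using the Skorohod representation \eqref{sm} one bounds $L_t$ below by a Brownian supremum and shows that accumulating that much local time before $V$ — dominated above by a Brownian motion with the strongly negative drift $-(\gamma y/2 + g)$ near this range — falls to $-g/(1+\gamma)$ is exponentially unlikely in $y$. Alternatively, and more simply, one can lower-bound the probability of the event $\{V \text{ falls straight from } y/2 \text{ to } -g/(1+\gamma) \text{ while } H \text{ stays positive}\}$: on the time interval needed for the deterministic velocity curve $(y/2 + g/\gamma)e^{-\gamma t} - g/\gamma$ to travel from $y/2$ to $-g/(1+\gamma)$ (a time bounded uniformly in $y$ once $y$ is large, by \eqref{eqn:hitting_time_velocity}), one needs the driving Brownian motion to not push $H$ down to $0$; since $H$ is dominated below by $h + (\text{const}) - B_t - tg/\gamma$ and also bounded below via the Skorohod map, a standard Brownian fluctuation estimate gives a lower bound on this probability that is uniform in $y \geq y'(\gamma,g)$ and in the (possibly large) starting gap. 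Taking $p(\gamma,g) = 1 - (\text{that lower bound}) + e^{c}e^{-(1+\gamma)(y'(\gamma,g))^2/2} < 1$ finishes it.

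The main obstacle I anticipate is the piece where the starting gap $H$ at level $y/2$ could be large: one must argue that a large gap does not help the velocity climb back up to $y$. The cleanest route around this is to observe that the velocity is monotonically \emph{decreasing} as long as $H > 0$ (property \eqref{eqn:increase_on_boundary_only}), so for $V$ to increase from $y/2$ back towards $y$ the gap must first return to $0$; and by the domination $H_t \leq H_0 + \text{const} - B_t - tg/\gamma$ from \eqref{eqn:system_starts_interior}, even a large $H_0$ only delays — it does not prevent — but during that delay the velocity keeps falling, so in fact a large gap makes it \emph{more} likely that $V$ reaches $-g/(1+\gamma)$ first. Making this monotonicity-plus-domination argument quantitative and uniform in $H_0$ is the one step requiring care; everything else is a routine combination of the strong Markov property, Lemma \ref{lemma:vel_hits_large_before_small}, and elementary Gaussian tail bounds.
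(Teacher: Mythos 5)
Your high-level scaffolding is sound and matches the paper: reduce by the strong Markov property to a single-step estimate, note that $H_{\tau_{2k}} = 0$ when $V_{\tau_{2k}}=y$ by \eqref{eqn:increase_on_boundary_only}, and split off the event $\{\tau^V_{2y} < \tau^V_{y/2}\}$ via Lemma \ref{lemma:vel_hits_large_before_small}. But both of the arguments you offer for the remaining key estimate --- bounding, uniformly in the starting gap, the probability that $V$ climbs from $y/2$ back to $y$ before reaching $-g/(1+\gamma)$ --- contain genuine errors.

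Your ``simpler'' route rests on the claim that the time for the deterministic curve $(y/2 + g/\gamma)e^{-\gamma t} - g/\gamma$ to reach $-g/(1+\gamma)$ is bounded uniformly in $y$. It is not: by \eqref{eqn:hitting_time_velocity} that time equals $\gamma^{-1}\log\bigl(\frac{y/2 + g/\gamma}{g/\gamma(1+\gamma)}\bigr)$, which grows like $\gamma^{-1}\log y$. Over a window of length $\sim\log y$ the driving Brownian motion fluctuates by $\sim\sqrt{\log y}$, so there is no uniform-in-$y$ lower bound on the probability that $H$ stays positive for the whole descent, and the argument collapses. Your first route is also incomplete: the domination of $V$ by a Brownian motion with drift $-(\gamma y/2 + g)$ only holds while $V \ge y/2$; once $V$ falls toward $0$ the drift weakens to $O(1)$, so this estimate cannot control the full excursion in which $V$ descends and then climbs back. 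You flag the correct fix at the end --- use the monotone decrease of $V$ in the interior and reduce to a controlled boundary configuration --- but you explicitly leave that step uncarried-out, and it is exactly the content of the lemma.

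The paper's proof makes this work by a finer split. First, it bounds $\prob_{(0, y/2)}\bigl(\tau^V_y < \tau^V_{g/(1+\gamma)}\bigr) \le e^{-2gy}$: here $V$ stays above $g/(1+\gamma)$, so the drift is uniformly negative and a Skorohod-type domination (in the spirit of your first route) does apply. Second, for the complementary event it applies the strong Markov property to reduce to the starting configuration $(0, g/(1+\gamma))$ and shows directly that from there the probability of reaching $-g/(1+\gamma)$ before $y$ is at least a constant $p_1(\gamma,g)p_2(\gamma,g) > 0$: it forces the gap to first reach level $1$ within time $1$ (with probability $p_1 > 0$), then uses that starting from gap $1$ and velocity in a \emph{bounded} interval the deterministic descent to $-g/(1+\gamma)$ takes a \emph{bounded} time $\gamma^{-1}\log(1+3\gamma)$ --- here, unlike in your argument, the descent time is genuinely independent of $y$ --- over which the gap stays positive with probability $p_2 > 0$. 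This is the ingredient you are missing.
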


\begin{proof}
For any $y>g/(1+\gamma)$, applying the strong Markov property at $\tau_1$, observe that 
\begin{multline}\label{bubble1}
    \prob_{(0,y)}\left(J_y > 1\right) = \prob_{(0,y)}\left(V_{\tau_1}=y/2, \ V_{\tau_2} = y \right)
    \leq \prob_{(0, y/2)}\left(\tau_y^V < \tau_{-\frac{g}{1+\gamma}}^V\right) \\ 
    \leq \prob_{(0, y/2)}\left(\tau_y^V < \tau_{\frac{g}{1+\gamma}}^V\right) + \prob_{(0, y)}\left(\tau^V_{\frac{g}{1+\gamma}} < \tau_y^V < \tau_{-\frac{g}{1+\gamma}}^V\right).
\end{multline}
We bound the second probability on the right hand side by applying the strong Markov property at the stopping time $\inf\left\lbrace t \ge \sigma\left(\tau^V_{\frac{g}{1+\gamma}}\right) \, \Big| \, V_t = g/(1+\gamma)\right\rbrace$ to obtain
$$
\prob_{(0, y)}\left(\tau^V_{\frac{g}{1+\gamma}} < \tau_y^V < \tau_{-\frac{g}{1+\gamma}}^V\right) \le \prob_{(0, \frac{g}{1+\gamma})}\left(\tau_y^V < \tau_{-\frac{g}{1+\gamma}}^V\right).
$$
Using this in \eqref{bubble1}, we obtain
\begin{equation}\label{bubble2}
\prob_{(0,y)}\left(J_y > 1\right)  \le \prob_{(0, y/2)}\left(\tau_y^V < \tau_{\frac{g}{1+\gamma}}^V\right) + \prob_{(0, \frac{g}{1+\gamma})}\left(\tau_y^V < \tau_{-\frac{g}{1+\gamma}}^V\right).
\end{equation}
To estimate the first probability on the right hand side of \eqref{bubble2}, note that if $V_0 = y/2$ and $t < \tau_y^V \wedge \tau_{\frac{g}{1+\gamma}}^V$, $$V_t \leq y/2 - t\left( \gamma \frac{g}{1+\gamma} + g\right) + \underset{s\leq t}{\sup}\left(B_s - s\frac{g}{1+\gamma} \right) \coloneqq y/2 + Z_t.$$
Arguing as in Lemma \ref{lemma:vel_hits_large_before_small},
\begin{multline}\label{bubble3}
    \prob_{(0, y/2)}\left(\tau_y^V < \tau_{\frac{g}{1+\gamma}}^V\right) 
    \leq \prob\left(y/2 + Z_t \quad \text{hits  } y \text{  before  } \frac{g}{1+\gamma} \right)\\
    \leq \prob\left(\underset{t < \infty}{\sup}\left(B_t - 2gt \right) > y/2 \right) = e^{-2gy}.
\end{multline}
To estimate the second probability in \eqref{bubble2}, observe that for any $y \ge 2g/(1+\gamma)$,
\begin{multline}\label{bubble4}
\prob_{(0, \frac{g}{1+\gamma})}\left(\tau_{-\frac{g}{1+\gamma}}^V < \tau_y^V\right) \ge \prob_{(0, \frac{g}{1+\gamma})}\left(\tau_{-\frac{g}{1+\gamma}}^V < \tau_{\frac{2g}{1+\gamma}}^V\right) \ge \prob_{(0, \frac{g}{1+\gamma})}\left(\tau^H_1 < \tau_{\frac{2g}{1+\gamma}}^V,  \ \tau^V_{(-g/\gamma, -g/(1+\gamma)]} \in \left[\tau^H_1, \tau_{\frac{2g}{1+\gamma}}^V\right] \right) \\
\ge \prob_{(0, \frac{g}{1+\gamma})}\left(\tau^H_1 < \tau_{\frac{2g}{1+\gamma}}^V\right) \times \inf_{v \in ( -g/(1+\gamma), 2g/(1+\gamma))}\mathbb{P}_{(1,v)}\left(\tau_{-\frac{g}{1+\gamma}}^V < \tau_{\frac{2g}{1+\gamma}}^V\right)
\end{multline}
where the last inequality follows by applying the strong Markov property at $\tau^H_1$. Recalling that for any $t \ge 0$
\begin{equation}\label{Hlow}
H_t\ge H_0 -\frac{gt}{\gamma}  - B_t + L_t \ge H_0 -\frac{gt}{\gamma}  - B_t,
\end{equation}
and for $V_0 = g/(1+\gamma)$, as $V_t >-g/\gamma$ for all $t$,
$$
V_t =V_0 - \int_0^t\left(\gamma V_s + g\right)ds + L_t \le \frac{g}{1+\gamma} + \sup_{u \le t}\left(-H_0 + B_u + \frac{gu}{\gamma}\right),
$$
we obtain
\begin{equation}\label{bubble5}
\prob_{(0, \frac{g}{1+\gamma})}\left(\tau^H_1 < \tau_{\frac{2g}{1+\gamma}}^V\right) \ge \mathbb{P}\left(B_1 \le -1 - g/\gamma, \ \sup_{u \le 1}\left(B_u + \frac{gu}{\gamma}\right) < \frac{g}{1+\gamma}\right) = p_1(\gamma,g)>0.
\end{equation}
Now we bound the second term in the product on the right hand side of \eqref{bubble4} from below. Choosing starting point $(H_0,V_0) = (1,v)$ for any $v \in ( -g/(1+\gamma), 2g/(1+\gamma))$, recall from \eqref{eqn:system_starts_interior} that for any $t>0$, if $\sigma(0)>t$, then
$$
V_t \le \left(\frac{2g}{1+\gamma} + g/\gamma\right)e^{-\gamma t} - g/\gamma.
$$
The right side equals $-g/(1+\gamma)$ when $t= \gamma^{-1}\log\left(1+3\gamma\right)$ and hence, if $\sigma(0) \ge \gamma^{-1}\log\left(1+3\gamma\right)$, then $\tau_{-\frac{g}{1+\gamma}}^V \le \gamma^{-1}\log\left(1+3\gamma\right)$. Thus, from \eqref{Hlow}, if $B_t \le \frac{1}{2} - \frac{gt}{\gamma}$ for all $t \le \gamma^{-1}\log\left(1+3\gamma\right)$, then $\sigma(0) \ge \gamma^{-1}\log\left(1+3\gamma\right)$ and thus $\sigma(0) \ge \tau_{-\frac{g}{1+\gamma}}^V$. As the velocity is strictly decreasing on the interval $[0,\sigma(0)]$, for any $v \in ( -g/(1+\gamma), 2g/(1+\gamma))$,
\begin{equation}\label{bubble6}
\mathbb{P}_{(1,v)}\left(\tau_{-\frac{g}{1+\gamma}}^V < \tau_{\frac{2g}{1+\gamma}}^V\right) \ge \mathbb{P}_{(1,v)}\left(\tau_{-\frac{g}{1+\gamma}}^V  \le \sigma(0) \right) \ge \mathbb{P}\left(\sup_{u \le\gamma^{-1}\log\left(1+3\gamma\right)}\left(B_u + \frac{gu}{\gamma}\right) \le \frac{1}{2}\right) = p_2(\gamma,g)>0.
\end{equation}
Using \eqref{bubble5} and \eqref{bubble6} in \eqref{bubble4}, we obtain
\begin{equation}\label{bubble7}
\prob_{(0, \frac{g}{1+\gamma})}\left(\tau_{-\frac{g}{1+\gamma}}^V < \tau_y^V\right) \ge p_1(\gamma,g)p_2(\gamma,g)>0
\end{equation}
Using \eqref{bubble3} and \eqref{bubble7} in \eqref{bubble2}
\begin{equation}\label{bubble8}
 \prob_{(0,y)}\left(J_y > 1\right) \le e^{-2gy} + (1- p_1(\gamma,g)p_2(\gamma,g)).
\end{equation}
Choosing any $p(\gamma,g) \in ((1- p_1(\gamma,g)p_2(\gamma,g)), 1)$ and $y'(\gamma,g) \ge 2g/(1+\gamma)$ such that the right hand side of \eqref{bubble8} is bounded above by $p(\gamma,g)$ for all $y \ge y'(\gamma,g)$, the assertion of the lemma follows for $n=1$.
The result for $n \ge 2$ follows by induction upon using the strong Markov property at $\tau_{2n-1}$.
\end{proof}

\begin{theorem} \label{thm:velocity_bounds} There exist positive constants $y'(\gamma, g), c, C$ such that
$$e^{-C} \> e^{-2(1+\gamma)\left(y+g/(1+\gamma)\right)^2} \leq \prob_{\mathsmaller{\renewalpt}}\left(\underset{[0, \zeta]}{\sup} \,\, V_t \geq y \right) \leq  e^{c}e^{-\frac{1+\gamma}{4}(y + g/(1+\gamma))^2} ,$$
for all $y > y'(\gamma, g)$.
\end{theorem}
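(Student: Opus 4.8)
The plan is to prove the upper and lower bounds separately, in both cases applying Lemmas~\ref{lemma:vel_hits_large_before_small} and~\ref{lemma:bubble_excursion_number} --- which are stated for a generic base level --- with base level $y/2$ rather than $y$.

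\medskip
\noindent\textbf{Upper bound.}
Since $V$ is continuous and $V_0 = -g/(1+\gamma) < 0 < y/2$ for $y$ large, if $\tau_{y/2}^V < \zeta$ then $V$ is increasing at $\tau_{y/2}^V$, so by~\eqref{eqn:increase_on_boundary_only} the process sits at $(0,y/2)$ there; moreover on this event one has $\tau_b^V < \tau_a^V$ necessarily (otherwise $\sup_{[0,\zeta]}V_t < b < y/2$) and $\zeta$ is the first return to $\renewalpt$ after $\tau_{y/2}^V$. The strong Markov property then gives
\[
\prob_{\renewalpt}\Bigl(\sup_{[0,\zeta]}V_t \geq y\Bigr)\ \leq\ \prob_{(0,y/2)}\bigl(\tau_y^V < \tau_{-g/(1+\gamma)}^V\bigr),
\]
using that, by~\eqref{eqn:increase_on_boundary_only}, once $V$ crosses $-g/(1+\gamma)$ downward it cannot exceed that level again before the renewal point is reached. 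Running the bubble decomposition~\eqref{eqn:def_bubble_stop_times} with $y/2$ in place of $y$, the right-hand event occurs iff one of the at-most-$J_{y/2}$ upward sub-excursions from $y/2$ reaches $y$ before falling back to $y/4$. For a single such sub-excursion, the Skorohod representation~\eqref{sm} gives $V_t \leq y/2 + \sup_{s\leq t}(B_s - sy/4) - t(\gamma y/4 + g)$ while $V\in[y/4,y]$, uniformly in the current gap, so the probability of hitting $y$ first is at most $\prob\bigl(\sup_{s}(B_s - s((1+\gamma)y/4 + g)) \geq y/2\bigr) = e^{-(1+\gamma)y^2/4 - gy}$, a gap-uniform strengthening of Lemma~\ref{lemma:vel_hits_large_before_small}. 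Combining with $\expec_{(0,y/2)}J_{y/2} \leq (1-p(\gamma,g))^{-1}$ (Lemma~\ref{lemma:bubble_excursion_number}) via a union bound over sub-excursions yields $\prob_{(0,y/2)}(\tau_y^V < \tau_{-g/(1+\gamma)}^V) \leq e^{c}\,e^{-\frac{1+\gamma}{4}(y + g/(1+\gamma))^2}$ for $y > y'(\gamma,g)$.

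\medskip
\noindent\textbf{Lower bound.}
Starting from $\renewalpt$ we have $H_0 = 0$, so by~\eqref{sm}, $L_t \geq B_t - \int_0^t V_s\,ds$, and inserting the a priori bound $V_s \leq y$ (valid for $t < \tau_y^V$) gives $V_t \geq -g/(1+\gamma) + \sup_{s\leq t}B_s - t((1+\gamma)y + g)$. Consequently, on a suitable event depending only on the Brownian path over a time window of order $1/(1+\gamma)$ --- on which $B$ stays within a fixed band of an affine function of slope of order $y$ --- the velocity is forced to follow a near-deterministic (Ornstein--Uhlenbeck-type) climb from $-g/(1+\gamma)$ up to $y$ without ever dipping back to $-g/(1+\gamma)$, so that $\tau_y^V < \zeta$. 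The probability of such a tube event is bounded below, via a Cameron--Martin change of measure, by $e^{-C}\,e^{-c_0(1+\gamma)(y+g/(1+\gamma))^2}$; choosing the window length so that the effective constant $c_0$ is strictly below $2$ --- the crude one-shot estimate using $\int_0^t V_s\,ds \leq ty$ already gives $c_0 = 2$, while exploiting that $V$ sits near $-g/(1+\gamma)$ early in the climb lowers it --- makes this comfortably imply the claimed bound $e^{-C'}e^{-2(1+\gamma)(y+g/(1+\gamma))^2}$ for $y$ large, the sub-Gaussian prefactor being absorbed.

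\medskip
\noindent\textbf{Main obstacle.}
The delicate points are all on the upper-bound side: (i) justifying the reduction to the fresh start $(0,y/2)$, in particular the claim that within a single renewal cycle $V$ cannot re-cross $-g/(1+\gamma)$ upward before the renewal point; and (ii) the gap-uniform versions of Lemmas~\ref{lemma:vel_hits_large_before_small} and~\ref{lemma:bubble_excursion_number}, needed because the gap at the start of a later sub-excursion may be arbitrarily large --- one argues, using the monotonicity of $L$ in the initial gap visible from~\eqref{sm}, that a larger gap only makes an upward velocity excursion less likely. On the lower-bound side the subtlety is purely in bookkeeping the constant in the exponent and the prefactor.
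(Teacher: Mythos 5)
Your upper bound mirrors the paper's argument essentially verbatim; the paper proves the estimate for $\prob_{\renewalpt}(\sup_{[0,\zeta]}V_t\geq 2y)$ by running the bubbles from $(0,y)$, while you run them from $(0,y/2)$ targeting $y$, which is the same thing under a relabeling. One correction: your ``main obstacle'' (ii) is not actually an obstacle. At each $\tau_{2k}$ on the event $\{J>k\}$ the velocity arrives at the base level from below, so by \eqref{eqn:increase_on_boundary_only} the gap is \emph{exactly zero} there; every sub-excursion restarts from the clean state $(0,\text{base level})$, and Lemmas \ref{lemma:vel_hits_large_before_small}--\ref{lemma:bubble_excursion_number} apply as stated without any gap-uniform extension. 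Your monotonicity observation is sound but unnecessary here.

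The lower bound is where you genuinely diverge from the paper, and your sketch has a real gap. From $V_t \geq -g/(1+\gamma)+\sup_{u\leq t}B_u - t((1+\gamma)y+g)$, the one-shot estimate at the optimal window $T = 1/(1+\gamma)$ produces \emph{exactly} the exponent $2(1+\gamma)(y+g/(1+\gamma))^2$, multiplied by a prefactor that decays polynomially in $y$ (from the Gaussian tail and the cone constraint). That prefactor cannot be hidden inside a fixed $e^{-C}$. You acknowledge this and assert that a sharper bookkeeping gives $c_0<2$, but you do not show how, and this is the crux. The paper bypasses the issue cleanly by \emph{not} starting the dominating process at the boundary value $-g/(1+\gamma)$: it first applies the strong Markov property at $\tau^V_b$ (with $b>-g/(1+\gamma)$ as in \eqref{eqn:renewal_def}), dominates $V_t\geq F_t := b - t((1+\gamma)y+g)+B_t$ on $t<\tau^V_y\wedge\tau^V_{-g/(1+\gamma)}$, and applies optional stopping with the scale function $s(v)=e^{2v((1+\gamma)y+g)}$ to get
\[
\prob_{(0,b)}\bigl(\tau^V_y<\tau^V_{-g/(1+\gamma)}\bigr)\ \geq\ \frac{s(b)-s(-g/(1+\gamma))}{s(y)-s(-g/(1+\gamma))}.
\]
Because $b>-g/(1+\gamma)$, the resulting exponent is $2(y-b)\bigl((1+\gamma)y+g\bigr)$, which is smaller than $2(1+\gamma)(y+g/(1+\gamma))^2$ by $\Theta(y)$; that linear-in-$y$ slack is what absorbs prefactors, and it comes precisely from launching $F$ from the interior point $b$ rather than from the boundary $-g/(1+\gamma)$. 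If you prefer the tube/Cameron--Martin route you can make it work, but you must first restart at $(0,b)$ (or otherwise manufacture the slack); running the argument directly from $\renewalpt$ as you do cannot yield the stated constant prefactor.
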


\begin{proof} Consider $\{\tau_i \}_{i \geq 1}$ and $J_y$ defined in \eqref{eqn:def_bubble_stop_times}. Then, choosing $y'(\gamma,g)$ to be the same constant as in Lemma \ref{lemma:bubble_excursion_number}, for any $y > y'(\gamma,g)$, we apply the strong Markov property at stopping time $\tau^V_y$ to obtain
\begin{multline} \label{eqn:vel_upper_bd}
    \prob_{\mathsmaller{\renewalpt}}\left(\underset{[0, \zeta]}{\sup} \,\, V_t \geq 2y \right) \le \mathbb{P}_{(0,y)}\left(\tau^V_{2y} < \tau^V_{-g/(1+\gamma)}\right)
    \leq \sum_{k=0}^\infty \prob_{(0,y)}\left(\underset{[\tau_{2k}, \tau_{2k+1}]}{\sup} \,\, V_t \geq 2y, \,\, J_y > k \right).   
\end{multline}
By applying the strong Markov property at time $\tau_{2k}$,
$$
\prob_{(0,y)}\left(\underset{[\tau_{2k}, \tau_{2k+1}]}{\sup} \,\, V_t \geq 2y, \,\, J_y > k \right) = \prob_{(0, y)}\left(\tau_{2y}^V < \tau^V_{y/2}\right)\prob_{(0,y)}\left(J_y > k \right).
$$
Using this in \eqref{eqn:vel_upper_bd}, we obtain
$$
\prob_{\mathsmaller{\renewalpt}}\left(\underset{[0, \zeta]}{\sup} \,\, V_t \geq 2y \right) \le \prob_{(0, y)}\left(\tau_{2y}^V < \tau^V_{y/2}\right) \,\, \expec_{\mathsmaller{\renewalpt}} \> \left(J_y\right)
$$
from which the upper bound claimed in the theorem (with $2y$ in place of $y$) follows for $y > y'(\gamma, g)$ from Lemmas \ref{lemma:vel_hits_large_before_small} and \ref{lemma:bubble_excursion_number}.

For the lower bound, we first show for $b = -\frac{g - \frac{g}{2(1+\gamma)}}{1+ \gamma}$, as in the definition of $\zeta$, that for $y$ sufficiently large,
\begin{equation} \label{eqn:vel_lower_bd_interim0}
    \prob_{(0, b)}\left(\tau_y^V < \tau^V_{-\frac{g}{1+\gamma}} \right) \geq  e^{-2(1+\gamma)\left(y+\frac{g}{1+\gamma}\right)^2}.
\end{equation}
Choosing the starting point $(H_0,V_0) = (0,b)$, for $t < \tau^V_{-\frac{g}{1+\gamma}} \wedge \tau^V_y$, we have 
$$
V_t = b  - \int_0^t\left(\gamma V_s + g\right)ds + L_t \ge b - (\gamma y + g)t + \sup_{u \le t}\left(B_u - yu\right) \geq b - t\left(\left(1 + \gamma \right)y + g\right) + B_t \coloneqq F_t .
$$ 
We also note that if $s(v) = e^{2v\left(\left(1 + \gamma \right)y + g\right)}$, then $s(F_t)$ is a bounded martingale on $t < \tau^V_{-\frac{g}{1+\gamma}} \wedge \tau^V_y$. By the optional stopping theorem,
\begin{equation}\label{eqn:vel_lower_bd_interim}
    \prob_{(0, b)}\left(\tau_y^V < \tau^V_{-\frac{g}{1+\gamma}} \right) \geq \prob_{(0, b)}\left(F_t\text{ hits } y \text{ before  } -\frac{g}{1+\gamma} \right)
    = \frac{s(b) - s\left(-\frac{g}{1+\gamma}\right)}{s(y) - s\left(-\frac{g}{1+\gamma}\right)} \geq  e^{-2(1+\gamma)\left(y+\frac{g}{1+\gamma}\right)^2}
\end{equation}
for $y$ sufficiently large. This proves \eqref{eqn:vel_lower_bd_interim0}. Recall $a = - \frac{g + \frac{g}{2\gamma}}{1+\gamma} < -\frac{g}{1+\gamma}$ in the definition of $\zeta$. Note that if $\tau_a^V < \tau_b^V$, then the renewal time $\zeta$ corresponds to the first hitting time of the level $-g/(1+\gamma)$ by the velocity after time $\tau_a^V$ and hence, $\tau^V_y > \zeta$. Using this observation and the strong Markov property at $\tau^V_b$, we obtain
\begin{equation} \label{eqn:vel_lower_bd_interim2}
    \prob_{\mathsmaller{\renewalpt}}\left(\underset{[0, \zeta]}{\sup} \,\, V_t \geq y \right) = \prob_{\mathsmaller{\renewalpt}}\left(\tau^V_b < \tau^V_a \right) \> \prob_{(0, b)}\left(\tau_y^V < \tau^V_{-\frac{g}{1+\gamma}} \right).
\end{equation}
The lower bound in the theorem follows from \eqref{eqn:vel_lower_bd_interim0} and \eqref{eqn:vel_lower_bd_interim2}, with $C = -\log \prob_{\mathsmaller{\renewalpt}}\left(\tau^V_b < \tau^V_a \right)$. $C > 0$ is finite via arguments similar to those of \eqref{eqn:vel_lower_bd_interim}.
\end{proof}


\subsection{Gap bounds}

To bound tail probabilities of $H$, we split the path of the process by hitting times of the velocity. Doing so allows the gap to be controlled by Brownian motion with drift and its running maximum.

\begin{lemma} \label{lemma:gap_increases_before_zero} For any $x>0$,
$$
 \underset{\nu \in \left(-g/\gamma, \frac{\gamma x}{4} - g/\gamma\right] }{\sup} \> \prob_{(x/2, \nu)}\left(\tau_x^H < \sigma(0) \right) \leq   \, \exp\left\{- x g/2\gamma  \right\}
$$
and
$$
 \underset{h \geq (\gamma / 2g)\log(2), \> \nu > -g/\gamma }{\inf} \> \prob_{(h, \nu)}\left(\tau_x^H < \sigma(0) \right) \ge  \exp\left\{- 2x \> g/\gamma \right\}.
$$
\end{lemma}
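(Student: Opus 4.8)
The plan is to exploit that on the interval $[0,\sigma(0))$ the local time $L$ is identically zero, so that $(H,V)$ evolves deterministically in the velocity coordinate and as a Brownian motion with a bounded drift in the gap coordinate. Concretely, by \eqref{eqn:system_starts_interior} and \eqref{eqn:velocity_lower_bounded}, for $t<\sigma(0)$ we have $V_t=(\nu+g/\gamma)e^{-\gamma t}-g/\gamma$ and
\[
H_t = H_0 + \frac{\nu - V_t}{\gamma} - \frac{gt}{\gamma} - B_t,\qquad 0\le \nu-V_t=(\nu+g/\gamma)(1-e^{-\gamma t})\le \nu+g/\gamma,
\]
the last bound using $\nu+g/\gamma>0$. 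Both inequalities of the lemma will then follow by sandwiching $H$ on $[0,\sigma(0))$ between Brownian motions with drift $-g/\gamma$ and reading off standard exit/extremum probabilities.

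For the upper bound I would start the system at $(x/2,\nu)$ with $\nu\le \gamma x/4-g/\gamma$, so that $(\nu-V_t)/\gamma\le (\nu+g/\gamma)/\gamma\le x/4$ and hence $H_t\le 3x/4+(-B_t-gt/\gamma)$ for all $t<\sigma(0)$. Thus the event $\{\tau^H_x<\sigma(0)\}$ forces $\sup_{s\ge 0}(-B_s-gs/\gamma)\ge x/4$. Since $-B$ is a standard Brownian motion and $\sup_{s\ge0}(W_s-cs)$ is $\mathrm{Exponential}(2c)$ for $c>0$ (Chapter 3.5 of \cite{kar}), the probability of this event is $e^{-2(g/\gamma)(x/4)}=e^{-xg/2\gamma}$, uniformly over the admissible range of $\nu$, giving the first inequality.

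For the lower bound I would fix $h\ge (\gamma/2g)\log 2$ and $\nu>-g/\gamma$, and set $G_t:=h-B_t-gt/\gamma$, a Brownian motion with drift $-g/\gamma$ started at $h$ and not depending on $\nu$. Since $\nu-V_t\ge 0$, the display above yields $H_t\ge G_t$ for $t<\sigma(0)$. I claim this pathwise domination gives $\{\text{$G$ hits $x$ before $0$}\}\subseteq\{\tau^H_x<\sigma(0)\}$ when $h<x$ (the case $h\ge x$ being immediate from continuity of $H$): if $\sigma(0)$ occurred no later than the first time $G$ reaches $x$, then by continuity $0=H_{\sigma(0)}\ge G_{\sigma(0)}$, so $G$ would have hit $0$ before reaching $x$, contradicting the event; hence $G$ reaches $x$ strictly before $\sigma(0)$, at which time $H\ge x$, so by continuity $H$ has already hit $x$. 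Therefore $\prob_{(h,\nu)}(\tau^H_x<\sigma(0))\ge \prob(\text{$G$ hits $x$ before $0$})$. Using the scale function $s(v)=e^{2gv/\gamma}$ of $G$ (for which $s(G_t)$ is the exponential martingale $\mathcal E((2g/\gamma)W)_t$ up to a constant, hence bounded when stopped at $\tau_0^G\wedge\tau_x^G$) and optional stopping, the right-hand side equals $(e^{2gh/\gamma}-1)/(e^{2gx/\gamma}-1)$, which is $\ge e^{-2xg/\gamma}$ since $h\ge(\gamma/2g)\log 2$ makes $e^{2gh/\gamma}-1\ge 1$ while $e^{2gx/\gamma}-1\le e^{2gx/\gamma}$.

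I expect the only genuinely delicate point to be the comparison step in the lower bound: converting the pathwise inequality $H_t\ge G_t$, which is only known to hold up to $\sigma(0)$, into the stated containment of events, which needs the short contradiction argument above to rule out $\sigma(0)$ arriving prematurely. Everything else is a routine use of the explicit pre-$\sigma(0)$ representation of $(H,V)$ together with standard Brownian exit and running-maximum formulas.
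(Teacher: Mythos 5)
Your proof is correct and follows the same strategy as the paper's: dominate $H$ on $[0,\sigma(0))$ above and below by Brownian motions with drift $-g/\gamma$ using the explicit pre-$\sigma(0)$ representation \eqref{eqn:system_starts_interior} together with $V_t>-g/\gamma$, then read off hitting probabilities. The only minor difference is that for the upper bound you invoke the exponential law of $\sup_{s\ge 0}\left(-B_s-\tfrac{g}{\gamma}s\right)$ whereas the paper applies the scale function $s(v)=e^{2gv/\gamma}$ and optional stopping to the two-sided exit problem; both yield the same bound $e^{-xg/2\gamma}$, and your containment argument in the lower bound simply makes explicit a step the paper leaves implicit.
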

\begin{proof} Take any $x>0$. On $t < \tau_x^H \wedge \sigma(0)$, with $(H_0,V_0) = (h,\nu)$, $H_t$ is dominated by $h + \frac{\nu}{\gamma} + \frac{g}{\gamma^2} - B_t - t g/\gamma$, using \eqref{eqn:system_starts_interior}. For the upper bound, use $h = x/2$. The function $s(x) = e^{2\frac{g}{\gamma}x}$ makes $s(z - B_t - \frac{g}{\gamma}t )$ a martingale for any $z$. By the optional stopping theorem, 
\begin{multline*}
\prob_{(x/2, \nu)}\left(\tau_x^H < \sigma(0) \right) \leq     \prob \left(x/2 + \frac{\nu}{\gamma} + \frac{g}{\gamma^2} - B_t - \frac{g}{\gamma}t  \text{  hits  } x \text{  before  } 0  \right)
    = \frac{s\left(x/2 + \frac{\nu}{\gamma} + \frac{g}{\gamma^2}\right) - 1}{s\left(x\right) - 1}  \\
    \leq  \, \exp\left\{- x g/2\gamma \right\},
\end{multline*}
for all $\nu \leq \frac{\gamma x}{4} - \frac{g}{\gamma}$. To prove the lower bound first note that if $h > x$, then $\tau_x^H < \sigma(0)$. For $h \in [(\gamma / 2g)\log(2), \> x]$ and $t \le \sigma(0)$, noting $\nu - V_t \ge 0$ (by \eqref{eqn:increase_on_boundary_only}), we use \eqref{eqn:system_starts_interior} to calculate,
$$
H_t = h + \frac{\nu}{\gamma} -\frac{V_t}{\gamma} - B_t - \frac{g}{\gamma}t \ge (\gamma/2g)\log 2- B_t - \frac{g}{\gamma}t.
$$
The optional stopping theorem again gives,
\begin{multline*}
    \prob_{(h, \nu)}\left(\tau_x^H < \sigma(0)\right) 
    \geq \prob\left((\gamma/2g)\log 2 - B_t - \frac{g}{\gamma}t  \text{  hits  } x \text{  before  } 0 \right)
    = \frac{s\left((\gamma/2g)\log 2\right) - 1}{s\left(x\right) - 1} \\
    = \frac{\exp\left\{-2x \> g/\gamma \right\}}{1 - \exp\left\{-2x \> g/\gamma \right\}}
    \geq \exp\left\{-2x \> g/\gamma\right\}.
\end{multline*}
\end{proof}
\begin{lemma} \label{lemma:gap_hits_zero_before_vel_large}
Fix $a > -g/\gamma$. For any $x>0$,
$$\underset{\nu \in \left[a, \left(a+\frac{g}{\gamma}\right)e^{\gamma^2x/(4g)} - \frac{g}{\gamma}\right]}{\sup} \>\prob_{(x/2, \nu)}\left(\sigma(0) < \tau_a^V  \right) \leq \, \frac{2\sqrt{2\gamma}}{\sqrt{\pi g x}}\exp\left\{-x \> g/8\gamma\right\}.$$
\end{lemma}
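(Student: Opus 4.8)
The plan is to show that the event $\{\sigma(0) < \tau_a^V\}$ forces $\sigma(0)$ to be small — bounded by $\gamma x/(4g)$ — then to note that $\sigma(0)$ being small forces the driving Brownian motion $B$ to have already reached a high level, and finally to estimate that probability via the reflection principle and a Gaussian tail bound.

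First I would fix $\nu$ in the stated interval and run the process from $(H_0, V_0) = (x/2, \nu) \in S^\circ$. On $[0, \sigma(0))$ the gap stays positive, so the local time is frozen and the velocity evolves deterministically by \eqref{eqn:system_starts_interior}; in particular \eqref{eqn:hitting_time_velocity} gives $\tau_a^V = \frac{1}{\gamma}\log\!\big(\tfrac{\nu+g/\gamma}{a+g/\gamma}\big)$ whenever the right-hand side does not exceed $\sigma(0)$. Hence on $\{\sigma(0) < \tau_a^V\}$ one necessarily has $\sigma(0) < \frac{1}{\gamma}\log\!\big(\tfrac{\nu+g/\gamma}{a+g/\gamma}\big)$, and the upper endpoint $\big(a+\tfrac{g}{\gamma}\big)e^{\gamma^2 x/(4g)} - \tfrac{g}{\gamma}$ of the admissible $\nu$-range is exactly the value making $\frac{1}{\gamma}\log\!\big(\tfrac{\nu+g/\gamma}{a+g/\gamma}\big) \le \tfrac{\gamma x}{4g}$. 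Therefore $\{\sigma(0) < \tau_a^V\} \subseteq \{\sigma(0) < \gamma x/(4g)\}$, uniformly over such $\nu$.

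Next, integrating \eqref{eqn:system} and using $L_t \ge 0$ together with the pointwise bound $V_s > -g/\gamma$ from \eqref{eqn:velocity_lower_bounded} yields $H_t \ge x/2 - gt/\gamma - B_t$ for all $t \ge 0$. Evaluating at $t = \sigma(0)$, where $H$ vanishes, forces $B_{\sigma(0)} \ge x/2 - g\sigma(0)/\gamma$, which exceeds $x/4$ on the event $\{\sigma(0) < \gamma x/(4g)\}$; so on that event $\sup_{0 \le t \le \gamma x/(4g)} B_t > x/4$. Combining with the previous paragraph, $\prob_{(x/2,\nu)}(\sigma(0) < \tau_a^V) \le \prob\!\big(\sup_{0 \le t \le \gamma x/(4g)} B_t > x/4\big)$, and by the reflection principle this equals $2\,\prob\!\big(B_{\gamma x/(4g)} > x/4\big) = 2\,\prob\!\big(Z > \tfrac12\sqrt{gx/\gamma}\,\big)$ for a standard normal $Z$. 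Applying $\prob(Z > z) \le (z\sqrt{2\pi})^{-1}e^{-z^2/2}$ with $z = \tfrac12\sqrt{gx/\gamma}$, for which $z^2/2 = gx/(8\gamma)$ and $z^{-1} = 2\sqrt{\gamma/(gx)}$, produces precisely the bound $\tfrac{2\sqrt{2\gamma}}{\sqrt{\pi g x}}\exp\{-gx/(8\gamma)\}$; since every step was uniform in $\nu$, the supremum over the interval costs nothing.

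The one place that needs care is the reduction in the second paragraph: one must combine the deterministic evolution of $V$ on $[0, \sigma(0))$ with the exact form of the upper endpoint of the $\nu$-interval to convert ``$\sigma(0)$ occurs before $\tau_a^V$'' into the clean, $\nu$-free time bound ``$\sigma(0) < \gamma x/(4g)$''. Once that is in place the rest is a routine reflection-principle and Mills-ratio computation, with the constants matching the asserted expression exactly.
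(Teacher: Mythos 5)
Your proof is correct and uses essentially the same ingredients as the paper's: the path bound $H_t \ge x/2 - gt/\gamma - B_t$ via $S_t \ge -gt/\gamma$, the deterministic velocity evolution in $S^\circ$ together with \eqref{eqn:hitting_time_velocity} to tie the $\nu$-window to the time threshold $\gamma x/(4g)$, and then the reflection principle with a Gaussian tail bound. The only difference is directional: the paper argues contrapositively (if $B_t < x/4$ on $[0,\gamma x/(4g)]$, then $\sigma(0) > \gamma x/(4g)$, hence $\tau_a^V$ occurs first), whereas you argue forward from the event $\{\sigma(0) < \tau_a^V\}$ to force $\sigma(0) < \gamma x/(4g)$ and hence $B_{\sigma(0)} > x/4$; the two are logically equivalent and the constants match exactly.
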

\begin{proof}
Observe that for any $x>0$ and any $\nu >  -g/\gamma$, when $(H_0,V_0) = (x/2, \nu)$ we obtain from \eqref{eqn:system},
$$
H_t = H_0 + S_t - B_t + L_t \ge \frac{x}{2} - \frac{g}{\gamma}t - B_t,
$$
where we used $S_t = \int_0^t V_u du \ge -gt/\gamma$ for all $t \ge 0$. From this bound, we conclude that $H_t \ge \frac{x}{4} - B_t$ for all $t \le \frac{\gamma x}{4g}$. Thus, if $B_t < x/4$ for all $t \le \frac{\gamma x}{4g}$, then $\sigma(0) > \frac{\gamma x}{4g}$. In particular, along with \eqref{eqn:system_starts_interior}, this implies that if $t \leq \frac{\gamma x}{4g}$, $V_t = (\nu + g/\gamma ) e^{-\gamma t} - g/\gamma$. The right hand side of this equation equals $a$ when $t = \gamma^{-1}\log\left(\frac{\nu + \frac{g}{\gamma}}{a+\frac{g}{\gamma}}\right)$. If $\nu \in \left[a, \left(a+\frac{g}{\gamma}\right)e^{\gamma^2x/(4g)} - \frac{g}{\gamma}\right]$, then $\gamma^{-1}\log\left(\frac{\nu + \frac{g}{\gamma}}{a+\frac{g}{\gamma}}\right) \le \frac{\gamma x}{4g}$. We conclude that for $x>0$ and $\nu \in \left[a, \left(a+\frac{g}{\gamma}\right)e^{\gamma^2x/(4g)} - \frac{g}{\gamma}\right]$, if $B_t < x/4$ for all $t \le \frac{\gamma x}{4g}$, then $\tau^V_a <\sigma(0)$. Consequently,
$$
\underset{\nu \in \left[a, \left(a+\frac{g}{\gamma}\right)e^{\gamma^2x/(4g)} - \frac{g}{\gamma}\right]}{\sup} \>\prob_{(x/2, \nu)}\left(\sigma(0) < \tau_a^V  \right) \leq \mathbb{P}\left(\sup_{t \le \frac{\gamma x}{4g}}B_t \ge x/4\right) \le \frac{2\sqrt{2\gamma}}{\sqrt{\pi g x}}\exp\left\{-xg/8\gamma\right\}.
$$
\end{proof}

\begin{theorem}  \label{thm:gap_bounds} There exist positive constants $x'(\gamma, g), C, C'$ such that
$$e^{-C'} \exp\left\{- 2xg/\gamma \right\} \leq \prob_{\mathsmaller{\renewalpt}}\left(\tau^H_{x} < \zeta\right) \leq e^{C} \,\, \exp\left\{- xg/16\gamma \right\},$$
for all $x > x'(\gamma, g)$.
\end{theorem}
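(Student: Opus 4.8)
The two inequalities are proved by separate arguments.

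For the upper bound, the plan is to reduce the event $\{\tau_x^H<\zeta\}$ to a single excursion of the gap above level $x/2$. Fix $y_x:=\sqrt{gx/(4\gamma(1+\gamma))}$ and split $\prob_{\mathsmaller{\renewalpt}}(\tau_x^H<\zeta)\le \prob_{\mathsmaller{\renewalpt}}(\sup_{[0,\zeta]}V_t\ge y_x)+\prob_{\mathsmaller{\renewalpt}}(\tau_x^H<\zeta,\ \sup_{[0,\zeta]}V_t< y_x)$. By Theorem \ref{thm:velocity_bounds} the first term is at most $e^{c}e^{-\frac{1+\gamma}{4}y_x^2}=e^{c}e^{-gx/16\gamma}$ once $y_x>y'(\gamma,g)$, i.e. for $x$ large (this term is only relevant when $\tau_b^V<\tau_a^V$, since $\sup_{[0,\zeta]}V_t\le b$ on $\{\tau_a^V<\tau_b^V\}$). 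For the second term, introduce $T_0=\tau_{x/2}^H$, $R_i=\sigma(T_i)$, $T_{i+1}=\inf\{t\ge R_i:H_t=x/2\}$, and let $M=\#\{i\ge 0:T_i<\zeta\}$ count the excursions of $H$ above $x/2$ that begin before $\zeta$. If $\tau_x^H<\zeta$ then some such excursion reaches $x$; and on $\{\sup_{[0,\zeta]}V_t<y_x\}$ each $V_{T_i}$ with $T_i<\zeta$ lies in $(-g/\gamma,\ \gamma x/4-g/\gamma]$ (again for $x$ large), so a union bound over $i$ and the strong Markov property at each $T_i$, together with the upper bound of Lemma \ref{lemma:gap_increases_before_zero}, give $\prob_{\mathsmaller{\renewalpt}}(\tau_x^H<\zeta,\ \sup_{[0,\zeta]}V_t<y_x)\le e^{-xg/2\gamma}\,\expec_{\mathsmaller{\renewalpt}}(M)$.

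It remains to bound $\expec_{\mathsmaller{\renewalpt}}(M)$; in fact it tends to $0$ as $x\to\infty$. The mechanism is the estimate behind Lemma \ref{lemma:gap_hits_zero_before_vel_large}: from $H=x/2$ one has $H_t\ge x/4-B_t$ for $t\le \gamma x/(4g)$, so any excursion of $H$ reaching $x/2$ has length at least $\gamma x/(4g)$ except on an event of probability $q_x\le e^{-xg/8\gamma}$, uniformly in the starting velocity. Split $M$ into such ``long'' excursions and the rest. The long ones are disjoint subintervals of $[0,\zeta]$, hence number at most $4g\zeta/(\gamma x)$, while the expected number of short ones is at most $q_x\,\expec_{\mathsmaller{\renewalpt}}(M)$ by the strong Markov property at each $T_i$. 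Since $\expec_{\mathsmaller{\renewalpt}}(\zeta)<\infty$ by Theorem \ref{thm:renewal}, this gives $\expec_{\mathsmaller{\renewalpt}}(M)(1-q_x)\le 4g\,\expec_{\mathsmaller{\renewalpt}}(\zeta)/(\gamma x)$, so $\expec_{\mathsmaller{\renewalpt}}(M)\le 1$ for $x$ large. Altogether $\prob_{\mathsmaller{\renewalpt}}(\tau_x^H<\zeta)\le e^{c}e^{-gx/16\gamma}+e^{-xg/2\gamma}\le e^{C}e^{-xg/16\gamma}$ for $x>x'(\gamma,g)$.

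For the lower bound it is enough to exhibit one favourable scenario of probability at least $e^{-C'}e^{-2xg/\gamma}$. Let $\delta:=(\gamma/2g)\log 2$, a fixed level. First, $p_*(\gamma,g):=\prob_{\mathsmaller{\renewalpt}}(\tau_\delta^H<\tau_a^V\wedge\tau_b^V)>0$: on a horizon $t_1$ shorter than the deterministic time for the free velocity started at $-g/(1+\gamma)$ to decay to $a$, one has $\tau_a^V>t_1$ automatically, while a Brownian path with $\sup_{s\le t_1}B_s$ so small that $L_{t_1}$ cannot lift the velocity to $b$, and with $-B_{t_1}\ge\delta+gt_1/\gamma$ (which forces $H_{t_1}\ge -gt_1/\gamma-B_{t_1}\ge\delta$), occurs with positive probability. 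On $\{\tau_\delta^H<\tau_a^V\wedge\tau_b^V\}$ we have $\tau_\delta^H<\zeta$, hence, as $\zeta$ is an instant with $H=0$, also $\sigma(\tau_\delta^H)\le\zeta$; so it suffices that the gap, restarted from $(\delta,\nu)$ at time $\tau_\delta^H$, reach $x$ before returning to $0$. By the strong Markov property at $\tau_\delta^H$ and the lower bound of Lemma \ref{lemma:gap_increases_before_zero} (whose hypotheses hold at $(\delta,\nu)$, since $\delta=(\gamma/2g)\log 2$ and $\nu>-g/\gamma$) this has probability at least $e^{-2xg/\gamma}$, so $\prob_{\mathsmaller{\renewalpt}}(\tau_x^H<\zeta)\ge p_*(\gamma,g)\,e^{-2xg/\gamma}$ and we may take $C'=-\log p_*(\gamma,g)$.

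The delicate point is the control of $\expec_{\mathsmaller{\renewalpt}}(M)$ in the upper bound: a priori the gap could make many excursions above $x/2$ before renewing, and the velocity at the start of such an excursion need not be bounded by a constant — which is exactly why Theorem \ref{thm:velocity_bounds} (to confine the velocity below $y_x$) and the long-duration estimate underlying Lemma \ref{lemma:gap_hits_zero_before_vel_large} (to bound the number of such excursions through $\expec_{\mathsmaller{\renewalpt}}(\zeta)<\infty$) are both needed. The remaining steps are routine applications of optional stopping and the strong Markov property to the excursion estimates already established.
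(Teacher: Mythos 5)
Your lower bound is essentially the same as the paper's: reach a small fixed level $\delta=(\gamma/2g)\log 2$ before the velocity leaves $(a,b)$ with positive probability $p_*$, then apply the lower bound of Lemma \ref{lemma:gap_increases_before_zero} via the strong Markov property at $\tau_\delta^H$. Your upper bound, however, takes a genuinely different route. The paper decomposes $[0,\zeta]$ by velocity hitting times, reusing the $\alpha_j$/$N^-$ machinery from Section \ref{strep}, bounds the probability that $\tau_x^H$ falls in each sub-interval, and sums using $\expec_{\mathsmaller{\renewalpt}} N^- \le e^{C'}$; this forces a case split over $\tau_a^V<\tau_b^V$ vs.\ $\tau_b^V<\tau_a^V$. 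You instead decompose by excursions of $H$ above $x/2$ and bound $\expec_{\mathsmaller{\renewalpt}}(M)$ directly: ``long'' excursions (duration $\ge\gamma x/(4g)$, coming from the same reflected-Brownian-motion-with-drift $-g/\gamma$ estimate that underlies Lemma \ref{lemma:gap_hits_zero_before_vel_large}) are disjoint subintervals of $[0,\zeta]$ and so number at most $4g\zeta/(\gamma x)$, while ``short'' ones are rare. Combined with $\expec_{\mathsmaller{\renewalpt}}(\zeta)<\infty$ from Theorem \ref{thm:renewal}, this yields $\expec_{\mathsmaller{\renewalpt}}(M)=O(1/x)$, which is more than enough. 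Both proofs use Theorem \ref{thm:velocity_bounds} to confine the velocity below a level of order $\sqrt{x}$ so that the upper bound of Lemma \ref{lemma:gap_increases_before_zero} applies, and both use that lemma as the core per-excursion estimate. Your version is arguably cleaner: it avoids the case analysis and makes the role of $\expec_{\mathsmaller{\renewalpt}}(\zeta)<\infty$ explicit; in exchange you get a polynomial rather than geometric tail on the excursion count, which costs nothing here. One routine point worth spelling out: to rearrange $(1-q_x)\expec_{\mathsmaller{\renewalpt}}(M)\le 4g\,\expec_{\mathsmaller{\renewalpt}}(\zeta)/(\gamma x)$ you should first work with the truncated counts $M^{(k)}=\#\{i<k:T_i<\zeta\}$, which are a priori finite, and let $k\to\infty$ by monotone convergence.
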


\begin{proof}  Theorem \ref{thm:velocity_bounds} shows there exist positive constants $x_0(\gamma, g)$, $C, C'$ such that
\begin{equation} \label{eqn:gap_bounds_interim1}
    \prob_{\mathsmaller{\renewalpt}}\left(\tau^V_{\sqrt{x\frac{g}{\gamma(1+\gamma)}}} < \zeta \right) \leq e^C \, e^{-\frac{1+\gamma}{4}\left(\sqrt{x\frac{g}{\gamma(1+\gamma)}} + g/(1+\gamma) \right)^2} \leq  e^{C'} \, e^{-xg/4\gamma},
\end{equation}
for $x > x_0(\gamma, g)$. The union bound then gives,
\begin{multline}\label{eqn:gap_bounds_interim2}
    \prob_{\mathsmaller{\renewalpt}}\left(\tau_x^H < \zeta \right) \leq \prob_{\mathsmaller{\renewalpt}}\left(\tau_x^H < \zeta < \tau^V_{\sqrt{x\frac{g}{\gamma(1+\gamma)}}}  \right) + \prob_{\mathsmaller{\renewalpt}}\left(\tau^V_{\sqrt{x\frac{g}{\gamma(1+\gamma)}}} < \zeta\right)\\
    \leq \prob_{\mathsmaller{\renewalpt}}\left(\tau_x^H < \zeta < \tau^V_{\sqrt{x\frac{g}{\gamma(1+\gamma)}}} \right) + e^C\> e^{-xg/4\gamma}.
\end{multline}
Fix $a = -\left(g + g/2\gamma \right)/(1+\gamma)$ and $b = -\left(g - g/2(1+\gamma) \right)/(1+\gamma)$, as in the definition \eqref{eqn:renewal_def} of $\zeta$. Choose $x_0(\gamma, g)$ large enough that $\left(a+\frac{g}{\gamma}\right)e^{\gamma^2x/(4g)} - \frac{g}{\gamma} > \frac{\gamma x}{4} - \frac{g}{\gamma} > \sqrt{x\frac{g}{\gamma(1+\gamma)}}$ for all $x > x_0(\gamma, g)$. The strong Markov property at $\tau_{x/2}^H$ and Lemmas \ref{lemma:gap_increases_before_zero} and \ref{lemma:gap_hits_zero_before_vel_large} show there exist constants $x_1(\gamma, g) > x_0(\gamma, g)$ and $C > 0$ such that for $x > x_1(\gamma, g)$,
\begin{multline}\label{eqn:gap_bounds_interim3}
    \underset{\nu \in \left[a, 0\right]}{\sup}\>\prob_{(0, \nu)}\left(\tau_x^H \leq \tau_a^V \wedge \tau_{\sqrt{x \frac{g}{\gamma(1+\gamma)}}}^V\right) \leq \underset{\nu \in \left[a, \sqrt{x\frac{g}{\gamma(1+\gamma)}}\right]}{\sup}\>\prob_{(x/2, \nu)}\left(\tau_x^H \leq \tau_a^V\right)\\
    \leq  \underset{\nu \in \left[a, \left(a+\frac{g}{\gamma}\right)e^{\gamma^2x/(4g)} - \frac{g}{\gamma}\right]}{\sup}\> \prob_{(x/2, \nu)}\left(\sigma(0) < \tau_x^H \leq \tau_a^V\right) + \underset{\nu \in \left[a,\frac{\gamma x}{4} - \frac{g}{\gamma}\right]}{\sup}\> \prob_{(x/2, \nu)}\left(\tau_x^H < \sigma(0)\right)\\
    \leq e^C \> e^{-xg/8\gamma}.
\end{multline}
Fix $x > x_1(\gamma, g)$. We define slight modifications of the stopping times given in \eqref{eqn:def_stop_times_below}, and therefore we use the same notation. Define $\alpha_{-1} = 0$ and $\alpha_0 =  \tau_a^V \wedge \tau^V_b$. If $\tau_b^V < \tau_a^V$, define $\alpha_j = \alpha_0$ for all $k \ge 0$ and $N^{-} = 0$. For $k \ge 0$, if $V_{\alpha_{3k}} = a$, 
\begin{eqnarray}\label{eqn:gap_bounds_interim_times}
&&\alpha_{3k + 1} = \inf\left\{t \geq \alpha_{3k}\>|\> H_t \leq x/2\right\}, \nonumber \\
&&\alpha_{3k+2}, \alpha_{3k+3} \quad \text{defined exactly as in } \eqref{eqn:def_stop_times_below}.
\end{eqnarray}
If $V_{\alpha_{3k}} = -g/(1+\gamma)$ then $\alpha_j = \alpha_{3k}$ for all $j \geq 3k$. As before, set $ N^- = \inf \left\{k \ge 1\>|\> V_{\alpha_{3k}} = -g/(1+\gamma) \right\}$. We consider an arbitrary, fixed $x > x_1(\gamma, g)$ and suppress the dependence of $\alpha_1, \alpha_2 \ldots $ on $x$. Since $b < 0 < \sqrt{x \frac{g}{\gamma(1+\gamma)}}$, \eqref{eqn:gap_bounds_interim3} shows, 
\begin{multline}\label{eqn:gap_bounds_interim4}
   \prob_{\mathsmaller{\renewalpt}}\left(\tau_x^H \in (\alpha_{-1}, \alpha_0]\right) = \prob_{\mathsmaller{\renewalpt}}\left(\tau_x^H \leq \tau_a^V \wedge \tau_b^V \right)  \leq \prob_{\mathsmaller{\renewalpt}}\left(\tau_x^H \leq \tau_a^V \wedge \tau_{\sqrt{x \frac{g}{\gamma(1+\gamma)}}}^V\right) \\
    \leq e^C \> e^{-xg/8\gamma}.
\end{multline}
Fix $k \geq 1$. Recall that on $N^- \geq k$, $(H_{\alpha_{3k-1}}, V_{\alpha_{3k-1}}) = \left(0, -(g+g/4\gamma)/(1+\gamma)\right)$. Starting from $\alpha_{3k -1} $,  the velocity cannot rise to $\sqrt{x \frac{g}{\gamma(1+\gamma)}}$, for any $x > 0$, without first passing through $\renewalpt$, by \eqref{eqn:increase_on_boundary_only}. In addition, When $N < k$, $\alpha_{3k-1} = \alpha_{3k}$ and $\tau_x^H \in (\alpha_{3k -1}, \alpha_{3k} ]$ is impossible. We use these observations, along with the strong Markov property at $\alpha_{3k-1}$ and \eqref{eqn:gap_bounds_interim3}, to obtain for any $k \geq 1$,
\begin{multline}\label{eqn:gap_bounds_interim5}
    \prob_{\mathsmaller{\renewalpt}}\left(\tau_x^H \in (\alpha_{3k-1}, \alpha_{3k}]\right) = \prob_{\mathsmaller{\renewalpt}}\left(\tau_x^H \in (\alpha_{3k-1}, \alpha_{3k}], \> N^- \geq k\right)\\ = \expec_{\mathsmaller{\renewalpt}}\left(\mathbbm{1}_{N^- \geq k} \> \prob_{(H_{\alpha_{3k-1}}, V_{\alpha_{3k-1}})}\left(\tau_x^H \leq  \tau_a^V\wedge \tau_{-g/(1+\gamma)}^V\right)\right)\\
    \leq  \underset{\nu \in \left[a, 0\right]}{\sup}\>\prob_{(0, \nu)}\left(\tau_x^H \leq \tau_a^V\wedge \tau_{-g/(1+\gamma)}^V \right)\>\prob_{\mathsmaller{\renewalpt}}\left( N^- \geq k\right)  \leq e^C \> e^{-xg/8\gamma}\>\prob_{\mathsmaller{\renewalpt}}\left( N^- \geq k\right).
\end{multline}
Now we estimate the probability of $\tau_x^H \in (\alpha_{3k}, \alpha_{3k + 1}]$ for $k \geq 0$. If $H_{\alpha_{3k}} \leq x/2$ then $\alpha_{3k + 1} = \alpha_{3k}$ and $\tau_x^H \in (\alpha_{3k}, \alpha_{3k + 1}]$ is impossible. Therefore, we need only consider cases in which $H$ reaches $x/2$ between $\alpha_{3k - 1}$, when $H$ is zero, and $\alpha_{3k}$. Therefore, once again we apply the strong Markov property at $\alpha_{3k-1}$ and \eqref{eqn:gap_bounds_interim3}, with $x/2$ in place of $x$, to obtain
\begin{multline}\label{eqn:gap_bounds_interim6}
    \prob_{\mathsmaller{\renewalpt}}\left(\tau_x^H \in (\alpha_{3k}, \alpha_{3k+1}]\right)\\
    = \prob_{\mathsmaller{\renewalpt}}\left(\tau_x^H \in (\alpha_{3k}, \alpha_{3k+1}], \> \inf\{t \geq \alpha_{3k-1} \> | \> H_t \geq x/2 \} \in (\alpha_{3k-1}, \alpha_{3k}],\> N^- \geq k\right)\\
    \leq \underset{\nu \in \left[a, 0\right]}{\sup}\>\prob_{(0, \nu)}\left(\tau_{x/2}^H \leq \tau_a^V \wedge \tau_{-g/(1+\gamma)}^V \right)\>\prob_{\mathsmaller{\renewalpt}}\left( N^- \geq k\right)
    \leq e^C \> e^{-xg/16\gamma}\>\prob_{\mathsmaller{\renewalpt}}\left( N^- \geq k\right).
\end{multline}
Note that for any $k \geq 0$,  the fact \eqref{eqn:increase_on_boundary_only} that the velocity increases only where $H = 0$ and the definition of $\alpha_{3k + 1}$ show $(H_{\alpha_{3k+1}}, V_{\alpha_{3k+1}}) \in[0, x/2] \times \left(-g/\gamma, a\right]$. Suppose we have initial conditions such that $(H_0, V_0) \in [0, x/2] \times \left(-g/\gamma, a\right]$. When $t < \tau_{-(g + g/4\gamma)/(1+\gamma)}^V$, noting $a < -(g + g/4\gamma)/(1+\gamma)$, we use \eqref{eqn:system}, $S_t \leq -t(g + g/4\gamma)/(1+\gamma)$ and $-V_t > 0$ to show
\begin{equation}\label{eqn:gap_bounds_interim7}
    H_t \leq H_t - V_t = H_0 - V_0 + (1+\gamma)S_t + tg - B_t \leq x/2 + g/\gamma -tg/4\gamma - B_t.
\end{equation}
By \eqref{eqn:gap_bounds_interim7}, if $H$ hits $x$ before $V$ hits $-(g + g/4\gamma)/(1+\gamma)$, then $\sup_{t < \infty} \left(-B_t - tg/4\gamma \right)$ must have reached $x/2 - g/\gamma$, which is positive so long as we have chosen $x_1(\gamma, g)$ large enough. We use the strong Markov property at $\alpha_{3k+1}$, \eqref{eqn:gap_bounds_interim7} and $\underset{t < \infty}{\sup}\left(-B_t - tg/4\gamma \right) \overset{d}{=} \text{Exponential}(g/2\gamma)$ (see Chapter 3.5 of \cite{kar}) to show for any $k \geq 0$,
\begin{multline}\label{eqn:gap_bounds_interim8}
    \prob_{\mathsmaller{\renewalpt}}\left(\tau_x^H \in (\alpha_{3k+1}, \alpha_{3k+2}]\right) =\prob_{\mathsmaller{\renewalpt}}\left(\tau_x^H \in (\alpha_{3k+1}, \alpha_{3k+2}], \> N^- \geq k+1\right) \\
    \leq \underset{(h, \nu) \in [0, x/2] \times \left(-g/\gamma, a\right]}{\sup}\>\prob_{(h, \nu)}\left(\tau_x^H < \tau_{-(g + g/4\gamma)/(1+\gamma)}^V \right) \>\prob_{\mathsmaller{\renewalpt}}\left( N^- \geq k+1\right) \\
    \leq \prob \left(\underset{t < \infty}{\sup}\left(-B_t - tg/4\gamma \right) \geq x/2 - g/\gamma \right)\>\prob_{\mathsmaller{\renewalpt}}\left( N^- \geq k\right)
    = e^{-(g/2\gamma)(x/2 - g/\gamma)}\>\prob_{\mathsmaller{\renewalpt}}\left( N^- \geq k\right)\\
    = e^C \> e^{-xg/4\gamma}\>\prob_{\mathsmaller{\renewalpt}}\left( N^- \geq k\right).
\end{multline}
Recall that when $\tau_a^V < \tau_b^V$, we have $\zeta = \alpha_{3N^-}$. Therefore, on $\tau_a^V < \tau_b^V$ we have $\tau_x^H < \zeta$ if and only if $\tau_x^H \in (\alpha_{3k + j}, \alpha_{3k+j + 1}]$ for some $k \geq 0$, $j = -1, 0, 1$. As a result, we use \eqref{eqn:gap_bounds_interim4}, \eqref{eqn:gap_bounds_interim5}, \eqref{eqn:gap_bounds_interim6} and \eqref{eqn:gap_bounds_interim8} to show \begin{multline}\label{eqn:gap_bounds_interim10}
    \prob_{\mathsmaller{\renewalpt}}\left(\tau_x^H < \zeta, \> \tau_a^V < \tau_b^V \right) = \sum_{k = 0}^\infty \sum_{j = -1}^1 \prob_{\mathsmaller{\renewalpt}}\left(\tau_x^H \in (\alpha_{3k+j}, \alpha_{3k+j+1}]\right) \\
    \leq 3e^C \> e^{-xg/16\gamma}\>\sum_{k = 0}^\infty\prob_{\mathsmaller{\renewalpt}}\left( N^- \geq k\right).
\end{multline}
The exact same argument as in Lemma \ref{lemma:stop_renewal_from_below} shows $\expec_{\mathsmaller{\renewalpt}}\> N^- \leq e^{C'}.$ Recalling that $\tau_a^V < \tau_b^V$ implies $\zeta < \tau_{\sqrt{x\frac{g}{\gamma(1+\gamma)}}}^V$,  we have by \eqref{eqn:gap_bounds_interim10} a positive constant $C''$ such that,
\begin{equation}\label{eqn:gap_bounds_interim11}
    \prob_{\mathsmaller{\renewalpt}}\left(\tau_x^H < \zeta < \tau^V_{\sqrt{x\frac{g}{\gamma(1+\gamma)}}} , \> \tau_a^V < \tau_b^V \right)  = \prob_{\mathsmaller{\renewalpt}}\left(\tau_x^H < \zeta, \> \tau_a^V < \tau_b^V \right) \leq e^{C''}\> e^{-xg/16\gamma}.
\end{equation}
Now consider the case $\tau_b^V < \tau_a^V$. Once again fix $x > x_1(\gamma, g)$. \eqref{eqn:gap_bounds_interim3} directly implies
\begin{multline}\label{eqn:gap_bounds_interim12}
    \prob_{\mathsmaller{\renewalpt}}\left(\tau_x^H \leq \tau_b^V < \tau_a^V \right) \leq \prob_{\mathsmaller{\renewalpt}}\left(\tau_x^H \leq \tau_a^V \wedge \tau^V_{\sqrt{x\frac{g}{\gamma(1+\gamma)}}}\right) \leq e^C \> e^{-xg/8\gamma}.
\end{multline}
We now control $H$ in the time between $\tau_b^V$ and $\zeta$. There are two possibilities: Either $\zeta < \tau_a^V$, or $V$ crosses down to $a$ before the renewal time is reached. In the former case: Since $b \in [a, 0]$ and $H_{\tau_b^V} = 0$ by \eqref{eqn:increase_on_boundary_only}, we use the strong Markov property at $\tau_b^V$ and \eqref{eqn:gap_bounds_interim3} again to show
\begin{multline}\label{eqn:gap_bounds_interim13}
    \prob_{\mathsmaller{\renewalpt}}\left(\tau_x^H \in (\tau_b^V, \zeta], \> \zeta < \tau_a^V \wedge \tau^V_{\sqrt{x\frac{g}{\gamma(1+\gamma)}}} \right) \leq \prob_{(0, b)}\left(\tau_x^H \leq \tau_a^V \wedge \tau^V_{\sqrt{x\frac{g}{\gamma(1+\gamma)}}} \right) \leq e^C \> e^{-xg/8\gamma}.
\end{multline}
In the case where $\tau_a^V < \zeta$, we modify the analysis used to prove \eqref{eqn:gap_bounds_interim11}, as follows. Define $\tilde{\beta}_{-1}=\tau_a^V\wedge \tau_b^V$ and $\tilde{\beta}_0 = \inf\{t \geq \tilde{\beta}_{-1}\>|\> V_t = a \}$. Define $\{\tilde{\beta}_j\}_{j \ge 1}$ and $\tilde{N}^-$ analogously to  $\{\alpha_{j}\}_{j \ge 1}$ and $N^-$ in \eqref{eqn:gap_bounds_interim_times}. The strong Markov property at $\tilde{\beta}_{-1}$ and \eqref{eqn:gap_bounds_interim3} show,
\begin{multline}\label{eqn:gap_bounds_interim14}
    \prob_{\mathsmaller{\renewalpt}}\left(\tau_x^H \in (\tilde{\beta}_{-1}, \tilde{\beta}_0], \> \tau_a^V < \zeta < \tau^V_{\sqrt{x\frac{g}{\gamma(1+\gamma)}}} \right) \leq \prob_{(0, b)}\left(\tau_x^H \leq \tau_a^V \wedge \tau^V_{\sqrt{x\frac{g}{\gamma(1+\gamma)}}} \right) \leq e^C \> e^{-xg/8\gamma}.
\end{multline}
The analysis of \eqref{eqn:gap_bounds_interim4}, \eqref{eqn:gap_bounds_interim5}, \eqref{eqn:gap_bounds_interim6} and \eqref{eqn:gap_bounds_interim8} is now repeated, with $\tilde{\beta}_j$ in place of $\alpha_j$ for $j \ge 0$ and $\tilde{N}^-$ in place of $N^-$, giving for $k \ge 0$ and $j = 0, 1, 2$,
\begin{equation}\label{eqn:gap_bounds_interim15}
    \prob_{\mathsmaller{\renewalpt}}\left(\tau_x^H \in (\tilde{\beta}_{3k+j}, \tilde{\beta}_{3k+j+1}], \> \tau_a^V < \zeta < \tau^V_{\sqrt{x\frac{g}{\gamma(1+\gamma)}}} \right) 
    \leq e^C \> e^{-xg/16\gamma}\>\prob_{\mathsmaller{\renewalpt}}\left( \tilde{N}^- \geq k\right).
\end{equation}
Combining \eqref{eqn:gap_bounds_interim12}, \eqref{eqn:gap_bounds_interim13}, \eqref{eqn:gap_bounds_interim14} and \eqref{eqn:gap_bounds_interim15},
\begin{multline}\label{eqn:gap_bounds_interim16}
    \prob_{\mathsmaller{\renewalpt}}\left(\tau_x^H < \zeta < \tau^V_{\sqrt{x\frac{g}{\gamma(1+\gamma)}}}, \> \tau_b^V < \tau_a^V \right) \leq \prob_{\mathsmaller{\renewalpt}}\left(\tau_x^H \in (0, \tau_b^V], \> \zeta < \tau^V_{\sqrt{x\frac{g}{\gamma(1+\gamma)}}}, \> \tau_b^V < \tau_a^V \right) \\
    + \prob_{\mathsmaller{\renewalpt}}\left(\tau_x^H \in (\tau_b^V, \zeta], \> \zeta < \tau_a^V \wedge \tau^V_{\sqrt{x\frac{g}{\gamma(1+\gamma)}}} \right)
    + \prob_{\mathsmaller{\renewalpt}}\left(\tau_x^H \in (\tau_b^V, \zeta], \> \tau_a^V < \zeta < \tau^V_{\sqrt{x\frac{g}{\gamma(1+\gamma)}}} \right)\\
    \leq 2e^C \> e^{-xg/8\gamma} + \sum_{k = 0}^\infty \sum_{j = -1}^1 \prob_{\mathsmaller{\renewalpt}}\left(\tau_x^H \in (\tilde{\beta}_{3k+j}, \tilde{\beta}_{3k+j+1}]\right)\\
    \leq 2e^C \> e^{-xg/8\gamma} + e^C \> e^{-xg/16\gamma} \sum_{k = 0}^\infty \prob_{\mathsmaller{\renewalpt}}\left(\tilde{N}^- \geq k\right).
\end{multline}
Arguing as in \eqref{eqn:gap_bounds_interim11}, we achieve,
\begin{equation}\label{eqn:gap_bounds_interim17}
    \prob_{\mathsmaller{\renewalpt}}\left(\tau_x^H < \zeta < \tau^V_{\sqrt{x\frac{g}{\gamma(1+\gamma)}}}, \> \tau_b^V < \tau_a^V \right) \leq e^{C''}\> e^{-xg/16\gamma}.
\end{equation}
Our choice of $x > x_1(\gamma, g)$ in \eqref{eqn:gap_bounds_interim2}, \eqref{eqn:gap_bounds_interim11} and \eqref{eqn:gap_bounds_interim17} was arbitrary, so the upper bound of the theorem is proven.

We now prove the lower bound. For any $x > 0$, we consider a path in which $H$ attains a positive value $h_0$ before the velocity leaves $[a, b]$, then rises to $x$ before returning zero. Since by definition \eqref{eqn:renewal_def}, $\zeta> \tau_a^V \wedge \tau_b^V$ and $H_{\zeta} = 0$, this implies $\tau_x^H < \zeta$. We select any $h_0 \geq (\gamma/2g)\log 2$ and $x > h_0$. The strong Markov property at $\tau_{h_0}^H$ and Lemma \ref{lemma:gap_increases_before_zero} give,
\begin{multline}\label{eqn:gap_bounds_interim18}
    \prob_{\mathsmaller{\renewalpt}}\left(\tau_x^H < \zeta \right) \ge  \prob_{\mathsmaller{\renewalpt}}\left(\tau_{h_0}^H < \tau_a^V \wedge \tau_b^V, \> \tau_{h_0}^H < \tau_x^H < \sigma(\tau_{h_0}^H) \right) \\
    \ge   \prob_{\mathsmaller{\renewalpt}}\left(\tau_{h_0}^H < \tau_a^V \wedge \tau_b^V\right) \> \underset{\nu \in (a, b)}{\inf}\>\prob_{(h_0, \nu)}\left(\tau_x^H < \sigma(0) \right)
    \geq e^{-C} \> e^{-2xg/\gamma},
\end{multline}
where $C = - \log \left(\prob_{\mathsmaller{\renewalpt}}\left(\tau_{h_0}^H < \tau_a^V \wedge \tau_b^V\right)\right)$. It remains only to show $C < \infty.$ Suppose $H_0 = 0$ and $V_0 = -g/(1+\gamma)$. Define $T = \frac{1}{\gamma}\log \left(\frac{-g/(1+\gamma) + g/\gamma}{a + g/\gamma} \right) = \frac{1}{\gamma} \log2 $. By \eqref{eqn:hitting_time_velocity}, $V$ cannot hit $a$ before time $T$ and hence, $S_u \geq ua$ for $u < T$. Thus, if $\inf_{u < T} \> B_u < -(h_0 - T a) < 0$ (recalling $a < 0$), we obtain from system equations \eqref{eqn:system}
$$
\sup_{u < T}\>H_u = \sup_{u<T}(S_u-B_u + L_u) \geq \sup_{u < T}\> \left(ua - B_u \right) > h_0.
$$
If, in addition, $\sup_{u < T} \> \left(B_u -ua\right) < b + g/(1+\gamma)$ then $V_u \leq -g/(1+\gamma) - u(\gamma a + g) + \sup_{s < T} \> \left(B_s - sa \right) < b$ for all $u < T$, since $\gamma a + g > 0$. Recalling that $b + g/(1+\gamma) > 0$, we have shown
\begin{multline}\label{eqn:gap_bounds_interim19}
    \prob_{\mathsmaller{\renewalpt}}\left(\tau_{h_0}^H < \tau_a^V \wedge \tau_b^V\right) \geq \prob \left(\inf_{u < T} \> B_u < -(h_0 - T a) , \>  \sup_{u < T} \> \left(B_u -ua\right) < b + g/(1+\gamma)\right) > 0.
\end{multline}
\eqref{eqn:gap_bounds_interim19} shows $C < \infty$ in \eqref{eqn:gap_bounds_interim18}, and the theorem is proven with $x'(\gamma, g) = x_1(\gamma, g) \vee \left[(\gamma/2g)\log 2\right]$.
\end{proof}

\section{Tail bounds for $\pi$ and path fluctuations}\label{osctail}

This section is devoted to the proof of Theorems \ref{thm:tails_under_stationarity} and \ref{thm:fluctuations}.

\begin{proof}[Proof of Theorem \ref{thm:tails_under_stationarity}]
First we prove the theorem's lower bound for $\pi\left(\Real_+ \times (y, \infty) \right)$. Fix $y'(\gamma, g)$ as in Theorem \ref{thm:velocity_bounds} and $y > y'(\gamma, g) > 0$. Recall the notation $\tau^V_z(\alpha) = \inf\{t \geq \alpha \> : \> V_t = z \}$ for any stopping time $\alpha$. On the set $\tau_{2y}^V < \zeta$, we have $\tau_{y}^V(\tau_{2y}^V) < \zeta$. Theorem \ref{thm:velocity_bounds}, the strong Markov property at $\tau_{y}^V$ and the definition of $\pi$ in Theorem \ref{thm:statmeas_renewal} show there exists a constant $C > 0$ such that,
\begin{multline}\label{eqn:pi_bounds1}
    \expec_{\mathsmaller{\renewalpt}} \> \left(\zeta \right) \> \pi\left(\Real_+ \times (y, \infty) \right) = \expec_{\mathsmaller{\renewalpt}}\left(\int_0^\zeta \, \indi{ V_t > y} \, dt\right) \\
    \ge \expec_{\mathsmaller{\renewalpt}}  \left(\indi{\tau_{2y}^V < \zeta} \int_{\tau_{2y}^V}^{\tau_{y}^V(\tau_{2y}^V)} \, \indi{ V_t > y} \, dt\right)
    = \prob_{\mathsmaller{\renewalpt}}\left(\tau_{2y}^V < \zeta \right) \> \expec_{(0, 2y)}\left(\int_0^{\tau_{y}^V}\, \indi{ V_t > y} \, dt\right) \\
    \ge e^{-C} \> e^{-2(1+\gamma)(2y + g/(1+\gamma))^2} \> \expec_{(0, 2y)} \> \tau_{y}^V \ge e^{-C} \> e^{-4(1+\gamma)(y + g/(1+\gamma))^2} \> \expec_{(0, 2y)} \> \tau_{y}^V.
\end{multline}
By \eqref{eqn:hitting_time_velocity}, $\expec_{(0, 2y)} \> \tau_{y}^V \ge \frac{1}{\gamma}\log\left(\frac{2y + g/\gamma}{y + g/\gamma} \right) \ge \frac{1}{\gamma}\log\left(\frac{2y'(\gamma, g) + g/\gamma}{y'(\gamma, g) + g/\gamma} \right) > 0$ for all $y > y'(\gamma, g)$. So \eqref{eqn:pi_bounds1} proves the lower bound of the theorem for $\pi\left(\Real_+ \times (y, \infty) \right)$, for all $y > y'(\gamma, g)$.

We now prove the lower bound for $\pi\left((x, \infty) \times (-g/\gamma, \infty) \right)$. Fix $x'(\gamma, g) > 0$ as in Theorem \ref{thm:gap_bounds} and $x > x'(\gamma, g)$. Proceeding similarly to \eqref{eqn:pi_bounds1}, by Theorem \ref{thm:gap_bounds} there exists a $C' > 0$ such that,
\begin{equation}\label{eqn:pi_bounds2}
    \expec_{\mathsmaller{\renewalpt}} \> \left(\zeta \right) \> \pi\left((x, \infty) \times (-g/\gamma, \infty) \right)
    \ge e^{-C'} \> e^{-4xg/\gamma} \> \underset{\nu > -g/\gamma}{\inf}\> \expec_{(2x, \nu)} \> \tau_{x}^H.
\end{equation}
When $H_0 = 2x$, \eqref{eqn:system} and \eqref{eqn:velocity_lower_bounded} show $H_t \ge 2x + S_t - B_t \ge 2x - tg/\gamma - B_t$ for any $V_0 > -g/\gamma$. Therefore, $\tau_{x}^H \ge \inf\{t \ge 0 \> : \> -B_t - tg/\gamma = -x \}$ for any initial condition $V_0$. The expected hitting time of Brownian motion with drift $-g/\gamma$ at level $-x'(\gamma,g)$ is strictly positive and finite (e.g. Ch. 3C in \cite{kar}), so we have $C''>0$ such that $\inf_{\nu > -g/\gamma}\> \expec_{(2x, \nu)} \> \tau_{x}^H \ge C''$ for each $x > x'(\gamma, g)$. This fact and \eqref{eqn:pi_bounds2} prove the required lower bound for $\pi\left((x, \infty) \times (-g/\gamma, \infty) \right)$.

We now show the upper bounds of the theorem. Again using the representation for $\pi$ in Theorem \ref{thm:statmeas_renewal} and the velocity bounds in Theorem \ref{thm:velocity_bounds}, we obtain for $y > y'(\gamma,g)$,
\begin{multline}\label{eqn:pi_bounds3}
 \expec_{\mathsmaller{\renewalpt}} \> \left(\zeta \right) \> \pi\left(\Real_+ \times (y, \infty) \right) = \expec_{\mathsmaller{\renewalpt}}\left(\int_0^\zeta \, \indi{ V_t > y} \, dt\right) \le \expec_{\mathsmaller{\renewalpt}}\left(\indi{\tau_{y}^V < \zeta}\left(\zeta - \tau_{y}^V\right) \right)\\
   \le \sqrt{ \prob_{\mathsmaller{\renewalpt}}\left(\tau_{y}^V < \zeta \right)}\sqrt{\expec_{\mathsmaller{\renewalpt}}(\zeta^2)}
    \le e^{c/2} \, e^{-\frac{1+\gamma}{8}(y+g/(1+\gamma))^2}\sqrt{\expec_{\mathsmaller{\renewalpt}}(\zeta^2)}.
\end{multline}
The upper bound for $\pi\left(\Real_+ \times (y, \infty) \right)$ follows from \eqref{eqn:pi_bounds3} upon noting that $\expec_{\mathsmaller{\renewalpt}}(\zeta^2)<\infty$, which is a consequence of Theorem \ref{thm:renewal}. The upper bound for $\pi\left((x, \infty) \times (-g/\gamma, \infty) \right)$ is proven similarly using Theorem \ref{thm:gap_bounds}.
\end{proof}

\begin{proof}[Proof of Theorem \ref{thm:fluctuations}] The argument is identical to the one provided for Theorem 2.2 of \cite{grav} and Theorem 2.2 of \cite{queue}, so we give it cursory treatment. To demonstrate the fluctuation result for $V$: Theorem \ref{thm:velocity_bounds}, \eqref{eqn:renewal_sequence} and the Borel-Cantelli lemmas give for any $\epsilon \in (0,1)$,
$$
\frac{\sqrt{1 - \epsilon}}{\sqrt{2}\sqrt{1+\gamma}} \leq \limsup_{n \to \infty} \frac{\sup_{t \in [\zeta_n, \zeta_{n+1}]}\,\, V_t}{\sqrt{\log n}} \leq 2 \frac{\sqrt{1+\epsilon}}{\sqrt{1+\gamma}}, \quad \ \text{almost surely.}
$$
A sub-sequence argument, and the observation $\lim_{n \rightarrow \infty} \frac{\zeta_n}{n} = \expec \zeta_0$ almost surely, complete the proof. The second statement is proven similarly, with Theorem \ref{thm:gap_bounds}.
\end{proof}
\begin{proof}[Proof of Theorem \ref{thm:lln}] By Theorem \ref{thm:fluctuations},
\begin{equation}
   0 = \lim_{t \to \infty} \frac{H_t - V_t}{t} = \lim_{t \to \infty} \frac{H_0 - V_0 + \left(1 + \gamma\right)S_t  - B_t + gt }{t},
\end{equation}
and thus, using $\frac{B_t}{t} \rightarrow 0$ almost surely as $t \rightarrow \infty$, we obtain $\frac{S_t}{t} \to -\frac{g}{1+\gamma}$. In addition, again by Theorem \ref{thm:fluctuations}, $\lim_{t \to \infty} \frac{H_t}{t} = \lim_{t \to \infty} \frac{S_t - X_t}{t} = 0$, giving the result.
\end{proof}
\section{Exponential ergodicity}\label{geoerg}
This section will prove Theorem \ref{thm:expo_ergodic}. We first show the process is ergodic, in the sense that $P^t((h, \nu, \> \cdot \>)$ converges to $\pi$ in total variation, using coupling techniques of Ch. 10 \cite{thor}. We show this can be upgraded to exponentially fast convergence using Lyapunov function techniques. 

Typical proofs of exponential ergodicity via Harris' Theorem available in the literature (e.g. \cite{hairandmat,meyntweedie,mattinglypillai,cooke,budhirajalee}) rely on the existence of continuous densities of $P^t$ with respect to Lebesgue measure. Such densities give a positive chance of coupling two versions of the process within a set toward which the process has a strong drift. 

In our case the transition laws do not have densities, as can be verified by observing the velocity decreases deterministically away from the boundary $\partial S$. In addition, the generator of the process is \emph{not hypoelliptic} in the interior of the domain, which makes the situation more complicated: Hypoellipticity is a standard tool for establishing exponential ergodicity in the absence of ellipticity (\cite{mattingly2002geometric}). We adapt techniques from \cite{thor}, which involve coupling the renewal times of two versions of the process, to furnish exponential ergodicity for our model.

Before we proceed to details, we provide a proof outline. The first step is proving convergence to stationarity in total variation distance. This is done using Theorem 3.3, Ch. 10 of \cite{thor} after proving in Lemma \ref{lemma:spread_out} that the gaps between the renewal times are `spread-out' (see Section 3.5, Ch. 10 of \cite{thor}). The total variation convergence result is stated in Theorem \ref{thm:ergodic}. 

This convergence is then upgraded to exponential ergodicity by establishing the \emph{drift condition} \eqref{eqn:drift_cond} and \emph{minorization condition} \eqref{eqn:minorization_cond} which can be used to furnish exponential ergodicity using the recipe in \cite{ergodicity}. The drift condition is established via Lyapunov functions that can be obtained from exponential moments of hitting times of certain carefully chosen sets. The finiteness of exponential moments for one such set $\Lambda$ is shown in Lemma \ref{lemma:lyapunov_exponential_moment}. 

The minorization condition is obtained by establishing certain structural properties for the Markov process along the lines of \cite{stability}, which are stated in Lemma \ref{lemma:structural}. In particular, we show that the process $(H_t,V_t)_{t \ge 0}$ is $\pi$-irreducible and positive Harris recurrent and the aforementioned set $\Lambda$ is `petite'. These structural properties imply a stronger version of \eqref{eqn:minorization_cond} as stated in Lemma \ref{lemma:minorization}. Finally, the proof of Theorem \ref{thm:expo_ergodic} is completed at the end of the Section.
\begin{lemma} \label{lemma:spread_out} There exists a non-negative function $f$ with $\int_0^{\infty}f(x)\,dx>0$ such that for every measurable set $A \subset [0, \infty)$, $$\prob_{\mathsmaller{\renewalpt}}\left(\zeta  \in A\right) \geq \int_A f(x) \, dx. $$
\end{lemma}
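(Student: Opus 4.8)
The plan is to exhibit $f$ directly via a Lebesgue‑decomposition argument: the law of $\zeta$ under $\prob_{\mathsmaller{\renewalpt}}$ has a nontrivial absolutely continuous component, whose density (restricted to a bounded interval) will serve as $f$. The reason such a component is present is that the renewal point $\renewalpt$ lies on $\partial S$ and, by \eqref{eqn:increase_on_boundary_only}, is always reached with the velocity climbing up to $-g/(1+\gamma)$ through the local time $L$; this climb is governed by the local time of a reflecting diffusion and therefore spreads the law of $\zeta$ over an interval. I would isolate this mechanism on a positive‑probability event by the strong Markov property, using the elementary fact that if $T$ is $\F$‑measurable and the conditional law of $R$ given $\F$ dominates a fixed multiple of Lebesgue measure on a fixed interval, then so does the law of $T+R$.

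Work on $E_0 = \{\tau_a^V < \tau_b^V\}$, which has positive $\prob_{\mathsmaller{\renewalpt}}$‑probability (with positive probability the driving path makes $H$ leave $0$ before $V$ has climbed past $b$ and keeps $H$ positive while $V$ then decreases to $a$; cf.\ the proof of Lemma \ref{lemma:vel_escape_interval}). On $E_0$, $\zeta$ is the first time after $\tau_a^V$ that $V$ returns to $-g/(1+\gamma)$, and $H_{\tau_a^V}>0$. Let $\psi = \sigma(\tau_a^V)$ be the first return of $H$ to $0$ after $\tau_a^V$; it is finite almost surely on $E_0$, since while $H>0$ it is dominated by a Brownian motion with drift $-g/\gamma$ (see \eqref{eqn:system_starts_interior}). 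On $(\tau_a^V,\psi)$ the velocity is deterministic, so $V_\psi = (a+g/\gamma)e^{-\gamma(\psi-\tau_a^V)} - g/\gamma =: \hat v(\psi-\tau_a^V)$, with $\hat v$ mapping $(0,\infty)$ strictly decreasingly onto $(-g/\gamma,a)$; in particular $V_\psi < a < -g/(1+\gamma)$, and $V$ stays below $a$ on $[\tau_a^V,\psi]$. Choose $0<r_1<r_2<\infty$ so that $E_1 := E_0 \cap \{\psi-\tau_a^V \in [r_1,r_2]\}$ has positive probability; then on $E_1$ the value $V_\psi$ lies in the fixed compact set $K := \hat v([r_1,r_2]) \subset (-g/\gamma,a)$. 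Writing $\zeta = \psi + R$ with $R = \zeta-\psi$, the strong Markov property at $\psi$ gives, on $E_1$, $\prob_{\mathsmaller{\renewalpt}}(R \in \cdot \mid \F_\psi) = \mu_{V_\psi}$, where $\mu_v$ denotes the law of the hitting time of $\renewalpt$ started from $(0,v)$ (equivalently, of the first time the velocity reaches $-g/(1+\gamma)$ from that start, since for $v<-g/(1+\gamma)$ that level is attained while $H=0$). Suppose we can show that there are $c_0>0$ and $0<m_1<m_2$ with
\begin{equation}\label{eqn:mu_ac}
\mu_v(A) \ \geq\ c_0\,\mathrm{Leb}\big(A \cap [m_1,m_2]\big) \qquad \text{for every Borel } A \subset [0,\infty) \text{ and every } v \in K .
\end{equation}
Then, using $E_1 \in \F_\psi$,
\begin{multline*}
\prob_{\mathsmaller{\renewalpt}}(\zeta \in A) \ \geq\ \prob_{\mathsmaller{\renewalpt}}(\zeta \in A,\, E_1) \ =\ \expec_{\mathsmaller{\renewalpt}}\!\left(\mathbbm{1}_{E_1}\,\mu_{V_\psi}(A - \psi)\right) \\ \geq\ c_0 \int_A \prob_{\mathsmaller{\renewalpt}}\!\big(\psi \in [x-m_2,\, x-m_1],\ E_1\big)\, dx ,
\end{multline*}
which is the lemma with $f(x) = c_0\,\prob_{\mathsmaller{\renewalpt}}\big(\psi \in [x-m_2,\, x-m_1],\ E_1\big)$; here $\int_0^\infty f = c_0(m_2-m_1)\,\prob_{\mathsmaller{\renewalpt}}(E_1) > 0$.

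The remaining point \eqref{eqn:mu_ac} is the crux, and the difficulty is precisely that the transition laws of $(H,V)$ have no Lebesgue densities, so absolute continuity of $\mu_v$ must be extracted from the local time. From $(0,v)$, integrating $dV_t = -(\gamma V_t+g)\,dt + dL_t$ gives $V_t + g/\gamma = (v+g/\gamma)e^{-\gamma t} + \int_0^t e^{-\gamma(t-s)}\,dL_s$, so $V_t = -g/(1+\gamma)$ exactly when $\int_0^t e^{\gamma s}\,dL_s = \Psi_v(t)$, where $\Psi_v(t) = \tfrac{g}{\gamma(1+\gamma)}e^{\gamma t} - (v+g/\gamma)$ is real‑analytic and strictly increasing with $\Psi_v(0) = -v - g/(1+\gamma)$ bounded away from $0$ and from $\infty$ for $v \in K$. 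I would then restrict to the event, of probability bounded below uniformly in $v\in K$, that $\sup_{t \le T} H_t$ is small and the climb is complete by a fixed time $T$; there $\mu_v$ is the law of a hitting time that depends only on the path of $L$, and a Girsanov change of measure removing the bounded drift $V_t$ from $H$ reduces the question to reflecting Brownian motion, whose inverse local time is a stable‑$\tfrac12$ subordinator $A$. In these terms the hitting time is $A_{\ell^\ast}$, where $\ell^\ast$ is the first local‑time level at which the continuous strictly increasing adapted curve $\ell \mapsto \int_0^{A_\ell} e^{\gamma s}\,dL_s$ meets $\ell \mapsto \Psi_v(A_\ell)$; since $\Psi_v$ is a smooth diffeomorphism depending smoothly and monotonically on $v$, \eqref{eqn:mu_ac} reduces to absolute continuity — with density bounded below on a fixed interval, uniformly over the resulting compact family of barriers — of the value attained by a stable‑$\tfrac12$ subordinator at the first level where it crosses a continuous increasing barrier. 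This last fact, which follows from the regularity of overshoot distributions of subordinators over continuous barriers, is where the substance lies; the rest is the bookkeeping above.
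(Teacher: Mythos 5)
Your high‑level plan — restrict to $\{\tau_a^V<\tau_b^V\}$, decompose $\zeta$ past a stopping time, and show the remaining hitting time has a Lebesgue component via a uniform lower bound like \eqref{eqn:mu_ac} — is the same as the paper's. The bookkeeping (the event $E_1$, the formula $f(x)=c_0\,\prob_{\mathsmaller{\renewalpt}}(\psi\in[x-m_2,x-m_1],E_1)$) is fine. The genuine gap is at \eqref{eqn:mu_ac} itself, which you flag as ``where the substance lies'' and then do not prove.

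Your proposed reduction to ``absolute continuity of the value attained by a stable‑$\tfrac12$ subordinator at the first level where it crosses a continuous increasing barrier'' is not a correct formulation of the problem you have. In your setup, the first‑meeting level $\ell^*$ is defined by comparing $\ell\mapsto\int_0^{A_\ell}e^{\gamma s}\,dL_s=\int_0^\ell e^{\gamma A_u}\,du$ against $\ell\mapsto\Psi_v(A_\ell)$; \emph{both} curves are adapted functionals of the same subordinator path $A$. There is no fixed continuous barrier from a compact family here, and no off‑the‑shelf overshoot theorem applies. Even a correctly stated fixed‑barrier version would itself need a proof, and you give none. So the crux is missing, not merely deferred.

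The paper sidesteps this entirely with a sharper identity that your route overlooks. Starting from $(h,a)$ with $a<-g/(1+\gamma)$, by \eqref{eqn:increase_on_boundary_only} the upcrossing of $V$ to $-g/(1+\gamma)$ happens on $\{H=0\}$; there $L_t=-h-S_t+B_t$, so substituting into $V_t=a-\gamma S_t-gt+L_t$ gives $V_t=a-h+W_t$ with $W_t:=B_t-(1+\gamma)S_t-gt$. For $s<\tau^V_{-g/(1+\gamma)}$ one verifies $W_s<h-g/(1+\gamma)-a$ on both $\{H_s=0\}$ (directly from $V_s<-g/(1+\gamma)$) and $\{H_s>0\}$ (since there $L_s>-h-S_s+B_s$ forces $V_s>a-h+W_s$). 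Hence $\tau^V_{-g/(1+\gamma)}=\tau^W_{h-g/(1+\gamma)-a}$ exactly. Because $V$ is confined to a bounded interval up to that time, $W$ has bounded drift, and Girsanov applied \emph{to $W$ itself} (not to $H$, and with no reflection in sight) makes its law equivalent to Brownian motion on bounded time intervals. The hitting time of Brownian motion at a positive level has a density, which gives \eqref{eqn:spread_out1} immediately, and in fact delivers a uniform‑in‑$v$ version of your \eqref{eqn:mu_ac} for free since the target level $h-g/(1+\gamma)-a$ varies over a compact subset of $(0,\infty)$. The key observation you are missing is that the coefficient $1+\gamma$ in $W$ is precisely what makes the $L$‑terms cancel on $\{H=0\}$, reducing the problem to a one‑dimensional hitting time with no need for excursion or subordinator theory.
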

\begin{proof} Recall $a, b$ in the definition \eqref{eqn:renewal_def} of $\zeta$, where $ -g/\gamma < a < -g/(1+\gamma) < b < 0$. By definition of $\zeta$ and \eqref{eqn:increase_on_boundary_only}, when $\tau_a^V < \tau_b^V$ we have $\zeta = \tau_a^V + \inf\{t \geq 0 \> : \> V_{\tau_a^V + t} = -g/(1+\gamma)\}$. For any measurable $A \subset [0, \infty)$, we apply the strong Markov property at $\tau_a^V$ to obtain,
\begin{equation}\label{eqn:spread_out0}
    \prob_{\mathsmaller{\renewalpt}}\left(\zeta  \in A\right) \ge \prob_{\mathsmaller{\renewalpt}}\left(\zeta  \in A, \> \tau_a^V < \tau_b^V \right) = \expec_{\mathsmaller{\renewalpt}}\left(\mathbbm{1}_{\tau_a^V < \tau_b^V} \> F_A\left(H_{\tau_a^V}, \tau_a^V\right)\right),
\end{equation}
where $F_A(h, t) = \prob_{(h, a)}\left(\tau_{-g/(1+\gamma)}^V  \in A-t\right)$. We will show the right-hand side of \eqref{eqn:spread_out0} has a density with respect to Lebesgue measure $\lambda$ on $[0, \infty)$. By the Radon-Nikodym Theorem, it suffices to show that for each $u > 0$ and $A \subset [0, u]$,
\begin{equation}\label{eqn:spread_out1}
    \lambda(A) = 0 \implies \expec_{\mathsmaller{\renewalpt}}\left(\mathbbm{1}_{\tau_a^V < \tau_b^V} \> F_A\left(H_{\tau_a^V}, \tau_a^V\right)\right) = 0.
\end{equation}
Fix such a $u$ and $A$, $h\ge0$, and set $(H_0, V_0) = (h, a)$. \eqref{eqn:increase_on_boundary_only} says that if $t$ is a point of increase of $V_t$ it must be a point of increase for $L_t = L_t^{(h, a)} = 0 \vee \sup_{s\leq t}\left(-h +B_s - S_s \right)$. As a result,
\begin{equation}\label{eqn:spread_out2}
    \tau_{-g/(1+\gamma)}^V = \inf\{t \geq 0 \> : \> B_t -(1+\gamma) S_t - gt = h -g/(1+\gamma) - a \} = \tau^{W}_{h -g/(1+\gamma) - a},
\end{equation}
where $W_t = B_t - \int_0^t(1+\gamma) V_{s\wedge \tau_0^V} -gt$. The second equality in \eqref{eqn:spread_out2} follows from the fact that $-g/(1+\gamma) < 0$ and $V_0 = a < -g/(1+\gamma)$, so $W_t = B_t -(1+\gamma) S_t - gt$ for $t \leq \tau_{-g/(1+\gamma)}^V$. Since $t \mapsto V_{t \wedge \tau_0^V}$ is bounded, the Novikov condition and Girsanov's theorem (\cite{kar} Ch. 3.5) show the law of the process $W$ is equivalent to that of standard Brownian motion on any bounded time interval $[0, u]$. Therefore, noting that $h -g/(1+\gamma) - a>0$, $\tau_{h -g/(1+\gamma) - a}^W$ has a density with respect to Lebesgue measure (\cite{kar} Ch. 2.8). Now \eqref{eqn:spread_out2} implies that whenever $\lambda(A) = 0$, $F_A(h, t) = 0$ for any $t \in [0, u]$ and $h \geq 0$. This proves \eqref{eqn:spread_out1}. $f$ can thus be taken as the density with respect to $\lambda$ of the measure $A \mapsto \expec_{\mathsmaller{\renewalpt}}\left(\mathbbm{1}_{\tau_a^V < \tau_b^V} \> F_A\left(H_{\tau_a^V}, \tau_a^V\right)\right)$. Since $\mathbb{P}_{\renewalpt}\left(\tau_a^V < \tau_b^V\right) >0$, we have $\int_0^{\infty}f(x)\,dx>0$. The lemma follows. 
\end{proof}

\begin{theorem} \label{thm:ergodic} For every initial condition $(h, \nu) \in S$,
$$\left\|P^t((h, \nu), \> \cdot \>) - \pi \right\|_{TV} \longrightarrow 0 \quad \text{ as }\> t \to \infty.$$
\end{theorem}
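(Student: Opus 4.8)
\medskip
\noindent\textbf{Proof plan.} The plan is to recognize $(H_t,V_t)_{t\ge 0}$, started from an arbitrary $(h,\nu)\in S$, as a \emph{delayed regenerative process} in the sense of Chapter~10 of \cite{thor}, and then to invoke the total variation convergence theorem for such processes (Theorem~3.3, Ch.~10 of \cite{thor}). That theorem needs three inputs: the inter-renewal distribution is \emph{spread-out}, it has finite mean, and the delay (the time to the first renewal) is almost surely finite.

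First I would fix $(h,\nu)\in S$ and define the first renewal epoch $\tilde\zeta_0$ from this initial condition as the first time after $\tau^V_a\wedge\tau^V_b$ that $(H_t,V_t)$ hits the renewal point $\renewalpt$, with $a,b$ as in \eqref{eqn:renewal_def}. The key claim is that $\tilde\zeta_0<\infty$ $\prob_{(h,\nu)}$-a.s.\ for every $(h,\nu)$. For this: (i) the velocity reaches $\{a,b\}$ in finite time a.s., which follows from the reasoning behind Lemma~\ref{lemma:vel_escape_interval} --- it uses only the Skorohod domination of $V$ and $H$ by Brownian motions with negative drift and the recurrence of the latter, and localizes immediately to any finite starting configuration; and (ii) from a configuration with velocity in $\{a,b\}$ and \emph{arbitrary} gap $H_0\ge 0$, the renewal point is hit a.s., which is exactly what the excursion decompositions of Section~\ref{strep} establish --- those arguments, via the appendix hitting-time estimates, already control the large-gap case (compare \eqref{eqn:renewal_2nd_8} and the bounds around it), so keeping only their a.s.-finiteness conclusions, rather than the quantitative tail bounds, suffices here. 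Once $\tilde\zeta_0$ is reached the process sits at $\renewalpt$, so by the strong Markov property (Theorem~\ref{thm:existence}) and \eqref{eqn:renewal_sequence} the cycles after $\tilde\zeta_0$ are i.i.d., each distributed as the cycle under $\prob_{\mathsmaller{\renewalpt}}$ and independent of the initial segment on $[0,\tilde\zeta_0)$. Hence $(H_t,V_t)$ is a delayed regenerative process with proper delay $\tilde\zeta_0$ and inter-renewal law equal to the law of $\zeta$ under $\prob_{\mathsmaller{\renewalpt}}$.

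Next I would verify the remaining hypotheses of Theorem~3.3, Ch.~10 of \cite{thor}. The inter-renewal law has finite mean --- indeed finite moments of every order --- because Theorem~\ref{thm:renewal} gives $\prob_{\mathsmaller{\renewalpt}}(\zeta>t^2)\le e^{-ct}$ for $t>t_0(\gamma,g)$. It is spread-out because Lemma~\ref{lemma:spread_out} produces a nonnegative $f$ with $\int_0^\infty f(x)\,dx>0$ and $\prob_{\mathsmaller{\renewalpt}}(\zeta\in A)\ge\int_A f(x)\,dx$ for every Borel $A\subset[0,\infty)$, so the law of $\zeta$ under $\prob_{\mathsmaller{\renewalpt}}$ has a nontrivial absolutely continuous component, which is more than enough for spread-outness in the sense of Section~3.5, Ch.~10 of \cite{thor}. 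Theorem~3.3, Ch.~10 of \cite{thor} then yields $\|P^t((h,\nu),\cdot)-\rho\|_{TV}\to 0$ for the time-stationary law $\rho$ of the regeneration. Finally I would identify $\rho=\pi$: for bounded measurable $f$, this convergence gives $P^s((h,\nu),f)\to\int f\,d\rho$, hence $\frac1t\int_0^t P^s((h,\nu),f)\,ds\to\int f\,d\rho$, whereas Theorem~\ref{thm:statmeas_renewal} identifies this Cesàro limit as $\int f\,d\pi$; therefore $\rho=\pi$. (Equivalently, $\rho$ is the cycle-stationary law $\rho(A)=\expec_{\mathsmaller{\renewalpt}}\big(\int_0^\zeta\indi{(H_t,V_t)\in A}\,dt\big)/\expec_{\mathsmaller{\renewalpt}}(\zeta)$, which is precisely the representation of $\pi$ in Theorem~\ref{thm:statmeas_renewal}.) This completes the proof.

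The step I expect to be the main obstacle is not the regeneration bookkeeping but the a.s.\ finiteness of the delay $\tilde\zeta_0$ uniformly over initial conditions --- in particular, that from a very large gap with velocity pinned at $a$ or $b$ the renewal point is still reached a.s. The mechanism is that while $H>0$ the gap is dominated by a Brownian motion with negative drift $-g/\gamma$ (see \eqref{eqn:system_starts_interior}), which hits $0$ a.s., after which the excursion arguments of Section~\ref{strep} run unchanged; turning this into a clean statement amounts to a routine adaptation of Lemmas~\ref{lemma:renewal_tail_from_below}, \ref{lemma:renewal_tail_from_above} and \ref{lemma:renewal_tail_2nd_below} to arbitrary initial data, retaining only finiteness rather than the tail estimates. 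Everything else --- the spread-out property, the finite mean, and the identification of the limit --- is already in hand from Lemma~\ref{lemma:spread_out}, Theorem~\ref{thm:renewal} and Theorem~\ref{thm:statmeas_renewal}.
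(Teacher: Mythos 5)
Your proof matches the paper's: both treat $(H_t,V_t)$ as a (delayed) classical regenerative process, verify the inter-renewal law is spread out via Lemma~\ref{lemma:spread_out} and has finite mean via Theorem~\ref{thm:renewal}, and then invoke Theorem~3.3(b), Ch.~10 of \cite{thor}. The extra steps you spell out --- the a.s.\ finiteness of the delay from an arbitrary initial state, and the identification of the ergodic limit with $\pi$ through the Ces\`aro limit in Theorem~\ref{thm:statmeas_renewal} --- are left implicit in the paper's one-paragraph proof, but they do not represent a different route, only additional care.
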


\begin{proof}
This is a consequence of Lemma \ref{lemma:spread_out} and Theorem 3.3, Ch. 10 of \cite{thor}. Using the terminology from that reference: The process $(H, V)$ is classical regenerative by the strong Markov property and \eqref{eqn:renewal_def}, \eqref{eqn:renewal_sequence}. Since $\prob_{(h, \nu)}\left(\zeta_1 - \zeta_0 \in \cdot\right) = \prob_{\mathsmaller{\renewalpt}}\left(\zeta \in \cdot\right)$, Lemma \ref{lemma:spread_out} shows the inter-regeneration time $\zeta_1 - \zeta_0$ is `spread out.' The proof is completed by Theorem 3.3 (b), Ch. 10 of \cite{thor}.
\end{proof}
The following Lemma establishes several structural properties for the process $(H_t,V_t)_{t \ge 0}$. We refer the reader to \cite{stability} for definitions of resolvent kernels, $\pi$-irreducibility, `petite' sets and Harris recurrence that appear in the following Lemma.
\begin{lemma} \label{lemma:structural} Define the (discrete time) resolvent transition kernel $R\left((h, \nu), \, \cdot \,\right) \coloneqq \int_0^\infty e^{-t}P^t\left((h, \nu),\>\cdot \>\right)\, dt$, and the set $\Lambda = [0, 1]\times\left[-\frac{g + g/2(1+\gamma)}{1+\gamma}, \> g/\gamma\right]$. The following hold:
\begin{enumerate}
    \item[(a)] For any measurable set $A$, $\pi(A) > 0$ implies $R((h, \nu), A) > 0$ for any $(h, \nu) \in S$.
    \item[(b)] For any measurable set $A$, $\pi(A) > 0$ implies $\int_0^\infty P^t\left((h, \nu),\>\cdot \>\right)\, dt > 0$ for any $(h, \nu) \in S$. In other words, the process is $\pi$-irreducible.
    \item[(c)] There exists an $\alpha > 0$ and a non-trivial measure $\mu$, equivalent to $\pi$, such that $R((h, \nu), A) \geq \alpha \, \mu(A)$ for all $(h, \nu) \in \Lambda$ and all measurable $A$. In particular, $\Lambda$ is a `petite' set.
    \item[(d)] The process $(H_t,V_t)_{t \ge 0}$ is Harris recurrent, and since the invariant measure $\pi$ is finite the process is positive Harris recurrent. 
\end{enumerate}
\end{lemma}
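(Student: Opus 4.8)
\medskip
\noindent\textbf{Proof strategy.}
The whole lemma will be deduced from properties of the first hitting time of the renewal point, $\rho := \inf\{t\geq 0 \,:\, (H_t,V_t) = \renewalpt\}$, which satisfies $\rho \leq \zeta_0 < \infty$ $\prob_{(h,\nu)}$-a.s.\ for every $(h,\nu)\in S$ (by the construction in Section \ref{strep}: $\tau_a^V\wedge\tau_b^V<\infty$ a.s.\ by Lemma \ref{lemma:vel_escape_interval}, and then $\renewalpt$ is reached in finite time by the arguments of Lemmas \ref{lemma:renewal_tail_from_below}--\ref{lemma:renewal_tail_2nd_below} applied via the strong Markov property). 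Splitting the resolvent integral at $\rho$, discarding the non-negative contribution from $[0,\rho)$, and applying the strong Markov property (Theorem \ref{thm:existence}) at $\rho$ gives, for every $(h,\nu)\in S$ and measurable $A$,
\begin{equation}\label{eqn:struct_resolvent_lb}
    R\big((h,\nu),A\big) \;\geq\; \expec_{(h,\nu)}\!\big[e^{-\rho}\big]\; R\big(\renewalpt,A\big).
\end{equation}
Since $\rho<\infty$ a.s., the prefactor $\expec_{(h,\nu)}[e^{-\rho}]$ is strictly positive, so for (a) it remains to see $R(\renewalpt,A)>0$ whenever $\pi(A)>0$. By Fubini and Theorem \ref{thm:statmeas_renewal},
\[
    \pi(A)\,\expec_{\mathsmaller{\renewalpt}}(\zeta) = \expec_{\mathsmaller{\renewalpt}}\!\Big(\int_0^\zeta \indi{(H_s,V_s)\in A}\,ds\Big) = \int_0^\infty \prob_{\mathsmaller{\renewalpt}}\big((H_s,V_s)\in A,\ s<\zeta\big)\,ds,
\]
so if $\pi(A)>0$ then $P^s(\renewalpt,A)>0$ on a set of $s$ of positive Lebesgue measure, whence $R(\renewalpt,A)=\int_0^\infty e^{-s}P^s(\renewalpt,A)\,ds>0$; this proves (a), and (b) follows at once since $R((h,\nu),A)>0$ forces $\int_0^\infty P^t((h,\nu),A)\,dt>0$.

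For (c), put $\mu := R(\renewalpt,\cdot)$, a probability measure on $S$, and first check $\mu$ is equivalent to $\pi$: one direction is the displayed computation ($\pi(A)>0\Rightarrow\mu(A)>0$), while if $\pi(A)=0$ then $\int_0^\zeta\indi{(H_s,V_s)\in A}\,ds=0$ $\prob_{\mathsmaller{\renewalpt}}$-a.s., and the i.i.d.\ cycle decomposition \eqref{eqn:renewal_sequence} (all cycles started from $\renewalpt$, $\zeta_{-1}=0$) upgrades this to $\int_0^\infty\indi{(H_s,V_s)\in A}\,ds=0$ a.s., so $\mu(A)=\expec_{\mathsmaller{\renewalpt}}\big[\int_0^\infty e^{-s}\indi{(H_s,V_s)\in A}\,ds\big]=0$. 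With $\alpha := \inf_{(h,\nu)\in\Lambda}\expec_{(h,\nu)}[e^{-\rho}]$, the asserted minorization $R((h,\nu),\cdot)\geq\alpha\,\mu(\cdot)$ on $\Lambda$ --- and hence petiteness of $\Lambda$ --- follows from \eqref{eqn:struct_resolvent_lb} as soon as we show
\begin{equation}\label{eqn:struct_uniform_reach}
    \inf_{(h,\nu)\in\Lambda}\expec_{(h,\nu)}\!\big[e^{-\rho}\big] \;\geq\; e^{-T_0}\inf_{(h,\nu)\in\Lambda}\prob_{(h,\nu)}\big(\rho\leq T_0\big) \;>\;0
\end{equation}
for some finite $T_0=T_0(\gamma,g)$. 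Proving \eqref{eqn:struct_uniform_reach} is the main obstacle. Note $\Lambda=[0,1]\times\big[-\tfrac{g+g/2(1+\gamma)}{1+\gamma},\,g/\gamma\big]$ is a \emph{compact} subset of $S$ (its velocity range is a closed subinterval of $(-g/\gamma,\infty)$) and contains $\renewalpt$. I would establish \eqref{eqn:struct_uniform_reach} by constructing, for each starting point in $\Lambda$, an event of probability bounded below uniformly over $\Lambda$ on which $(H,V)$ reaches $\renewalpt$ within time $T_0$: since $h\leq1$ and the velocity starts in a fixed compact interval, one forces the Brownian increments so that (distinguishing $h>0$ from $h=0$, and the velocity lying above or below $-g/(1+\gamma)$) the velocity is first driven down to the level $a$ before reaching $b$ --- using \eqref{eqn:system_starts_interior} and \eqref{eqn:hitting_time_velocity} to control the deterministic decrease of $V$ in the interior and the hitting-time estimates of Appendix \ref{hittimeest} --- and the gap is then pushed back to $0$, so that $V$ climbs up to $-g/(1+\gamma)$ on the boundary and thereby attains $\renewalpt$ by \eqref{eqn:increase_on_boundary_only}; each sub-event has probability bounded below by a Gaussian probability depending only on $\gamma,g$. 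Alternatively, \eqref{eqn:struct_uniform_reach} can be obtained softly: $\rho<\infty$ a.s.\ gives $\prob_{(h,\nu)}(\rho\leq T)\uparrow1$ pointwise, and lower semicontinuity of $(h,\nu)\mapsto\prob_{(h,\nu)}(\rho\leq T)$ --- via continuity of the solution map $u\mapsto U(u)$ from Theorem \ref{thm:existence} --- together with compactness of $\Lambda$ promotes this to the desired uniform bound.

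Finally, for (d): the process is $\pi$-irreducible by (b), $\Lambda$ is petite by (c), $\renewalpt\in\Lambda$, and from every starting point the process reaches $\renewalpt$ a.s.\ and, by the renewal structure \eqref{eqn:renewal_sequence} (all $\zeta_n$ finite and $\zeta_n\to\infty$), returns to $\renewalpt$, hence to $\Lambda$, infinitely often a.s. Thus $\Lambda$ is a petite set visited infinitely often from every state, so by the standard theory for continuous-time $\psi$-irreducible Markov processes (\cite{stability}, cf.\ \cite{meyntweedie}) the process is Harris recurrent; since $\pi$ is a finite invariant measure (Theorem \ref{thm:statmeas_renewal}), it is positive Harris recurrent.
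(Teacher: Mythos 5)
Your argument tracks the paper's structure closely: decompose the resolvent via the renewal structure around $\renewalpt$, bound it below by a prefactor times a fixed non-trivial measure equivalent to $\pi$, and show the prefactor is bounded away from zero on $\Lambda$. The bookkeeping differs slightly — you apply the strong Markov property at the first hitting time $\rho$ of $\renewalpt$ and take $\mu=R\left(\renewalpt,\cdot\right)$, while the paper applies it at the renewal sequence $\{\zeta_n\}$ and takes $\mu$ to be the resolvent integrated only over one renewal cycle $[0,\zeta)$ — but these are interchangeable, and your i.i.d.\-cycle argument for the equivalence $\mu\sim\pi$ is correct. Parts (a), (b), (d) are in line with the paper's treatment.

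The one genuine gap is in your second, ``soft'' route to the uniform bound $\inf_{(h,\nu)\in\Lambda}\expec_{(h,\nu)}\,e^{-\rho}>0$. Continuity of the solution map $u\mapsto U(u)$ in $\C(\Real_+,S)$ does \emph{not} give lower semicontinuity of $(h,\nu)\mapsto\prob_{(h,\nu)}\left(\rho\leq T\right)$: the event ``the path visits the single point $\renewalpt$ by time $T$'' is not open in path space, and a sequence $U(u_n)\to U(u)$ uniformly on $[0,T]$ can miss $\renewalpt$ entirely even when $U(u)$ hits it exactly. (Hitting a fixed level or point is generically only upper semicontinuous along the time axis, and is not stable under sup-norm perturbations of the path; the special structure here — $V$ reaches $-g/(1+\gamma)$ from below only when $H=0$ — does not rescue the argument, because the perturbed local time need not force the perturbed velocity across $-g/(1+\gamma)$ within $[0,T]$.) So this route must be dropped. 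Your first suggestion — a direct construction, for each $(h,\nu)\in\Lambda$, of an event of probability bounded below uniformly on which the process reaches $\renewalpt$ within a fixed $T_0(\gamma,g)$ — is the right idea, but it is only a sketch here. The paper instead observes (also tersely, but pointing to a workable argument) that the tail estimates of Theorem \ref{thm:renewal} and the hitting-time lemmas in Appendix \ref{hittimeest} hold with constants uniform over bounded sets of initial conditions, which yields $\inf_{(h,\nu)\in\Lambda}\expec_{(h,\nu)}e^{-\zeta}>0$ directly; spelling out that uniformity, or carrying out your explicit path construction, is what would close the gap.
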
 

\begin{proof}
Using the definition of successive renewal times $\zeta := \zeta_0$ and $\{\zeta_n\}_{n \ge 0}$ given in \eqref{eqn:renewal_def}, we apply the strong Markov property at $\zeta_n, n \geq 0$ to show for any measurable set $A$, with $\F_{\zeta_n}$ denoting the stopped filtration for $(H, V)$, 
\begin{multline}\label{eqn:resolvent_structural0}
    \expec_{(h, \nu)} \> \left(\int_{\zeta_n}^{\zeta_{n+1}} e^{-t} \indi{(H_t, V_t ) \in A}\, dt \> | \> \F_{\zeta_n} \right) = \int_{\zeta_n}^\infty e^{-t} \> \mathbb{E}_{\mathsmaller{\renewalpt}} \> \left(\indi{\zeta > t - \zeta_n, (H_{t-\zeta_n}, V_{t-\zeta_n} ) \in A} \right)\, dt \\
    = e^{-\zeta_n}\>\expec_{\mathsmaller{\renewalpt}} \> \left(\int_0^{\zeta} e^{-t} \indi{(H_t, V_t ) \in A}\, dt \right).
\end{multline}
Define $\mu(A) \coloneqq \expec_{\renewalpt} \left(\int_{0}^{\zeta} e^{-t} \indi{(H_t, V_t ) \in A}\, dt\right)$. Taking expectations in \eqref{eqn:resolvent_structural0} and summing over $n$ we have,
\begin{multline}\label{eqn:resolvent_structural}
    R\left((h, \nu), \, A \,\right) = \expec_{(h, \nu)} \left( \> \int_0^{\zeta} e^{-t} \indi{(H_t, V_t ) \in A}\, dt\right) + \sum_{n=0}^\infty \expec_{(h, \nu)} \> \int_{\zeta_n}^{\zeta_{n+1}} e^{-t} \indi{(H_t, V_t ) \in A}\, dt \\ 
    = \expec_{(h, \nu)} \> \left(\int_0^{\zeta} e^{-t} \indi{(H_t, V_t ) \in A}\, dt\right) + \mu(A) \> \sum_{n=0}^\infty \expec_{(h, \nu)} \> e^{-\zeta_n}.
\end{multline}
The form of $\pi$ given in Theorem \ref{thm:statmeas_renewal} shows the measures $\pi$ and $\mu$ are equivalent. Thus, \eqref{eqn:resolvent_structural} proves (a), and (b) is an immediate consequence of (a).

The arguments used to prove Theorem \ref{thm:renewal} are valid for any starting point $(h, \nu)$ other than $\renewalpt$: Since the process started from $(h, \nu)$ can reach the points $b, a$ used in \eqref{eqn:def_stop_times_above}, \eqref{eqn:def_stop_times_below} in finite time, the analysis to show bounds as in Theorem \ref{thm:renewal} goes through for any initial conditions. Therefore, $\expec_{(h, \nu)} e^{-\zeta} > 0$ for all $(h, \nu)$ and is bounded below on bounded sets. Setting $\alpha = \inf_{(h, \nu) \in \Lambda } \sum_{n=0}^\infty \expec_{(h, \nu)} \> e^{-\zeta_n} $, \eqref{eqn:resolvent_structural} gives (c). Note that setting $A = S$ in \eqref{eqn:resolvent_structural} shows $\alpha = \left(1 -  \expec_{\renewalpt} \> e^{-\zeta} \right)^{-1}\>\inf_{(h, \nu) \in \Lambda } \expec_{(h, \nu)} \> e^{-\zeta}$.

Since, as noted in the previous paragraph, $\prob_{(h, \nu)}\left(\zeta < \infty \right) = 1$ for all $(h, \nu) \in S$ and $\renewalpt$ is contained in $\Lambda$, we have $\prob_{(h, \nu)}\left(\tau_\Lambda < \infty \right) = 1$. Moreover, part (c) of the lemma shows that the set $\Lambda$ is \emph{petite} in the sense of Section 4 in \cite{stability}. These two observations imply (d) by Theorem 4.3 (ii) of \cite{stability}.
\end{proof}

\begin{lemma} \label{lemma:lyapunov_exponential_moment} For $\Lambda$ as in Lemma \ref{lemma:structural}, there exists a continuous function $F$ such that,
\begin{eqnarray} \label{eqn:lyap_expo_moment}
    \expec_{(h, \nu)} \, e^{\eta \tau_\Lambda(1)} \leq F(h, \nu),  \quad \text{for all }\> \eta < \frac{1}{16}(g/(1+\gamma))^2, \> (h, \nu) \in S,
\end{eqnarray}
where $\tau_\Lambda(1) \coloneqq \left\{t \geq 1 \> : \> \left(H_t, V_t\right) \in \Lambda\right\}$. In particular, the function $\expec_{(h, \nu)} \, e^{\eta \tau_\Lambda(1)} $ is finite for all $(h, \nu)$ and bounded on $\Lambda$, uniformly for $\eta < \frac{1}{16}(g/(1+\gamma))^2$.
\end{lemma}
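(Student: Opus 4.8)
The plan is to reduce the statement to an exponential-moment bound for the \emph{ordinary} hitting time $\tau_\Lambda=\inf\{t\ge 0:(H_t,V_t)\in\Lambda\}$, and then to bound $\tau_\Lambda$ by splitting a trajectory into a phase that reaches the ``good'' part of the boundary and a single velocity climb. Write $\eta_0=\tfrac1{16}(g/(1+\gamma))^2$. By the Markov property at time $1$,
\[
\expec_{(h,\nu)}e^{\eta\,\tau_\Lambda(1)}=e^{\eta}\,\expec_{(h,\nu)}\!\Big[\expec_{(H_1,V_1)}e^{\eta\,\tau_\Lambda}\Big],
\]
so it suffices to produce a continuous $\Psi:S\to[1,\infty)$, growing at most exponentially in $(h,\nu)$, with $\expec_{(h,\nu)}e^{\eta_0\tau_\Lambda}\le\Psi(h,\nu)$. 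Then, since \eqref{eqn:system}, \eqref{sm} and \eqref{eqn:velocity_lower_bounded} bound $V_1,H_1$ above by continuous functions of $(h,\nu)$ plus sub-Gaussian functionals of $(B_s)_{s\le1}$ (for instance $-g/\gamma<V_1\le \nu+g/\gamma+\sup_{s\le1}B_s$, and similarly for $H_1$), the map $(h,\nu)\mapsto e^{\eta_0}\expec_{(h,\nu)}\Psi(H_1,V_1)$ is finite and continuous (using continuity of $(h,\nu)\mapsto U(h,\nu)$ from Theorem \ref{thm:existence} together with dominated convergence) and dominates $\expec_{(h,\nu)}e^{\eta\tau_\Lambda(1)}$ for every $\eta<\eta_0$; this will be the required $F$, and its boundedness on $\Lambda$ follows from continuity and compactness.

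It remains to bound $\Psi(h,\nu):=\expec_{(h,\nu)}e^{\eta_0\tau_\Lambda}$. I decompose according to $T^\star:=\inf\{t\ge0:H_t=0,\ V_t\le g/\gamma\}$, the first time the gap vanishes with velocity no larger than the top of $\Lambda$'s velocity window. For the first stage, \eqref{eqn:system_starts_interior} gives $H_t\le h+\nu/\gamma+g/\gamma^2-B_t-tg/\gamma$ for $t<\sigma(0)$, so $\sigma(0)$ is dominated by the first passage of a Brownian motion with drift $-g/\gamma$ to level $-(h+\nu/\gamma+g/\gamma^2)$, whose moment generating function is finite up to rate $\tfrac12(g/\gamma)^2>\eta_0$ and equals $\exp\{(h+\nu/\gamma+g/\gamma^2)(g/\gamma-\sqrt{(g/\gamma)^2-2\eta_0})\}$; moreover $V_{\sigma(0)}=(\nu+g/\gamma)e^{-\gamma\sigma(0)}-g/\gamma\in(-g/\gamma,\nu]$. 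If $V_{\sigma(0)}>g/\gamma$ I invoke Lemma \ref{lemma:vel_drop_above_tailbound} to bring the velocity below $g/\gamma$ within a time with exponential tail of rate exceeding $\eta_0$ (the boundary restoring drift $-((1+\gamma)V+g)$ pushes $V$ downward), control the gap at that instant via \eqref{eqn:system}, and then re-apply the first-passage estimate once more, now with an initial velocity bounded by $g/\gamma$ and hence with a bound uniform in the earlier randomness, to reach $\{H=0,\ V\le g/\gamma\}$. This yields $\expec_{(h,\nu)}e^{\eta_0 T^\star}\le\Psi_1(h,\nu)$ with $\Psi_1$ continuous and at most exponential in $(h,\nu)$.

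At $T^\star$ the process sits at $(0,V_{T^\star})$ with $V_{T^\star}\le g/\gamma$. If $V_{T^\star}\ge a':=-\frac{g+g/2(1+\gamma)}{1+\gamma}$ then $(0,V_{T^\star})\in\Lambda$ and $\tau_\Lambda\le T^\star$. Otherwise $V_{T^\star}\in(-g/\gamma,a')$ and a single upward climb of the velocity to $a'$ suffices: the velocity can only increase on the boundary, so the instant $V$ first equals $a'$ one has $H\equiv0\le1$ and the process is in $\Lambda$. Lemma \ref{lemma:vel_increase_below_renewal_tailprob} (taken with $\epsilon_0=g/2(1+\gamma)$, $h=0$) bounds $\tau^V_{a'}$ started from $(0,v)$ by a random time with exponential tail of rate strictly above $\eta_0$, uniformly over $v\in(-g/\gamma,a')$: the climb distance $a'-v$ is bounded by $a'+g/\gamma$, and the rate does not degrade as $v\downarrow-g/\gamma$ because the boundary drift is most strongly upward precisely when $V$ is near $-g/\gamma$. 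The strong Markov property at $T^\star$ then gives $\Psi(h,\nu)\le\Psi_1(h,\nu)\cdot\sup_{v\in(-g/\gamma,a')}\expec_{(0,v)}e^{\eta_0\tau^V_{a'}}<\infty$, completing the bound. I stress that it is essential here that $a'$ is approached from below along the boundary: there is no overshoot past $\Lambda$ and hence no \emph{geometric} number of attempts, which is exactly why $\tau_\Lambda$ has a genuine exponential moment even though the return time $\zeta$ to the single point $\renewalpt$ is only stretched-exponentially integrable (Theorem \ref{thm:renewal}).

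The main obstacle is obtaining the exponential-tail estimates for the two stages \emph{uniformly and continuously} in the initial data, and especially the velocity-climb estimate, which must show that the boundary local time accrues at a rate strictly exceeding the loss from the restoring drift, so that the velocity reaches $a'$ in a time possessing a true exponential moment; this is the substance of the cited appendix lemma. A secondary point is keeping the gap coordinate under control while the velocity is driven down from above, and verifying that each phase rate strictly exceeds $\tfrac1{16}(g/(1+\gamma))^2$, so that the single threshold $\eta_0$ works across all of them.
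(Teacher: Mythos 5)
Your plan is sound and shares the paper's core tools, but the decomposition is organized differently and two points deserve scrutiny.

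\textbf{Comparison to the paper.} The paper does not funnel the trajectory through $\{H=0,\ V\le g/\gamma\}$ and then climb; instead it partitions the \emph{initial conditions} into three spatial regions relative to $\Lambda$ (``left'': $\nu<\tilde a$, handled by $\tau_{\tilde a}^V=\tau_\Lambda$ and Lemma~\ref{lemma:vel_increase_below_renewal_tailprob}; ``top'': $h>1,\ \nu\in(\tilde a,g/\gamma]$, handled by the fact that the process either enters through $H=1$ deterministically fast or drops its velocity to $\tilde a$; ``right'': $\nu>g/\gamma$, handled by Lemma~\ref{lemma:velocity_tail_bounded_by_exponential} followed by a reduction to the top region). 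Your two-phase temporal decomposition (reach $T^\star$, then a single upward climb of $V$ to $a'$ on the boundary) is a legitimate alternative, and your observation that there is no overshoot of $\Lambda$ on the climb --- hence no geometric number of attempts, which is why a genuine exponential moment is obtainable --- is exactly the right structural point and the reason the method works for $\Lambda$ but not for the single point $\renewalpt$.

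\textbf{Two issues.} First, to bring $V$ down from above $g/\gamma$ you cite Lemma~\ref{lemma:vel_drop_above_tailbound}, but that lemma treats downward crossing inside the interval $(-\frac{g}{1+\gamma},0)$ from a start at the boundary; the tool you actually need for large positive velocities is Lemma~\ref{lemma:velocity_tail_bounded_by_exponential}, which dominates $V$ by $\nu-t(\gamma u+g)$ plus the Brownian running maximum. Second --- and this is the substantive gap --- when you say ``control the gap at that instant via~\eqref{eqn:system}, and then re-apply the first-passage estimate\ldots with a bound uniform in the earlier randomness,'' you are skating past the hardest estimate in the proof. The gap $H_{\tau_{g/\gamma}^V}$ is \emph{not} deterministically bounded; you must show it has a suitable exponential moment and then decouple it from the exponential moment of $\tau_{g/\gamma}^V$. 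The paper does this via the identity $H_t+V_t/\gamma=h+\nu/\gamma-B_t-tg/\gamma+(1+1/\gamma)L_t$ (see~\eqref{eqn:lyapunov_from_right3}), which dominates $H_{\tau_{g/\gamma}^V}$ by a sum of independent exponential variables of rate $2g/\gamma$, followed by Cauchy--Schwarz to separate $e^{\eta\tau_{g/\gamma}^V}$ from $e^{cH_{\tau_{g/\gamma}^V}}$~\eqref{eqn:lyapunov_from_right2}. Your constant $c$ is (up to a factor $2$ from Cauchy--Schwarz) $g/\gamma-\sqrt{(g/\gamma)^2-2\eta_0}\le 2\eta_0\gamma/g$, and one must check that $(1+1/\gamma)\cdot2c<2g/\gamma$; this is where the threshold $\eta_0=\tfrac1{16}(g/(1+\gamma))^2$ enters and why it cannot be replaced by the naive $\tfrac12(g/\gamma)^2$ you mention for the first-passage step alone. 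These constant-tracking checks should appear explicitly; otherwise the ``uniform in $\eta<\eta_0$'' claim in the lemma statement is not verified.
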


\begin{figure}
    \centering
    \includegraphics{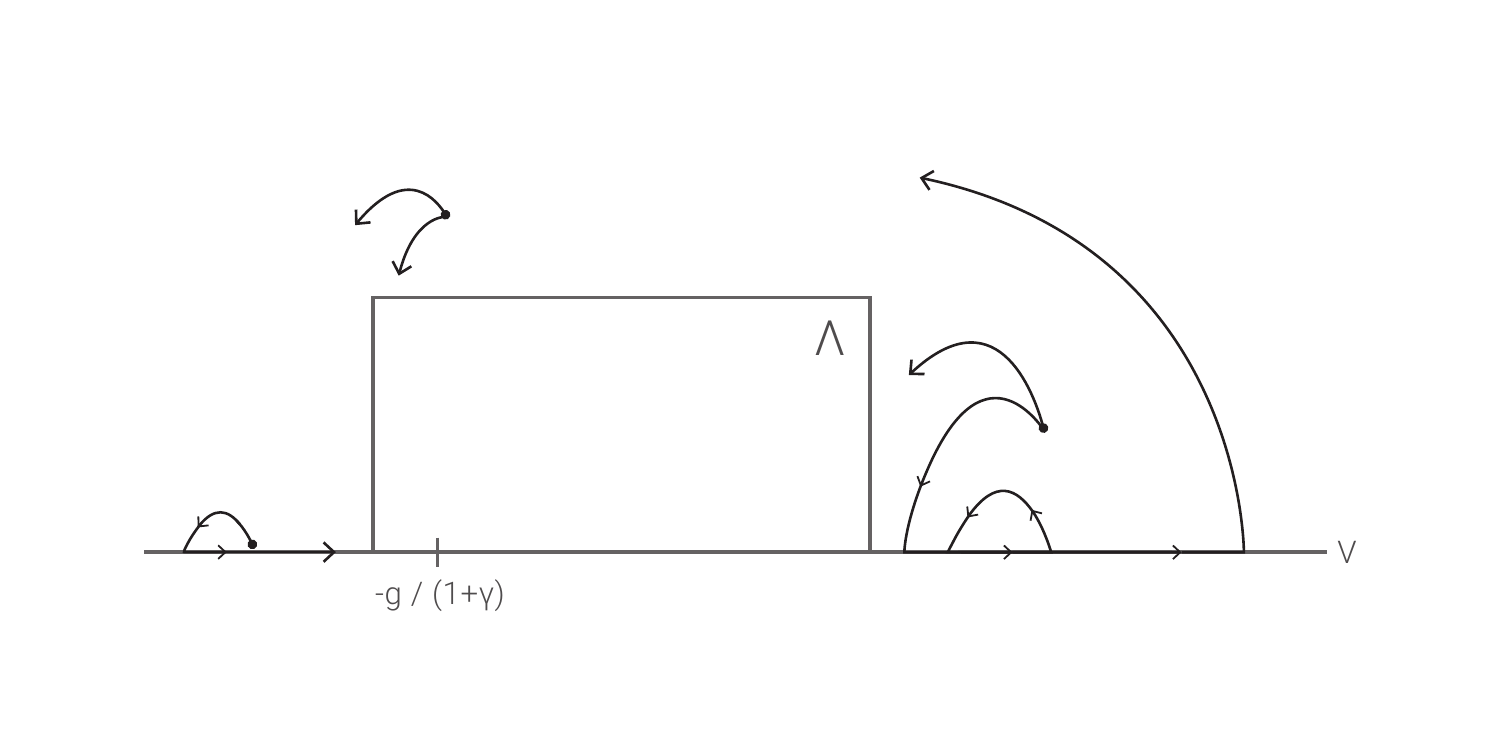}
    \caption{Process with initial conditions in left, top, right regions relative to $\Lambda$.}
    \label{fig:exponential_moment}
\end{figure}

\begin{proof} 
Throughout the proof, the terms $c, c' > 0$ will denote constants dependent only on $g, \gamma$, not $\eta$, whose values might change from line to line. We will define a range of $\eta$ to satisfy the theorem, giving upper bounds on such admissible $\eta$ as needs arise. 

First we bound $\expec_{(h, \nu)} \, e^{\eta \tau_\Lambda}$ for $(h, \nu) \not\in \Lambda$, recalling $\tau_\Lambda = \inf\{t \geq 0  \> : \> \left(H_t, V_t\right) \in \Lambda \}$. We consider three regions for initial conditions $(h, \nu)$: the left, the top and the right of the box $\Lambda$.

Define $\tilde{a} =  -(g + g/2(1+\gamma))/(1+\gamma)$. Consider initial conditions to the left of $\Lambda$, that is $(H_0, V_0) = (h, \nu)$ for $h \geq 0$ and $\nu \in (-g/\gamma, \tilde{a})$. Recall \eqref{eqn:increase_on_boundary_only} says $V$ increases only on the set $\{s: \, H_s = 0 \}$. This implies $\tau_{\tilde{a}}^V = \tau_\Lambda$. Lemma \ref{lemma:vel_increase_below_renewal_tailprob} with $\epsilon_0 = g/2(1+\gamma)$ and $u = \tilde{a}$ show,
\begin{equation} \label{eqn:lyapunov_from_left}
    \underset{\nu \in (-g/\gamma, \tilde{a})}{\sup}\expec_{(h, \nu)} \, e^{\eta \tau_\Lambda} \leq e^c e^{4 \eta (1+\gamma ) h / g}, \quad h \ge 0, \ \nu \in (-g/\gamma, \tilde{a}), \ \eta < \frac{1}{16}\left(g/(1+\gamma) \right)^2.
\end{equation}
Consider $V_0 = \tilde{a}$ and $H_0 > 1$. \eqref{eqn:increase_on_boundary_only} again shows $\tau_{\tilde{a}}^{V +} = \inf \{t > 0 \>:\> V_t = \tilde{a} \} = \tau_\Lambda $. Remark \ref{tauplus} after Lemma \ref{lemma:vel_increase_below_renewal_tailprob} gives,
\begin{equation} \label{eqn:lyapunov_from_left_plus}
    \expec_{(h, \tilde{a})} \, e^{\eta \tau_\Lambda} \leq e^c e^{4 \eta (1+\gamma ) h / g}, \quad h > 1, \> \eta < \frac{1}{16}\left(g/(1+\gamma) \right)^2.
\end{equation}
Now take $h > 1, \nu \in (\tilde{a}, \frac{g}{\gamma}]$, that is, initial conditions above $\Lambda$. Since $V$ cannot increase before $H$ hits zero, there are only two ways the process can enter $\Lambda$ as depicted in Figure \ref{fig:exponential_moment}: Through the top of the rectangle, when $\tau^H_1 \le \tau_{\tilde{a}}^V$, or if the velocity drops below $\tilde{a}$ and the process enters $\Lambda$ when $V$ next rises to hit $\tilde{a}$. 
\eqref{eqn:system_starts_interior} shows $\left\{\tau^H_1 \le \tau_{\tilde{a}}^V, \,\, \tau_1^H > \frac{1}{\gamma} \log \left(\frac{\nu + g/\gamma}{\tilde{a} + g/\gamma} \right)\right\} = \varnothing$. As a result, 
\begin{equation} \label{eqn:lyapunov_from_top0}
    \expec_{(h, \nu)} \> \left( e^{\eta \, \tau_\Lambda} \indi{\tau^H_1 \le \tau_{\tilde{a}}^V}  \right) = \expec_{(h, \nu)} \> \left( e^{\eta \, \tau^H_1} \indi{\tau^H_1 \le \tau_{\tilde{a}}^V}  \right) \leq \left(\frac{\nu + g/\gamma}{\tilde{a} + g/\gamma}\right)^{\eta/\gamma}\leq e^c,
\end{equation}
for $\tilde{a} < \nu \leq g/\gamma$ and $\eta < \frac{1}{16}\left(g/(1+\gamma) \right)^2$. In addition, for the same range on $\nu,\eta$, using the strong Markov property at $\tau_{\tilde{a}}^V$ and \eqref{eqn:lyapunov_from_left_plus},
\begin{equation} \label{eqn:lyapunov_from_top1}
    \expec_{(h, \nu)} \, \left( e^{\eta \, \tau_\Lambda} \indi{\tau_{\tilde{a}}^V < \tau^H_1} \right) \leq e^c \> \expec_{(h, \nu)} \, \left(\indi{\tau_{\tilde{a}}^V < \tau^H_1}  e^{(4 \eta (1+\gamma )/g) H_{\tau_{\tilde{a}}^V}}\right).
\end{equation}
Now \eqref{eqn:system_starts_interior} implies $H_t \leq h + c' + \sup_{t < \infty}\left(-B_t - t\frac{g}{\gamma}\right)$ for $t < \tau_{\tilde{a}}^V\wedge \tau_1^H$. Since $\sup_{t < \infty}\left(-B_t - t\frac{g}{\gamma}\right)$ is exponentially distributed with mean $\frac{\gamma}{2g}$, \eqref{eqn:lyapunov_from_top1} gives,
\begin{equation} \label{eqn:lyapunov_from_top2}
    \expec_{(h, \nu)} \, \left( e^{\eta \, \tau_\Lambda} \indi{\tau_{\tilde{a}}^V < \tau^H_1} \right) \leq e^c e^{4 \eta (1+\gamma ) h / g}.
\end{equation}
for any $h > 1$, $\tilde{a} < \nu \le g/\gamma$ and $\eta < \frac{1}{16}\left(g/(1+\gamma) \right)^2$. From \eqref{eqn:lyapunov_from_top0} and \eqref{eqn:lyapunov_from_top2}, we obtain
\begin{equation}\label{topbd}
\expec_{(h, \nu)} \, \left( e^{\eta \, \tau_\Lambda}\right) \le e^c e^{4 \eta (1+\gamma ) h / g}, \quad h > 1, \ \tilde{a} < \nu \le g/\gamma, \ \eta < \frac{1}{16}\left(g/(1+\gamma) \right)^2.
\end{equation}
Lastly we consider the region to the right of $\Lambda$, where $\nu > \frac{g}{\gamma}$. When the velocity hits $g/\gamma$, either it is in $\Lambda$ or $H > 1$ and the process has `jumped over' the box's right edge. See Figure \ref{fig:exponential_moment}. Lemma \ref{lemma:velocity_tail_bounded_by_exponential} with $u = g/\gamma$ shows
\begin{equation} \label{eqn:lyapunov_from_right1}
    \expec_{(h, \nu)} \, \left(e^{\eta \tau_\Lambda} \indi{H_{\tau_{g/\gamma}^V} \leq 1} \right) \leq \expec_{(h, \nu)} \, e^{\eta \, \tau_{g/\gamma}^V} \leq e^c \, e^{2\frac{g}{\gamma}\nu},
\end{equation}
for $\eta < 2g^2 / \gamma$. In addition, the strong Markov property at $\tau_{g/\gamma}^V$, along with \eqref{topbd}, \eqref{eqn:lyapunov_from_right1} and the Cauchy-Schwarz inequality imply,
\begin{multline} \label{eqn:lyapunov_from_right2}
    \expec_{(h, \nu)} \, \left(e^{\eta \tau_\Lambda} \indi{H_{\tau_{g/\gamma}^V} > 1} \right) \leq e^c \> \expec_{(h, \nu)} \, \left(e^{\eta \tau_{g/\gamma}^V + \left(4 \eta (1+\gamma )/ g\right)H_{\tau_{g/\gamma}^V}} \right)\\
    \le e^c\sqrt{\expec_{(h, \nu)} \, \left(e^{2\eta \tau_{g/\gamma}^V}\right)}\sqrt{\expec_{(h, \nu)} \, e^{\left(8 \eta (1+\gamma )/ g\right) H_{\tau_{g/\gamma}^V}}}
    \leq e^c \> e^{2\nu \frac{g}{\gamma}} \>  \sqrt{\expec_{(h, \nu)} \, e^{\left(8 \eta (1+\gamma )/ g\right) H_{\tau_{g/\gamma}^V}}},
\end{multline}
for $\eta < g^2/\gamma$ sufficiently small that the second term exists. We now define the allowable range of $\eta$ and bound the second term in \eqref{eqn:lyapunov_from_right2}. Using \eqref{eqn:system}, we have for all $(h, \nu) \in [0, \infty) \times (g/\gamma, \infty)$ and $t < \tau_{g/\gamma}^V$,
\begin{equation}\label{eqn:lyapunov_from_right3}
    H_t + V_t / \gamma = h + \nu /\gamma - B_t - tg/\gamma + (1+1/\gamma)L_t \leq h + \nu /\gamma + \sup_{t < \infty} \left( - B_t - tg/\gamma \right) + (1+1/\gamma)\sup_{t < \infty}\left(B_t - tg/\gamma \right).
\end{equation}
If $\eta$ is small enough that $(1+1/\gamma ) \times 16 \eta (1+\gamma )/ g < g/\gamma$, in other words $\eta < \frac{1}{16}(g/(1+\gamma))^2$, then since $E_1^* := \sup_{t < \infty}\left(-B_t - tg/\gamma \right)$ and $E_2^* := \sup_{t < \infty}\left(B_t - tg/\gamma \right)$ are exponentially distributed with mean $\gamma / 2g$, \eqref{eqn:lyapunov_from_right3} gives 
\begin{multline} \label{eqn:lyapunov_from_right4}
    \expec_{(h, \nu)} \, e^{\left(8 \eta (1+\gamma )/ g\right) \left(\gamma^{-1}V_{t \wedge \tau_{g/\gamma}^V} +  H_{t \wedge \tau_{g/\gamma}^V}\right)} \le e^{8\eta(1+\gamma)(\gamma h+\nu)/g\gamma}\expec \> e^{(8\eta(1+\gamma)/g)(E_1^* + (1+1/\gamma)E_2^*)}\\
    \le e^{8\eta(1+\gamma)(\gamma h+\nu)/g\gamma} \sqrt{\expec\> e^{(16\eta(1+\gamma)/g)E_1^*}}\sqrt{\expec\> e^{(16\eta(1+\gamma)(1+1/\gamma)/g)E_2^*}} \leq e^c e^{8\eta(1+\gamma)(\gamma h+\nu)/g\gamma},
\end{multline}
for any $t > 0$, where the third bound above follows from Cauchy-Schwarz inequality. By Fatou's lemma,
\begin{equation} \label{eqn:lyapunov_from_right5}
    \expec_{(h, \nu)} \, e^{ \left(8 \eta (1+\gamma )/ g\right) H_{\tau_{g/\gamma}^V}} \leq e^c e^{8\eta(1+\gamma)(\gamma h+\nu)/g\gamma}.
\end{equation}
Combining \eqref{eqn:lyapunov_from_right5} and \eqref{eqn:lyapunov_from_right2},
\begin{equation} \label{eqn:lyapunov_from_right6}
    \expec_{(h, \nu)} \, \left(e^{\eta \tau_\Lambda} \indi{H_{\tau_{g/\gamma}^V} > 1} \right) \leq e^c \> e^{2\nu \frac{g}{\gamma}} \>  e^{4\eta(1+\gamma)(\gamma h+\nu)/g\gamma},
\end{equation}
for $h \geq 0, \nu > g/\gamma$ and $\eta < g^2/\gamma \wedge \frac{1}{16}(g/(1+\gamma))^2 = \frac{1}{16}(g/(1+\gamma))^2$. From \eqref{eqn:lyapunov_from_right1} and \eqref{eqn:lyapunov_from_right6}, we obtain
\begin{equation}\label{rightbd}
\expec_{(h, \nu)} \, \left(e^{\eta \tau_\Lambda}\right) \leq e^c \> e^{2\nu \frac{g}{\gamma}} \>  e^{4\eta(1+\gamma)(\gamma h+\nu)/g\gamma}, \quad h \geq 0, \ \nu > g/\gamma, \ \eta < \frac{1}{16}(g/(1+\gamma))^2.
\end{equation}
Summarizing \eqref{eqn:lyapunov_from_left}, \eqref{eqn:lyapunov_from_left_plus}, \eqref{topbd} and \eqref{rightbd},
\begin{equation} \label{eqn:lyapunov_bound_semifinal}
   \expec_{(h, \nu)} \, e^{\eta \tau_\Lambda} \leq e^c \> e^{2\nu \frac{g}{\gamma} + 4\eta(1+\gamma)(\gamma h+\nu)/g\gamma}, \quad (h, \nu) \not \in \Lambda, \ \eta < \frac{1}{16}(g/(1+\gamma))^2.
\end{equation}
Notice that to introduce the $2\nu g/\gamma$ term in \eqref{eqn:lyapunov_bound_semifinal} when applying \eqref{eqn:lyapunov_from_left}, \eqref{eqn:lyapunov_from_left_plus}, \eqref{topbd}, we need only use the fact that $1 = e^{-2\nu g/\gamma} e^{2\nu g/\gamma} < e^{2(g/\gamma)^2} e^{2\nu g/\gamma}$, since $\nu > -g/\gamma$.  

We complete the proof of the theorem. By definition, $\tau_\Lambda(1) = 1 + \inf\{t \geq 0 \> | \> (H_{1 + t}, V_{1+t}) \in \Lambda \}$, so for any $(h, \nu)$ we have by \eqref{eqn:lyapunov_bound_semifinal},
\begin{multline} \label{eqn:lyapunov_bound_semifinal1}
   \expec_{(h, \nu)} \, e^{\eta \tau_\Lambda(1)} =  \expec_{(h, \nu)} \, \left(\indi{(H_{1}, V_{1}) \in \Lambda} \, e^{\eta \tau_\Lambda(1)}\right) + \expec_{(h, \nu)} \, \left(\indi{(H_{1}, V_{1}) \not \in \Lambda} \, e^{\eta \tau_\Lambda(1)}\right)\\
   \le e^\eta + e^\eta \expec_{(h, \nu)} \, \left(\indi{(H_{1}, V_{1}) \not \in \Lambda} \expec_{(H_1, V_1)} \,e^{\eta \tau_\Lambda}\right)
   \leq e^c \> \left( 1 + \expec_{(h, \nu)} \, e^{2V_1 \frac{g}{\gamma} + 4\eta(1+\gamma)(\gamma H_1 + V_1)/g\gamma}\right),
\end{multline}
for $\eta$ sufficiently small. We use the crude bounds $V_t \leq \nu + \sup_{s \leq 1}\left(B_s + s\frac{g}{\gamma}\right)$, $L_t \le \sup_{s \leq 1}\left(B_s + s\frac{g}{\gamma}\right)$ for $t \leq 1$ and so $H_1 \leq h + \int_0^1 V_t \, dt - B_1 + \sup_{s \leq 1}\left(B_s + s\frac{g}{\gamma}\right) \leq h + \nu - B_1 + 2\sup_{s \leq 1}\left(B_s + s\frac{g}{\gamma}\right)$. From this, \eqref{eqn:lyapunov_bound_semifinal1} and the finiteness of exponential moments of $-B_1$ and $\sup_{s\leq 1}\left(B_s + s\frac{g}{\gamma} \right)$ we calculate,
\begin{multline}\label{eqn:lyapunov_final}
    \expec_{(h, \nu)} \, e^{\eta \tau_\Lambda(1)} \leq e^c \, \left(1 + e^{2\nu \frac{g}{\gamma} + 4\eta(1+\gamma)(\gamma h+ (1+\gamma)\nu)/g\gamma}  \right)\leq e^c \, \left(1 + e^{9 \nu g/4\gamma  + h g/4(1+\gamma)}  \right) \coloneqq F(h, \nu),
\end{multline}
for $\eta < \frac{1}{16}(g/(1+\gamma))^2$ and any $(h,\nu) \in S$.
\end{proof}
The last ingredient needed in the proof of exponential ergodicity is a \emph{minorization condition} (e.g. Assumption 2 of \cite{hairandmat}, or \cite{ergodicity} for a different formulation): There exist $t_0, \alpha > 0$ and a non-trivial measure $\mu$ such that
\begin{equation}\label{eqn:minorization_cond}
    P^{t_0}((h, \nu), \> \cdot\>)) \geq \alpha \mu(\cdot),  \quad \quad (h, \nu) \in \Lambda.
\end{equation}
In essence, \eqref{eqn:minorization_cond} ensures two versions of the process started within $\Lambda$ can be coupled. 
Typically, \eqref{eqn:minorization_cond} is checked by showing $P^t$ has a continuous density with respect to Lebesgue measure for each $t>0$. As stated before, these techniques are not available in our setup. Instead Lemmas \ref{lemma:structural} and \ref{thm:ergodic}, which use the renewal structure of our process in a crucial way, establish the following stronger version of \eqref{eqn:minorization_cond}. 
\begin{lemma} \label{lemma:minorization} Fix $\Lambda$ as in Lemma \ref{lemma:structural}. There exists a $t_0 > 0$ a non-trivial measure $\tilde{\mu}$ such that for all measurable $A \subset S$, $$P^t((h, \nu),\> A) \geq \tilde{\mu}(A) \quad \quad (h, \nu) \in \Lambda ,  \> t \geq t_0.$$
\end{lemma}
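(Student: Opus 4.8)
The plan is to derive the uniform-in-$\Lambda$ minorization $P^t((h,\nu),\cdot) \geq \tilde\mu(\cdot)$ from the resolvent minorization in Lemma~\ref{lemma:structural}(c) together with the total variation convergence in Theorem~\ref{thm:ergodic}, by a standard "average-then-wait" argument. First I would observe that Lemma~\ref{lemma:structural}(c) gives $R((h,\nu),A) \geq \alpha\,\mu(A)$ for all $(h,\nu)\in\Lambda$, where $R = \int_0^\infty e^{-t}P^t\,dt$ and $\mu$ is equivalent to $\pi$. Writing this out, $\int_0^\infty e^{-t} P^t((h,\nu),A)\,dt \geq \alpha\mu(A)$ for every $(h,\nu)\in\Lambda$. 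This is a minorization "in the time-averaged sense"; the goal is to upgrade it to a genuine minorization at a single deterministic time $t_0$, uniformly over the starting point in $\Lambda$.

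The key step is to use Theorem~\ref{thm:ergodic}: for each fixed $(h,\nu)$, $\|P^t((h,\nu),\cdot) - \pi\|_{TV} \to 0$. I would like this to be uniform over $(h,\nu)\in\Lambda$. To get uniformity, I would invoke the semigroup property and the petiteness of $\Lambda$: fixing a reference point (say $\renewalpt \in \Lambda$), for any $(h,\nu)\in\Lambda$ and any $s>0$ write $P^{s+t}((h,\nu),\cdot) = \int P^s((h,\nu),dz)\,P^t(z,\cdot)$, and combine with the fact that $P^s((h,\nu),\cdot)$ for $(h,\nu)\in\Lambda$ is, via the resolvent bound, "spread out" relative to a common component $\mu$. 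Concretely: by the resolvent bound there is a time-mixture $\int_0^\infty e^{-s}P^s((h,\nu),\cdot)\,ds$ dominating $\alpha\mu(\cdot)$; decompose $P^s((h,\nu),\cdot)$ accordingly so that, after an extra wait of length $t$, we are comparing $\int e^{-s}\bigl(\int P^t(z,\cdot)\,\mu(dz)\bigr)ds \geq \alpha \int P^t(z,\cdot)\,\mu(dz)$. Since $\mu \sim \pi$, the measure $\nu_t(\cdot) := \int P^t(z,\cdot)\,\mu(dz)$ converges to $\mu(S)\,\pi(\cdot)$ in total variation as $t\to\infty$ by dominated convergence using Theorem~\ref{thm:ergodic} pointwise in $z$ (the integrand is bounded and $\mu$ is finite). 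Hence for $t$ large enough, say $t\geq t_1$, we have $\nu_t(A) \geq \tfrac12\mu(S)\,\pi(A)$ for all $A$. Putting the pieces together one obtains, for a suitable deterministic $t_0$ and all $(h,\nu)\in\Lambda$, a lower bound of the form $P^{t_0}((h,\nu),A) \geq c\,\pi(A)$; then one extends from the single time $t_0$ to all $t \geq t_0$ by writing $P^t = P^{t-t_0}\circ P^{t_0}$ applied after first reaching $\Lambda$ — or more directly, by noting the argument above already produces the bound for every sufficiently large $t$, not just one value, so $\tilde\mu(\cdot) := c\,\pi(\cdot)$ works for all $t \geq t_0$.

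The main obstacle I anticipate is making the total variation convergence \emph{uniform} over $z$ in the support of $\mu$ (equivalently, over $\Lambda$, since $\mu$ is supported there up to equivalence), so that the dominated-convergence step producing $\nu_t \to \mu(S)\pi$ in total variation is rigorous. Theorem~\ref{thm:ergodic} only gives pointwise-in-$z$ convergence, but since $\mu$ is a finite measure and $\|P^t(z,\cdot)-\pi\|_{TV}\leq 2$ is a bounded integrand, $\int \|P^t(z,\cdot)-\pi\|_{TV}\,\mu(dz)\to 0$ by the dominated convergence theorem — this is the clean way around the difficulty and avoids needing genuine uniform convergence. The remaining work is bookkeeping with the resolvent decomposition to convert the $e^{-s}$-weighted time average into a bound at a deterministic time, which is routine. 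I would also double-check that $\renewalpt \in \Lambda$ (it is, since $-\tfrac{g}{1+\gamma} \in [-\tfrac{g+g/2(1+\gamma)}{1+\gamma}, g/\gamma]$ and $0\in[0,1]$) so that the petite-set machinery of Lemma~\ref{lemma:structural} applies with the stated $\Lambda$.
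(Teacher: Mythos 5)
Your proposal correctly identifies the ingredients the paper also uses (the resolvent minorization from Lemma~\ref{lemma:structural}(c) and the total-variation convergence from Theorem~\ref{thm:ergodic}), and the dominated-convergence step showing $\nu_t(\cdot) := \int P^t(z,\cdot)\,\mu(dz) \to \mu(S)\pi(\cdot)$ in total variation is correct and nicely handles the uniformity-in-$z$ concern you raised. However, the step you dismiss as ``routine bookkeeping'' — converting the $e^{-s}$-weighted time average into a bound at a single deterministic time — is where the genuine difficulty lies, and your argument does not close it.

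Concretely, Chapman--Kolmogorov and the resolvent bound give, for $(h,\nu)\in\Lambda$,
\begin{equation*}
\int_0^\infty e^{-s}\,P^{s+t}\left((h,\nu),A\right)\,ds \;=\; \int R\left((h,\nu),dz\right)\,P^t(z,A) \;\geq\; \alpha\,\nu_t(A) \;\geq\; \tfrac{\alpha}{2}\,\mu(S)\,\pi(A)
\end{equation*}
for $t$ large. This is a lower bound on a time-\emph{averaged} quantity, not on $P^{t_0}((h,\nu),A)$ at any fixed $t_0$; the left side can be achieved even if $s \mapsto P^{s+t}((h,\nu),A)$ oscillates between large and small values, so no single $t_0$ is pinned down. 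Passing from the resolvent average to a pointwise-in-time minorization is exactly where some form of continuous-time aperiodicity is needed — specifically, irreducibility of a skeleton chain $P^\Delta$. Without it, the implication is simply false in general: a periodic discrete-time chain has a minorized geometric resolvent but no minorized $P^n$. The paper circumvents this by invoking Theorem~6.1 of~\cite{stability} (ergodicity $\Rightarrow$ some skeleton chain is irreducible) together with Proposition~6.1 of~\cite{stability}, which packages precisely this average-to-pointwise upgrade for positive Harris recurrent processes with an irreducible skeleton and a petite set. Your proposal would, if completed, amount to re-proving that proposition from scratch; as written, the conclusion ``one obtains $P^{t_0}((h,\nu),A)\geq c\,\pi(A)$'' does not follow from the displayed inequality, and that is the missing idea.
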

\begin{proof}
Theorem \ref{thm:ergodic} shows that the process $(H_t,V_t)_{t\ge 0}$ is ergodic which, via Theorem 6.1 of \cite{stability}, implies that there is a skeleton chain that is irreducible. Moreover, from Lemma \ref{lemma:structural} (c), $\Lambda$ is `petite.' The Lemma now follows from Proposition 6.1 of \cite{stability} upon noting that our process in positive Harris recurrent, which was proved in Lemma \ref{lemma:structural} (d).
\end{proof}

\begin{proof}[Proof of Theorem \ref{thm:expo_ergodic}]
The theorem follows from Theorems 5.2, 6.2 of \cite{ergodicity}, which extend the discrete time drift and minorization conditions of Harris's theorem to continuous-time processes and show how these can be used to obtain exponential ergodicity (see also \cite{hairandmat} for a discrete-time formulation). Specifically, Lemma \ref{lemma:structural} (c) shows that $\Lambda$ is `petite'. Theorem 6.2 of \cite{ergodicity} (with $f \equiv 1$ and $\delta=1$) and Lemma \ref{lemma:lyapunov_exponential_moment} show the function $G(h, \nu) = 1 + \frac{1}{\eta}\left(\expec_{(h, \nu)} \, e^{\eta \tau_\Lambda(1)} - 1\right)$ is a Lyapunov function for the process which satisfies the following \emph{drift condition} for every $t_0 > 0$, 
\begin{equation}\label{eqn:drift_cond}
    P^tG \leq \lambda_{t_0} G + c\indi{\Lambda} \quad t \leq t_0, \> c > 0, \lambda_{t_0} < 1.
\end{equation}
The result then follows from Theorem 5.2 of \cite{ergodicity}. We note here that the referenced theorem requires an `aperiodicity' type condition for the semigroup $P^t$, given on p. 1675 of the reference (which is not the same as the more standard notion of aperiodicity defined in \eqref{eqn:aperiodicity_skeleton} below). However, the first line in the proof of Theorem 5.2 of \cite{ergodicity} makes clear that the sole use of the `aperiodicity' condition is to ensure there exists a $t_0 > 0$ such that the discrete-time process with transitions $\{P^{kt_0} \}_{k\ge 1}$ is geometrically ergodic. In other words,
\begin{equation}\label{eqn:skeleton_ergodic}
    \|P^{kt_0}((h, \nu), \> \cdot) - \pi \|_{TV} \le G(h, \nu) D_0 r^k, \quad k \ge 1, \> (h, \nu) \in S,
\end{equation}
for some $t_0 > 0$ and constants $D_0 \in (0, \infty)$ and $r \in (0, 1)$,  with $G$ as in \eqref{eqn:drift_cond}. 

For clarity, we verify here that \eqref{eqn:skeleton_ergodic} holds, from which the proof of Theorem 5.2 in \cite{ergodicity} can proceed as written. First, we recall the definition of aperiodicity for a discrete-time Markov chain (see Section 5.4.3 of \cite{meyntweedie}, Theorem 5.4.4 and the discussion preceding it): A process with transitions $\{Q^{k} \}_{k\ge 1}$ is called \emph{aperiodic} if there exists a set $C$ such that \eqref{eqn:minorization_cond} holds for $Q^k$ in place of $P^{t_0}$ for some $k \geq 1$ and $C$ in place of $\Lambda$, and such that, 
\begin{equation}\label{eqn:aperiodicity_skeleton}
    gcd \{n \geq 1 \> : \>  Q^n((h, \nu), \cdot\>) \geq \alpha_n \mu(\cdot), \quad (h, \nu) \in C \quad \text{some } \alpha_n > 0 \} = 1.
\end{equation}
Lemma \ref{lemma:minorization} shows \eqref{eqn:aperiodicity_skeleton} is satisfied for the chain with transition laws $Q^k = P^{kt_0}$, $C = \Lambda$ and $\alpha_n = 1$ for all $n$, where $\Lambda$ and $t_0$ are as given in the Lemma. Thus, the discretely sampled chain with transition laws $\{P^{kt_0} \}_{k\ge 1}$ is aperiodic. \eqref{eqn:skeleton_ergodic} follows immediately from this observation in conjunction with \eqref{eqn:drift_cond} and Theorem 16.0.1 (ii), (iv) of \cite{meyntweedie}. The proof of Theorem 5.2 in \cite{ergodicity} now proceeds exactly as written, giving the result.
\end{proof}
\begin{appendix}
\section{Appendix: Hitting time estimates}\label{hittimeest}
The following lemmas primarily support statements leading to the proof of Theorem \ref{thm:renewal}. We begin with the observation that $\tau_0^H \coloneqq \sigma(0)$ is finite a.s. for all initial conditions.
\begin{lemma} \label{lemma:mgf_hitting_boundary} For each $(h, \nu) \in S$, $$\prob_{(h, \nu)}\left(\sigma(0) < \infty \right) = 1$$
\end{lemma}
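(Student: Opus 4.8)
The plan is to exploit the domination of the gap $H$ by a Brownian motion with strictly negative drift, which is already recorded in \eqref{eqn:system_starts_interior}. Fix $(H_0,V_0)=(h,\nu)\in S$. If $h=0$ then $\sigma(0)=0$ and there is nothing to prove, so we may assume $h>0$, in which case $\sigma(0)>0$.

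The first step is to introduce the comparison process $M_t \coloneqq h + \nu/\gamma + g/\gamma^2 - B_t - t g/\gamma$. By \eqref{eqn:system_starts_interior}, for every $t<\sigma(0)$ we have $H_t\le M_t$ (this is exactly the last inequality displayed in \eqref{eqn:system_starts_interior}). The process $M$ is a Brownian motion with drift $-g/\gamma<0$ started from $M_0 = h + \nu/\gamma + g/\gamma^2>0$, where positivity of $M_0$ uses $\nu>-g/\gamma$. Since $\gamma,g>0$, the strong law of large numbers for Brownian motion gives $M_t\to-\infty$ almost surely as $t\to\infty$, so the stopping time $T\coloneqq\inf\{t\ge 0:\, M_t\le 0\}$ is finite $\prob_{(h,\nu)}$-almost surely (in fact it is the hitting time of a fixed level by a Brownian motion whose drift points toward that level, hence has finite expectation, though a.s.\ finiteness is all that is needed).

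The second step is a one-line comparison argument. Suppose, towards a contradiction, that $\sigma(0)>T$ on an event of positive probability. On that event $T<\sigma(0)$, so the inequality $H_T\le M_T$ is valid, and by definition of $T$ we have $M_T\le 0$; hence $H_T\le 0$. But $H_T\ge 0$ since $H$ is the reflected (non-negative) component, so $H_T=0$, which forces $\sigma(0)=\inf\{s\ge 0:\,H_s=0\}\le T$, contradicting $\sigma(0)>T$. Therefore $\sigma(0)\le T$ $\prob_{(h,\nu)}$-a.s., and combining with $T<\infty$ a.s.\ yields $\prob_{(h,\nu)}\!\left(\sigma(0)<\infty\right)=1$.

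There is no substantial obstacle here; the only point requiring care is that the comparison $H_t\le M_t$ supplied by \eqref{eqn:system_starts_interior} is a priori valid only on the random interval $[0,\sigma(0))$, so the contradiction must be set up so that the inequality is invoked at a time strictly before $\sigma(0)$ — which is why the argument conditions on the event $\{T<\sigma(0)\}$ and evaluates at $t=T$ there. Everything else is the standard fact that a Brownian motion with negative drift drifts to $-\infty$.
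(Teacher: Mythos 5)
Your proof is correct and follows essentially the same route as the paper's: dominate $H$ on $[0,\sigma(0))$ by the Brownian motion with drift $-g/\gamma$ from \eqref{eqn:system_starts_interior}, and conclude from the a.s.\ finiteness of its hitting time of $0$. Your write-up is simply more explicit about why the comparison, which is only asserted for $t<\sigma(0)$, still pins down $\sigma(0)\le T$.
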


\begin{proof} 
$\sigma(0) = 0$ for $(h, \nu) \in \partial S = \{0\} \times (-g/\gamma, \infty)$. For $(h, \nu) \in S^\circ$ and $t < \sigma(0)$, recall by \eqref{eqn:system_starts_interior},
$$
H_t \le h + \frac{\nu}{\gamma} + \frac{g}{\gamma^2} - B_t - \frac{g}{\gamma}t
$$
The lemma follows by noting that the bounding process in the above inequality is a Brownian motion with negative drift starting from a positive point and hence hits zero in finite time almost surely.
\end{proof}
\begin{lemma} \label{lemma:vel_escape_interval}
For any  $-g/\gamma < a < b$, $\ell > \frac{b - a}{\gamma a + g} > 0$ and $m\geq 1$,
$$\underset{\nu \in [a, b]}{\sup}\mathsmaller{\prob_{(0, \nu)}\left(\tau^V_{[a, b]^c} > m(\ell+1) \right) \leq \left[\prob\left(B_1 \leq b-a + (1+\gamma)b + g \right)\right]^m }.$$
\end{lemma}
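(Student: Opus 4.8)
The plan is to proceed by induction on $m$. Set $p:=\prob\big(B_1\le (b-a)+(1+\gamma)b+g\big)$, which is strictly less than $1$ because its argument is finite, and note $\gamma a+g>0$ since $a>-g/\gamma$. For $m=1$ it suffices (because $\ell+1\ge 1$, so $\{\tau^V_{[a,b]^c}>\ell+1\}\subseteq\{\tau^V_{[a,b]^c}>1\}$) to show $\sup_{\nu\in[a,b]}\prob_{(0,\nu)}(\tau^V_{[a,b]^c}>1)\le p$. The inductive step will restart the process at the first time after time $1$ at which the gap returns to $0$, exploiting that on the no-escape event the velocity is still in $[a,b]$ there, so that the strong Markov property and the inductive hypothesis apply after one more time-unit of ``loss''.

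\emph{Base case.} Fix $\nu\in[a,b]$, start from $(0,\nu)$, and work on $\mathcal{E}:=\{V_s\in[a,b]\text{ for all }s\le 1\}$. Integrating the velocity equation in \eqref{eqn:system} gives $V_1=\nu-\int_0^1(\gamma V_s+g)\,ds+L_1$, while the Skorohod representation \eqref{sm} (with $h=0$), taking $u=1$ in the supremum, gives $L_1\ge B_1-\int_0^1 V_w\,dw\ge B_1-b$ on $\mathcal{E}$. Since $\nu\ge a$ and $\int_0^1(\gamma V_s+g)\,ds\le \gamma b+g$ on $\mathcal{E}$, we get $V_1\ge a-(\gamma b+g)+B_1-b$; combined with $V_1\le b$ on $\mathcal{E}$ this forces $B_1\le (b-a)+(1+\gamma)b+g$. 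Hence $\mathcal{E}\subseteq\{B_1\le (b-a)+(1+\gamma)b+g\}$ and $\prob_{(0,\nu)}(\tau^V_{[a,b]^c}>1)\le\prob_{(0,\nu)}(\mathcal{E})\le p$, uniformly in $\nu$.

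\emph{Inductive step.} Assume the claimed bound with $m-1$ in place of $m$, uniformly over starting velocities in $[a,b]$. Fix $\nu\in[a,b]$, put $E_m:=\{V_s\in[a,b]\ \forall\,s\le m(\ell+1)\}$ and $T:=\inf\{t\ge 1:H_t=0\}$. The deterministic input is that, by \eqref{eqn:increase_on_boundary_only}, $L$ is constant on any interval on which $H>0$ and there $V$ decreases at rate at least $\gamma a+g$ as long as $V\in[a,b]$ (since then $\gamma V+g\ge\gamma a+g$). Hence, on $E_m$, if $T>\ell+1$ then $H>0$ throughout $[1,\ell+1]$, so $V_{\ell+1}\le V_1-(\gamma a+g)\ell\le b-(\gamma a+g)\ell<a$ (using $\ell>(b-a)/(\gamma a+g)$), contradicting $V_{\ell+1}\ge a$; therefore $E_m\subseteq\{T\le\ell+1\}$. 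On $E_m$ we also have $H_T=0$, $V_T\in[a,b]$, and $m(\ell+1)-T\ge (m-1)(\ell+1)$, so the restarted process stays in $[a,b]$ for at least $(m-1)(\ell+1)$ further units of time. The event $\{T\le\ell+1,\ V_s\in[a,b]\ \forall s\le T\}$ is $\F_T$-measurable; conditioning on $\F_T$ (the strong Markov property is valid by Theorem \ref{thm:existence}), applying the inductive hypothesis to the restart from $(0,V_T)$, and using $T\ge 1$ (so $\{V_s\in[a,b]\ \forall s\le T\}\subseteq\mathcal{E}$) together with the base case,
$$
\prob_{(0,\nu)}(E_m)\;\le\; p^{m-1}\,\prob_{(0,\nu)}\big(T\le\ell+1,\ V_s\in[a,b]\ \forall s\le T\big)\;\le\; p^{m-1}\,\prob_{(0,\nu)}(\mathcal{E})\;\le\; p^{m}.
$$
Taking the supremum over $\nu\in[a,b]$ completes the induction and hence the proof.

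\emph{Main obstacle.} The delicate point is the restart: one needs a genuine stopping time at which the process sits on $\{H=0\}$ with velocity still inside $[a,b]$ — the last visit to $\{H=0\}$ before $\ell+1$ is the natural candidate but is not a stopping time — and the choice $T=\inf\{t\ge 1:H_t=0\}$, together with the interior drift-rate lower bound $\gamma a+g$ and the hypothesis $\ell>(b-a)/(\gamma a+g)$, is precisely what guarantees both $T\le\ell+1$ on the no-escape event and that $(m-1)(\ell+1)$ units of time remain afterwards for the induction. Everything else — the Skorohod representation, interior monotonicity of $V$, and the strong Markov property — is already available from the preceding sections.
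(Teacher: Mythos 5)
Your proof is correct and takes essentially the same approach as the paper: your stopping time $T=\inf\{t\ge1:H_t=0\}$ is exactly the paper's $\sigma(1)$, your base-case bound $B_1\le (b-a)+(1+\gamma)b+g$ on the no-escape event matches the paper's pathwise inequality $B_t\le b-a+((1+\gamma)b+g)t$ evaluated at $t=1$, and the inductive restart at $\sigma(1)$ using the interior decrease rate $\gamma a+g$ and $\ell>(b-a)/(\gamma a+g)$ is the same device.
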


\begin{proof}
For $t \le \tau^V_{[a, b]^c}$ and $\nu \in [a, b]$, recalling that $L_t = \sup_{u\leq t}\left(B_u-S_u\right)$ and from \eqref{eqn:system}, we have $L_t \geq B_t - S_t \geq B_t - tb$ and $b \geq V_t \geq \nu - (\gamma b + g)t + L_t$. This gives 
$$B_t \le b - \nu + ((1+\gamma) b +g)t \le b - a + ((1+\gamma) b +g)t$$
for all $t \leq \tau^V_{[a, b]^c}$. Fix $\ell > \frac{b-a}{\gamma a + g} > 0$. Suppose $\tau^V_{[a, b]^c} \ge 1 + \ell$. Then if $\sigma(1) > 1+ \ell$, $L_t$ must be constant on $[1, 1+\ell]$, and using \eqref{eqn:system} and \eqref{eqn:increase_on_boundary_only} shows $$V_{1 + \ell} - V_1 \leq -\ell(\gamma a + g) < a - b \quad \quad \imply \quad V_{1+\ell} < a ,$$ a contradiction. Thus, $\tau^V_{[a, b]^c} \ge 1 + \ell$ implies $\sigma(1) \le 1+ \ell$. For any $m \ge 1$, applying the strong Markov property at $\sigma(1)$,
\begin{multline*}
\sup_{\nu \in [a, b]}\prob_{(0, \nu)}\left(\tau^V_{[a, b]^c} > m(\ell+1) \right)\\
\le \sup_{\nu \in [a, b]}\mathbb{E}_{(0, \nu)}\left(\mathbbm{1}_{\{B_1 \leq b-a + (1+\gamma)b + g , \ \sigma(1) \le 1+ \ell, \ V_{\sigma(1)} \in [a,b]\}}\prob_{(0, V_{\sigma(1)})}\left(\tau^V_{[a, b]^c} > m(\ell+1) - (\ell + 1)\right)\right)\\
\le \prob\left(B_1 \leq b-a + (1+\gamma)b + g \right) \sup_{\nu \in [a, b]}\prob_{(0, \nu)}\left(\tau^V_{[a, b]^c} > (m-1)(\ell+1)\right).
\end{multline*} 
An induction argument gives the result. 
\end{proof}
\begin{lemma}\label{lemma:gap_hits_zero_before_long} Fix $b = -\frac{g - g/2(1+\gamma)}{1+ \gamma}$ and $a =-\frac{g + g/2\gamma}{1+ \gamma} $. There exist positive constants $c$ and $t'(\gamma, g)$ such that
\begin{equation*}
    \underset{(h, \nu) \in [0, tg/4\gamma(1+\gamma)]\times(a, b)}{\sup}\>\prob_{(h, \nu)}\left(\sigma(\tau_a^V) - \tau_a^V > t ,\> \tau_a^V < \tau_b^V\right) \leq e^{-ct},
\end{equation*}
for all $t > t'(\gamma, g)$.
\end{lemma}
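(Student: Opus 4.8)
The plan is to bound $\sigma(\tau_a^V) - \tau_a^V$ on the event $\{\tau_a^V < \tau_b^V\}$ by splitting at the time $\tau_a^V$, applying the strong Markov property there, and then reducing to a statement about how long the gap process $H$ started from a point of the form $(h_0, a)$ with $h_0 \leq tg/4\gamma(1+\gamma)$ takes to hit $0$. On the event $\{\tau_a^V < \tau_b^V\}$, by \eqref{eqn:increase_on_boundary_only} the velocity is strictly decreasing until the gap hits zero, so up to time $\sigma(\tau_a^V)$ the local time $L$ does not increase and the system equations \eqref{eqn:system} reduce to the deterministic/Brownian pair in \eqref{eqn:system_starts_interior}: in particular $H_t \leq h_0 + \nu/\gamma + g/\gamma^2 - B_t - tg/\gamma$ for $t < \sigma(\tau_a^V)$ (measured from $\tau_a^V$), a Brownian motion with drift $-g/\gamma$ started from a point that is $O(h_0)$. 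Thus $\sigma(\tau_a^V) - \tau_a^V$ is stochastically dominated by the first passage time of such a Brownian motion with drift to level $0$.

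The key estimate is therefore: if $W$ is a Brownian motion with drift $-g/\gamma$ started at a point $p$, then $\prob(\tau_0^W > t)$ decays like a Gaussian for $t$ large compared to $p/(g/\gamma)$. Here $p \leq h_0 + \nu/\gamma + g/\gamma^2 \leq tg/4\gamma(1+\gamma) + C(\gamma,g)$, so for $t$ large, $p \leq tg/3\gamma$ say, and after time $t$ the mean displacement $-tg/\gamma$ overshoots $-p$ by a margin of order $t$; a standard Gaussian tail bound (reflection principle / explicit first-passage density, as in Ch. 2.8 and Ch. 3.5 of \cite{kar}) then gives $\prob(\tau_0^W > t) \leq \prob(W_t > 0) = \prob(B_t > p + tg/\gamma)$, which is at most $\exp\{-c t\}$ for a constant $c = c(\gamma, g) > 0$ once $t > t'(\gamma, g)$. (One can keep it crude: $p + tg/\gamma \geq tg/\gamma - tg/3\gamma = 2tg/3\gamma$, so $\prob(B_t > 2tg/3\gamma) \leq \exp\{-2t(g/\gamma)^2/9\}$, giving even exponential-in-$t$ decay, more than enough.) Taking the supremum over $h_0 \in [0, tg/4\gamma(1+\gamma)]$ and $\nu \in (a,b)$ is harmless since the bound is monotone in $h_0$ and $\nu$ and the range of $\nu$ is a fixed bounded interval.

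To assemble: I would write
$$\prob_{(h,\nu)}\left(\sigma(\tau_a^V) - \tau_a^V > t,\ \tau_a^V < \tau_b^V\right) \le \prob_{(h,\nu)}\left(\sigma(\tau_a^V) - \tau_a^V > t,\ \tau_a^V < \tau_b^V,\ H_{\tau_a^V} \le tg/4\gamma(1+\gamma)\right),$$
apply the strong Markov property at $\tau_a^V$ — noting $V_{\tau_a^V} = a$ and $H_{\tau_a^V} \le tg/4\gamma(1+\gamma)$ on the relevant event — to reduce to $\sup_{h_0 \le tg/4\gamma(1+\gamma)} \prob_{(h_0, a)}(\sigma(0) > t)$, then invoke the domination by Brownian motion with drift $-g/\gamma$ from \eqref{eqn:system_starts_interior} and the Gaussian tail bound above. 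The main (very mild) obstacle is bookkeeping the linear-in-$t$ starting height: one must make sure the drift term $tg/\gamma$ beats the starting height $tg/4\gamma(1+\gamma)$, which it does since $g/\gamma > g/4\gamma(1+\gamma)$, leaving a residual drift of order $t$; choosing $t'(\gamma,g)$ large enough to absorb the additive constant $\nu/\gamma + g/\gamma^2$ then finishes it. Everything else is a routine application of the reflection principle and the strong Markov property.
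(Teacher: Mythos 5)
The central gap is in the inequality you write after ``To assemble'': you assert $\prob_{(h,\nu)}(A) \le \prob_{(h,\nu)}(A \cap \{H_{\tau_a^V} \le tg/4\gamma(1+\gamma)\})$, which is backwards (the right-hand event is a subset, so its probability is smaller), and even if read as an asserted equality it would be false. The hypothesis of the lemma bounds the \emph{initial} gap $h$ by $tg/4\gamma(1+\gamma)$; it does \emph{not} bound the gap $H_{\tau_a^V}$ at the later random time $\tau_a^V$. Between time $0$ and $\tau_a^V$ the gap is dominated by a Brownian motion with downward drift, but such a process can still rise considerably, so $H_{\tau_a^V}$ has unbounded tails and the bound on $h$ does not transfer to it. The paper addresses this with two additional estimates you omit: in \eqref{eqn:kzero_4}--\eqref{eqn:kzero_5} it shows that on $\{\tau_a^V < \tau_b^V, \ \tau_a^V \le t\}$, having $H_{\tau_a^V} > tg/2\gamma + b - a$ forces $\sup_{q\le t}(-B_q) > tg/4\gamma(1+\gamma)$, which has a Gaussian tail; and to make the condition $\tau_a^V \le t$ available for that argument, \eqref{eqn:kzero_1}--\eqref{eqn:kzero_3} first show $\prob_{(h,\nu)}(t < \tau_a^V \wedge \tau_b^V)$ decays exponentially, via Lemma~\ref{lemma:vel_escape_interval} and the strong Markov property at $\sigma(0)$.

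Your core observation — that once you know $(H_{\tau_a^V}, V_{\tau_a^V}) = (h_0, a)$ with $h_0$ growing at most linearly in $t$ with a coefficient strictly less than $g/\gamma$, the post-$\tau_a^V$ gap is dominated by Brownian motion with drift $-g/\gamma$ and hence $\sigma(\tau_a^V)-\tau_a^V$ has an exponential tail — is correct and is exactly \eqref{eqn:kzero_6} in the paper. But that covers only one of the three cases that must be combined via the union bound in \eqref{eqn:kzero_7}; the separate estimates controlling $\tau_a^V \wedge \tau_b^V$ and controlling $H_{\tau_a^V}$ are what actually make the lemma nontrivial, and they are missing from your proposal.
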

\begin{proof}
Lemma \ref{lemma:vel_escape_interval} shows there exists a $\tilde{p} \in (0, 1)$ and $C > 0$ depending on $\gamma, g$ such that for $t\geq C$,
\begin{equation}\label{eqn:kzero_1}
    \underset{\nu \in (a, b)}{\sup}\>\prob_{(0, \nu)}\left(t < \tau_a^V \wedge \tau_b^V\right) \leq \tilde{p}^{t/C}.
\end{equation}
Suppose $h > 0$. \eqref{eqn:increase_on_boundary_only} shows $V$ cannot reach the level $b > \nu$ without the gap process $H$ hitting zero, which implies $\sigma(0) < \tau_b^V$. In addition, \eqref{eqn:hitting_time_velocity} implies $\tau_a^V \geq \frac{1}{\gamma}\log\left(\frac{\nu + g/\gamma}{a + g/\gamma} \right)$ with equality when $\sigma(0) \ge \tau_a^V$. Therefore if $\sigma(0) \geq \tau_a^V$,  $\tau_a^V\wedge \tau_b^V \leq \frac{1}{\gamma}\log\left(\frac{\nu + g/\gamma}{a + g/\gamma} \right) \leq \frac{1}{\gamma}\log\left(\frac{b + g/\gamma}{a + g/\gamma} \right)$. Hence, for $t > \frac{1}{\gamma}\log\left(\frac{b + g/\gamma}{a + g/\gamma} \right)$, we obtain
\begin{multline}\label{eqn:kzero_2}
    \prob_{(h, \nu)}\left(t < \tau_a^V\wedge \tau_b^V \right) = \prob_{(h, \nu)}\left(\sigma(0) < \tau_a^V, \> t < \tau_a^V\wedge \tau_b^V \right) \\
    = \prob_{(h, \nu)}\left(\frac{1}{\gamma}\log\left(\frac{\nu + g/\gamma}{a + g/\gamma} \right) < \sigma(0) < \tau_a^V, \> t < \tau_a^V\wedge \tau_b^V \right) + \prob_{(h, \nu)}\left(\sigma(0) \leq \tau_a^V \wedge \frac{1}{\gamma}\log\left(\frac{\nu + g/\gamma}{a + g/\gamma} \right), \> t < \tau_a^V\wedge \tau_b^V \right)\\
    = \prob_{(h, \nu)}\left(\sigma(0) \leq \frac{1}{\gamma}\log\left(\frac{\nu + g/\gamma}{a + g/\gamma} \right), \> t < \tau_a^V\wedge \tau_b^V \right).
\end{multline}
\eqref{eqn:increase_on_boundary_only} shows that when $\sigma(0) < \tau_a^V$, $V_{\sigma(0)} \in (a, b)$. As a result, \eqref{eqn:kzero_1}, \eqref{eqn:kzero_2} and the strong Markov property at $\sigma(0)$ show there exists positive constants $c, C$ and $t_0(\gamma, g)$ such that that for any $(h, \nu) \in (0, \infty) \times (a, b)$ and $t > t_0(\gamma, g)$,
\begin{multline}\label{eqn:kzero_3}
    \prob_{(h, \nu)}\left(t < \tau_a^V\wedge \tau_b^V \right) = \expec_{(h, \nu)}\left(\prob_{(0, V_{\sigma(0)})} \left(t - \sigma(0) < \tau_a^V\wedge \tau_b^V \right)\mathbbm{1}_{\sigma(0) \leq \frac{1}{\gamma}\log\left(\frac{\nu + g/\gamma}{a + g/\gamma} \right)}\right)\\
    \leq \underset{\nu \in (a, b)}{\sup}\>\prob_{(0, \nu)}\left(t - \frac{1}{\gamma}\log\left(\frac{b + g/\gamma}{a + g/\gamma}\right) < \tau_a^V \wedge \tau_b^V\right) \leq e^C \> e^{-ct} \leq e^{-ct/2}.
\end{multline}
Starting from $(H_0,V_0)=(0,\nu)$ for some $\nu \in (a,b)$, $\sigma(\tau_a^V) - \tau_a^V$ is large if one of the following two events happen: (i) either the gap is large at $\tau_a^V$ or (ii) the gap at $\tau_a^V$ is not large but the gap remains positive for a large time after $\tau_a^V$. We handle these cases separately and show that in either case, the Brownian particle has to attain a large negative value before $\tau_b^V$, leading to exponentially small probability bounds.

To estimate the probability of the event (i), fix $t>t_0(\gamma, g)$ and set $\left(H_0, V_0 \right) = (h, \nu) \in [0, tg/4\gamma(1+\gamma)]\times(a, b)$. System equations \eqref{eqn:system} show $H_u - V_u = h-\nu + (1+\gamma)S_u + ug - B_u$. For $u < \tau_a^V \wedge \tau_b^V \wedge t$,
\begin{multline}\label{eqn:kzero_4}
    H_u - tg/2\gamma = H_u - V_u + V_u - tg/2\gamma = h - \nu + (1+\gamma)S_u + ug - B_u + V_u - tg/2\gamma  \\ 
    \leq tg/4\gamma(1+\gamma) + b - a + ug/2(1+\gamma) - tg/2\gamma + \sup_{u \leq t}\left(- B_u\right)\\
    \leq tg/4\gamma(1+\gamma) + b - a + t\left(g/2(1+\gamma) - g/2\gamma\right) + \sup_{u \leq t}\left(- B_u\right)
    = b-a - t g/4\gamma(1+\gamma) + \sup_{u \leq t}\left(- B_u\right),
\end{multline}
where the first inequality follows from $S_u \leq ub = -u\left(\frac{g-g/2(1+\gamma)}{1+\gamma}\right)$. If $\tau_a^V < \tau_b^V$ and $\tau_a^V \wedge \tau_b^V \leq t $, \eqref{eqn:kzero_4} shows that if $H_{\tau_a^V} > tg/2\gamma + b - a$ then $\sup_{q \leq t}\left(- B_q\right) > t g/4\gamma(1+\gamma)$. Using \eqref{eqn:kzero_4}, a standard upper bound on the Gaussian distribution and the reflection principle, we obtain
\begin{multline}\label{eqn:kzero_5}
    \underset{(h, \nu) \in [0, tg/4\gamma(1+\gamma)]\times(a, b)}{\sup}\>\prob_{(h, \nu)}\left(\tau_a^V < \tau_b^V, \> \tau_a^V \wedge \tau_b^V \leq t, \>  H_{\tau_a^V} > tg/2\gamma + b-a \right)\\
    \leq \prob\left(\underset{u \leq t}{\sup}\left(- B_u\right) > t g/4\gamma(1+\gamma)\right)
    \leq \frac{8(1+\gamma)\gamma}{g\sqrt{2\pi t}} e^{-tg^2/32\gamma^2(1+\gamma)^2}.
\end{multline}
To estimate the probability of event (ii), fix $t > t_0(\gamma, g)$ and set $(H_0, V_0) = (h , a)$ with any $h \in \left(0, tg/2\gamma + b-a \right]$. When $t < \sigma(0)$, \eqref{eqn:system_starts_interior} implies $0 < H_t \leq tg/2\gamma + b-a + a/\gamma + g/\gamma^2 - B_t - tg/\gamma = c' - B_t - tg/2\gamma$ for a positive constant $c'$. We choose $t_0(\gamma, g)$ large enough that $t_0(\gamma, g)g/2\gamma - c' > 0$. Therefore,
\begin{equation}\label{eqn:kzero_6}
    \underset{h \in (0, tg/2\gamma + b-a]}{\sup} \> \prob_{(h, a)}\left(t < \sigma(0) \right) \leq \prob\left(-B_t >tg/2\gamma - c'  \right) \leq \frac{\sqrt{t}}{\sqrt{2\pi}\left(tg/2\gamma - c' \right)} e^{-(tg/2\gamma - c')^2/2t}.
\end{equation}
In \eqref{eqn:kzero_3}, \eqref{eqn:kzero_5} and \eqref{eqn:kzero_6} the choice of $t > t_0(\gamma, g)$ was arbitrary. As a result, \eqref{eqn:kzero_3} and \eqref{eqn:kzero_5} show that for a positive constant $c$,
\begin{multline}\label{eqn:kzero_7}
    \prob_{(h, \nu)}\left(\sigma(\tau_a^V) - \tau_a^V > t, \> \tau_a^V < \tau_b^V\right)\\
    \leq \prob_{(h, \nu)}\left(t < \tau_a^V \wedge \tau_b^V\right) + \prob_{(h, \nu)}\left(\tau_a^V < \tau_b^V, \> \tau_a^V \wedge \tau_b^V \leq t, \>  H_{\tau_a^V} > tg/2\gamma + b-a \right) \\
    + \prob_{(h, \nu)}\left(\sigma(\tau_a^V) - \tau_a^V > t, \>  H_{\tau_a^V} \leq tg/2\gamma + b-a , \>\tau_a^V < \tau_b^V, \> \tau_a^V \wedge \tau_b^V \leq t\right) \\
    \leq e^{-ct} + \prob_{(h, \nu)}\left(\sigma(\tau_a^V) - \tau_a^V > t, \>  H_{\tau_a^V} \leq tg/2\gamma + b-a \right),
\end{multline}
holds for all $(h, \nu) \in [0, tg/4\gamma(1+\gamma)]\times(a, b)$ and $t$ sufficiently large. The strong Markov property at $\tau_a^V$ and \eqref{eqn:kzero_6} show
\begin{equation}\label{eqn:kzero_8}
    \underset{(h, \nu) \in [0, tg/4\gamma(1+\gamma)]\times(a, b)}{\sup}\>\prob_{(h, \nu)}\left(\sigma(\tau_a^V) - \tau_a^V > t, \>  H_{\tau_a^V} \leq tg/2\gamma + b-a \right) \leq e^{-ct}.
\end{equation}
\eqref{eqn:kzero_7} and \eqref{eqn:kzero_8} prove the lemma.
\end{proof}

\begin{lemma} \label{lemma:vel_drop_above_tailbound} For any $u \in \left(-\frac{g}{1+\gamma}, 0\right)$, there exists a constant $t_0(u, \gamma, g) >0$ such that $$\underset{\nu \in \left(u, 0\right]}{\sup}\>\prob_{(0, \nu)}\left(\tau_u^V > t \right) \leq  \exp\left\{-\frac{\left( t\left((1+\gamma) u + g\right) + u - \nu \right)^2}{2t} \right\},$$
for $t > t_0(u, \gamma, g).$
\end{lemma}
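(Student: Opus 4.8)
The plan is to dominate the velocity from above, on the event $\{\tau_u^V > t\}$, by a Brownian motion with an explicit negative drift, and then read off the bound from a reflection‑principle estimate. Throughout, fix $\nu \in (u,0]$ and run the system from $(H_0,V_0)=(0,\nu)$, recalling from the hypothesis that $u \in (-g/(1+\gamma),0)$, so in particular $\gamma u + g > 0$ and $(1+\gamma)u + g > 0$.

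First I would record the elementary fact that on $\{\tau_u^V > t\}$ one has $V_s > u$ for every $s \in [0,t]$: this holds by continuity of $V$, since $V_0 = \nu > u$ and $V$ has not reached level $u$ on $[0,t]$. This has two consequences on that event. On one hand, $\int_0^t(\gamma V_s + g)\,ds \ge t(\gamma u + g)$. On the other, $S_s = \int_0^s V_w\,dw \ge us$ for all $s \le t$, so the Skorohod representation \eqref{sm} (with $h = 0$) gives $L_t = \sup_{s\le t}(B_s - S_s)\vee 0 \le \sup_{s\le t}(B_s - us)$. Substituting both into the velocity equation $V_t = \nu - \int_0^t(\gamma V_s+g)\,ds + L_t$ from \eqref{eqn:system}, we get on $\{\tau_u^V > t\}$
$$V_t \le \nu - t(\gamma u + g) + \sup_{s\le t}(B_s - us),$$
and since $V_t > u$ there as well, this forces $\sup_{s\le t}(B_s - us) \ge t(\gamma u + g) + u - \nu$.

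Next I would eliminate the drift from the Brownian supremum: because $-us = (-u)s \le (-u)t$ for $s \le t$, we have $\sup_{s\le t}(B_s - us) \le (-u)t + \sup_{s\le t}B_s$, so the previous inequality yields
$$\sup_{s\le t}B_s \ge t\big((1+\gamma)u + g\big) + u - \nu.$$
Taking $t_0(u,\gamma,g) := (-u)/\big((1+\gamma)u+g\big) > 0$, one checks directly that the right‑hand side above is strictly positive for every $t > t_0$ and every $\nu \in (u,0]$ (the tightest case being $\nu = 0$). Consequently, for $t > t_0$,
$$\prob_{(0,\nu)}\big(\tau_u^V > t\big) \le \prob\Big(\sup_{s\le t}B_s \ge t\big((1+\gamma)u + g\big) + u - \nu\Big),$$
and the proof finishes by the reflection principle $\prob(\sup_{s\le t}B_s \ge c) = 2\prob(B_t \ge c)$ together with the Gaussian tail bound $2\prob(B_t \ge c) \le \exp\{-c^2/(2t)\}$ for $c > 0$, applied with $c = t((1+\gamma)u+g)+u-\nu$; uniformity over $\nu \in (u,0]$ is then automatic. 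The only step demanding any care is the first one: the bookkeeping must be arranged so that the drift $(-u)t$ picked up when bounding the local time combines with the dissipation term $-t(\gamma u + g)$ to produce exactly the coefficient $(1+\gamma)u+g$ appearing in the statement; everything else is routine.
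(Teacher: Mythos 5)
Your proof is correct and follows essentially the same route as the paper's: bound $L_t$ via the Skorohod representation by $\sup_{s\le t}(B_s-us)\le \sup_{s\le t}B_s - ut$, bound the dissipation integral by $t(\gamma u+g)$ using $V_s>u$ on $[0,t]$, conclude $\sup_{s\le t}B_s\ge t((1+\gamma)u+g)+u-\nu$ on $\{\tau_u^V>t\}$, and finish by a Gaussian tail estimate. The only (cosmetic) difference is the final inequality: you invoke the clean bound $\mathbb{P}(\sup_{s\le t}B_s\ge c)=2\mathbb{P}(B_t\ge c)\le e^{-c^2/(2t)}$, valid for all $c\ge0$ (equivalently $2(1-\Phi(x))\le e^{-x^2/2}$ for $x\ge0$), so your $t_0$ need only guarantee positivity of $c$; the paper instead uses the Mills-ratio bound $2(1-\Phi(x))\le \tfrac{2}{\sqrt{2\pi}\,x}e^{-x^2/2}$ and therefore imposes the slightly stronger requirement $0<\sqrt{t}/\bigl(t((1+\gamma)u+g)+u\bigr)<1$. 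Both choices are fine since the lemma only asserts existence of some $t_0(u,\gamma,g)$.
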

\begin{proof}
For $t < \tau^V_u$,  $L_t \leq \underset{s \leq t}{\sup}(B_s - su) \leq \underset{s \leq t}{\sup}\,B_s - tu $ and

$$\underset{s \leq t}{\sup}\,B_s - t(1 + \gamma) u - tg + \nu \geq L_t - \int_0^t (\gamma V_s + g)\, ds + \nu = V_t > u .$$

The result follows by tail bounds on the supremum of Brownian motion, taking $t$ large enough that $0 < \frac{\sqrt{t}}{t\left((1+\gamma) u + g\right) + u} <  1$.
\end{proof}
The next lemma shows that the velocity process starting from a value $\nu \in (-\frac{g}{\gamma}, -\frac{g}{1+\gamma})$ cannot take too long to reach a larger value in the same interval because it is bounded below by a Brownian motion with positive drift.
\begin{lemma} \label{lemma:vel_increase_below_renewal_tailprob}
Fix $\epsilon_0 \in (0, g/ \gamma)$ and $u = - \frac{g + \epsilon_0}{1+\gamma}$. For each $h \geq 0$, there exists a constant $c >0$ such that 
$$\underset{\nu \in (-\frac{g}{\gamma}, u)}{\sup} \, \prob_{(h, \nu)} \left(\tau_u^V > t \right) \leq e^c \, e^{\epsilon_0h}\, e^{-\frac{\epsilon_0^2 }{2} t }, $$
for all $t > t_0 \coloneqq 2h/\epsilon_0 + 2g/\epsilon_0 \gamma (1+\gamma) + 4/\epsilon_0^2$. In particular, there exists a constant $c' > 0$ depending on $g, \gamma, \epsilon_0$ such that for any $\eta \in (0,  \epsilon_0^2/4)$,
$$\underset{\nu \in (-\frac{g}{\gamma}, u)}{\sup} \, \expec_{(h, \nu)} \> e^{\eta \tau_u^V} \leq e^{c'} e^{ 2 \eta h / \epsilon_0}.$$
\end{lemma}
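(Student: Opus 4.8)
The plan is to show that on the event $\{t<\tau_u^V\}$ the velocity process dominates a Brownian motion with constant positive drift $\epsilon_0$, deduce the tail estimate from a Gaussian computation, and then integrate it to obtain the exponential-moment bound. The algebraic identity that makes this work is $(1+\gamma)u+g=-\epsilon_0$ for $u=-(g+\epsilon_0)/(1+\gamma)$.

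First I would fix $\nu\in(-g/\gamma,u)$ and start from $(H_0,V_0)=(h,\nu)$. On $\{t<\tau_u^V\}$ we have $V_w<u<0$ for all $w\le t$, so $S_t=\int_0^t V_w\,dw<ut$. The Skorohod representation \eqref{sm} gives $L_t\ge -h+B_t-S_t$, and plugging this into $V_t=\nu-\gamma S_t-gt+L_t$ yields
\[
V_t\ \ge\ \nu-h+B_t-(1+\gamma)S_t-gt\ >\ \nu-h+B_t-\big((1+\gamma)u+g\big)t\ =\ \nu-h+B_t+\epsilon_0 t.
\]
Hence $\{\tau_u^V>t\}\subseteq\{B_t< (u-\nu)+h-\epsilon_0 t\}$, and since $u-\nu<u+g/\gamma=(g-\gamma\epsilon_0)/\gamma(1+\gamma)<g/\gamma(1+\gamma)$, this bound is uniform in $\nu$: $\prob_{(h,\nu)}(\tau_u^V>t)\le \prob\big(B_t<h+g/\gamma(1+\gamma)-\epsilon_0 t\big)$.

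Next, writing $M=h+g/\gamma(1+\gamma)$ and $x=(\epsilon_0 t-M)/\sqrt t$, the right-hand side equals $\prob(B_1>x)$. For $t>t_0$ the inequalities $t\ge 2M/\epsilon_0$ and $t\ge 4/\epsilon_0^2$ hidden in the definition of $t_0$ force $x\ge\epsilon_0\sqrt t/2\ge 1$, so $\prob(B_1>x)\le (x\sqrt{2\pi})^{-1}e^{-x^2/2}\le e^{-x^2/2}$; expanding $x^2/2=\epsilon_0^2 t/2-\epsilon_0 M+M^2/2t\ge\epsilon_0^2 t/2-\epsilon_0 M$ proves the first assertion with $c=\epsilon_0 g/\gamma(1+\gamma)$. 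For the exponential moment I would use $\expec_{(h,\nu)}e^{\eta\tau_u^V}=1+\int_0^\infty\eta e^{\eta t}\prob_{(h,\nu)}(\tau_u^V>t)\,dt$ and split the integral at $t_0$. On $[0,t_0]$ bound the probability by $1$, giving at most $e^{\eta t_0}$, which for $\eta<\epsilon_0^2/4$ and $t_0=2h/\epsilon_0+2g/\epsilon_0\gamma(1+\gamma)+4/\epsilon_0^2$ is a constant (depending only on $g,\gamma,\epsilon_0$) times $e^{2\eta h/\epsilon_0}$. On $(t_0,\infty)$ insert the tail bound: the integral is at most $e^c e^{\epsilon_0 h}\,\tfrac{\eta}{\epsilon_0^2/2-\eta}\,e^{(\eta-\epsilon_0^2/2)t_0}\le e^c e^{\epsilon_0 h}e^{(\eta-\epsilon_0^2/2)t_0}$, and since $(\eta-\epsilon_0^2/2)(2h/\epsilon_0)=2\eta h/\epsilon_0-\epsilon_0 h$ the factor $e^{\epsilon_0 h}$ cancels exactly while the residual exponent $(\eta-\epsilon_0^2/2)(2g/\epsilon_0\gamma(1+\gamma)+4/\epsilon_0^2)$ is negative; summing the three pieces gives $\expec_{(h,\nu)}e^{\eta\tau_u^V}\le e^{c'}e^{2\eta h/\epsilon_0}$ uniformly in $\nu$.

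The one step needing care is the last one: the tail bound carries a prefactor $e^{\epsilon_0 h}$ which, when $\eta$ is small, is far larger than the target $e^{2\eta h/\epsilon_0}$, so one has to exploit that the Gaussian decay $e^{-\epsilon_0^2 t/2}$ already beats $e^{\epsilon_0 h}$ once $t\gtrsim 2h/\epsilon_0$ — and this is precisely why $t_0$ is chosen to contain the term $2h/\epsilon_0$. Everything else is routine comparison with drifted Brownian motion together with standard Gaussian tail estimates.
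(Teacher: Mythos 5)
Your proof is correct and rests on exactly the same domination as the paper's: combining the Skorohod formula for $L$ with $S_t< ut$ (valid on $\{t<\tau_u^V\}$) to get $V_t\ge \nu-h+B_t+\epsilon_0 t$, using $(1+\gamma)u+g=-\epsilon_0$. The one genuine difference is how the Gaussian estimate is then extracted. The paper reads the domination as saying $\tau_u^V$ is stochastically larger than the first passage time of the drifted Brownian motion $B_s+\epsilon_0 s$ to level $\theta=h+g/\gamma+u$, and integrates the inverse-Gaussian hitting-time density from $t$ to $\infty$ after a change of variables. You instead use the cheaper one-time inclusion $\{\tau_u^V>t\}\subseteq\{V_t<u\}\subseteq\{B_t<M-\epsilon_0 t\}$ with $M=h+g/\gamma(1+\gamma)$ and apply the Gaussian tail of $B_t$ directly; this avoids the hitting-time density entirely and yields the same constants (the paper itself bounds $\theta$ by the slightly larger $M$ at the end). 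For the exponential-moment claim, where the paper only cites $\expec e^{\eta\tau_u^V}\le\int_0^\infty\sup_\nu\prob_{(h,\nu)}(e^{\eta\tau_u^V}>t)\,dt$ and leaves the computation implicit, your explicit split at $t_0$ and the identity $(\eta-\epsilon_0^2/2)(2h/\epsilon_0)=2\eta h/\epsilon_0-\epsilon_0 h$ correctly display why the $e^{\epsilon_0 h}$ prefactor is absorbed — which is indeed the point of the $2h/\epsilon_0$ term hidden in the lemma's definition of $t_0$.
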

\begin{proof}
For $(H_0, V_0) = (h,\nu)$ with $h \ge 0, \nu \in \left(-\frac{g}{\gamma}, u\right)$, and any $t < \tau_u^V$, 
$$
V_t \geq -t(\gamma u + g) + \underset{s\leq t}{\sup}(- h + B_s - su) + \nu \ge B_t -t((1+\gamma) u + g)  + \nu - h \ge B_t -t((1+\gamma) u + g)  -\frac{g}{\gamma} - h.
$$
This lower bound on $V$ implies
$$
\left\{\tau_u^V > t, \,\, (H_0, V_0) = (h,\nu)\right\}  \subset \left\{\inf \left\{s \geq 0 \, : \,\, B_s -s((1+\gamma)u + g) = u + \frac{g}{\gamma} + h\right\} > t\right\}.
$$
From tail bounds on hitting times of Brownian motion with drift $-((1+\gamma)u + g) = \epsilon_0$, we conclude that
\begin{multline*}
    \prob_{(h, \nu)} \left(\tau_u^V > t \right) \leq \int_t^\infty \frac{h + u + \frac{g}{\gamma}}{\sqrt{2\pi \, s^3}} \, \exp \left\{-\frac{\left(h + u + \frac{g}{\gamma} - \epsilon_0 s \right)^2}{2s} \right\} \, ds\\
  \leq \left(\frac{\epsilon_0}{2\theta} t +  \frac{1}{2} \right)^{-1}\, \frac{1}{\sqrt{2\pi}} \, \int_{z(t)}^\infty  \exp \left\{-\frac{z^2}{2} \right\} \, dz
    \leq \left(\frac{\epsilon_0}{\theta} t +  1 \right)^{-1}\, \frac{1}{z(t)} \, \exp \left\{-\frac{z(t)^2}{2} \right\} \leq e^{\epsilon_0 \theta} \, \exp \left\{-\frac{\epsilon_0^2 }{2} t \right\},
\end{multline*}
holds for all $\nu\in ( -g / \gamma, u)$ and for $t \geq 2\left(\frac{\theta}{\epsilon_0} \vee \frac{2}{\epsilon_0^2}\right)$, where $\theta = h + g / \gamma +u > 0$. The second inequality follows after a change of variables to $z(s) = \epsilon_0 \sqrt{s} - \frac{\theta}{\sqrt{s}}$. $t_0$ is chosen to satisfy $t_0 \ge 2\left(\frac{\theta}{\epsilon_0} \vee \frac{2}{\epsilon_0^2}\right)$ and $z(t) \ge \epsilon_0 \sqrt{t} - \frac{h + g/\gamma - g/(1+\gamma)}{\sqrt{t}} = \epsilon_0 \sqrt{t} - \frac{h + g/\gamma(1+\gamma)}{\sqrt{t}}  \ge 1$ for $t \geq t_0$. 

The second statement follows from $
\underset{\nu \in (-\frac{g}{\gamma}, u)}{\sup}\>\expec_{(h, \nu)}\, e^{\eta \, \tau_u^V} \leq \int_0^\infty \underset{\nu \in (-\frac{g}{\gamma}, u)}{\sup}\>\prob_{(h, \nu)} \left( e^{\eta \, \tau_u^V} > t \right) \, dt.
$
\end{proof}
\begin{remark}\label{tauplus}
Writing $\tau_u^{V+} = \inf\{t>0: V_t = u\}$, the proof of Lemma \ref{lemma:vel_increase_below_renewal_tailprob} shows that the same bounds obtained above hold for   $\prob_{(h, u)} \left(\tau_u^{V+} > t \right)$ and $\expec_{(h, u)} \> e^{\eta \tau_u^{V+}}$ when $h>0$.
\end{remark}
\begin{lemma} \label{lemma:velocity_tail_bounded_by_exponential}
Fix $u > 0$. For each $\nu > u$, 
$$\underset{h \geq 0}{\sup}\,\,\prob_{(h, \nu)}\left(\tau_u^V > t \right) \leq 1 \wedge \exp\left\{-2u\left(u - \nu + t(\gamma u + g)\right)\right\}, $$ 
for all $t \geq 0$. Therefore, if $\eta \in \left(0, u(\gamma u + g) \right)$
$$
\underset{h \geq 0}{\sup}\,\,\expec_{(h, \nu)}\, e^{\eta \, \tau_u^V} \leq e^{\frac{\eta}{\gamma u + g}\left(\nu - u \right)} + e^{2u\left(\nu - u \right)- \frac{\eta}{\gamma u + g}\left(\nu - u \right)} \leq 2 e^{2u\left(\nu - u \right)}.
$$
\end{lemma}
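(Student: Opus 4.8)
The plan is to dominate the velocity, on the event that it has not yet dropped to $u$, by a Brownian motion with strongly negative drift, and then read off the tail from the explicit law of the all‑time maximum of a drifted Brownian motion. Fix $(H_0,V_0)=(h,\nu)$ with $\nu>u>0$. On $\{\tau_u^V>t\}$ the process $V$ is continuous, starts at $\nu>u$ and never equals $u$ on $[0,t]$, so $V_s>u$ for every $s\le t$. This gives two things at once: first, $\int_0^t(\gamma V_s+g)\,ds\ge t(\gamma u+g)$; second, using the Skorohod representation $L_t=\sup_{\ell\le t}\big(-h+B_\ell-\int_0^\ell V_w\,dw\big)\vee 0$ from Theorem \ref{thm:existence} together with $\int_0^\ell V_w\,dw\ge u\ell$ for $\ell\le t$ and $-h\le 0$, one gets $L_t\le \sup_{\ell\le t}(B_\ell-u\ell)\le \sup_{\ell\ge 0}(B_\ell-u\ell)=:M_u$ (the supremum over $\ell\le t$ already includes $\ell=0$, where the value is $0$, so the $\vee 0$ is vacuous).

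Substituting these into $V_t=\nu-\int_0^t(\gamma V_s+g)\,ds+L_t$ and using $V_t>u$ on $\{\tau_u^V>t\}$ yields $u<\nu-t(\gamma u+g)+M_u$, i.e.
$\{\tau_u^V>t\}\subseteq\{M_u>u-\nu+t(\gamma u+g)\}$, with no dependence on $h$. Since $M_u$ is $\mathrm{Exponential}(2u)$‑distributed (Chapter 3.5 of \cite{kar}), its tail at the level $u-\nu+t(\gamma u+g)$ equals $e^{-2u(u-\nu+t(\gamma u+g))}$ when that level is nonnegative and is $\le 1$ otherwise; in both cases $\prob_{(h,\nu)}(\tau_u^V>t)\le 1\wedge e^{-2u(u-\nu+t(\gamma u+g))}$, uniformly in $h\ge 0$. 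This is the first displayed bound.

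For the exponential‑moment estimate I would start from $\expec_{(h,\nu)}e^{\eta\tau_u^V}=1+\eta\int_0^\infty e^{\eta t}\,\prob_{(h,\nu)}(\tau_u^V>t)\,dt$ and split the integral at the time $t^\ast=(\nu-u)/(\gamma u+g)$ at which the level $u-\nu+t(\gamma u+g)$ vanishes: on $[0,t^\ast]$ bound $\prob_{(h,\nu)}(\tau_u^V>t)\le 1$, and on $[t^\ast,\infty)$ use the first part, $\prob_{(h,\nu)}(\tau_u^V>t)\le e^{2u(\nu-u)}e^{-2u(\gamma u+g)t}$. The tail integral over $[t^\ast,\infty)$ converges exactly because $\eta<u(\gamma u+g)<2u(\gamma u+g)$, and evaluating both pieces (and invoking $\eta<u(\gamma u+g)$ once more to absorb the resulting $\eta/(2u(\gamma u+g)-\eta)<1$ factor) produces the two‑term bound $e^{\frac{\eta}{\gamma u+g}(\nu-u)}+e^{2u(\nu-u)-\frac{\eta}{\gamma u+g}(\nu-u)}$ in the statement; since each of the two exponents is at most $2u(\nu-u)$ (the first because $\eta\le 2u(\gamma u+g)$, the second because a nonnegative quantity is subtracted), this is in turn at most $2e^{2u(\nu-u)}$, again uniformly in $h\ge 0$.

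There is no serious obstacle here; the only step requiring a little care is the domination of the local time $L_t$ on $\{\tau_u^V>t\}$ — one must notice that $V_s>u$ forces $\int_0^\ell V_w\,dw$ to grow at least linearly with slope $u$, so the Skorohod reflection term is controlled by $\sup_{\ell\ge0}(B_\ell-u\ell)$, whose law is explicit. Everything else is the routine tail‑integral computation used repeatedly elsewhere in the paper, and the uniformity in $h$ is automatic because $h$ enters the Skorohod formula only through the harmless term $-h\le 0$.
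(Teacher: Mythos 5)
Your proof of the tail bound is correct and essentially the same as the paper's: on $\{\tau_u^V>t\}$ dominate $V_t$ by $\nu-t(\gamma u+g)+\sup_{s<\infty}(B_s-su)$ using the Skorohod formula and $V_s>u$, then invoke the Exponential$(2u)$ law of the all-time maximum. For the exponential moment the paper merely says the second claim is a consequence of the first; your split at $t^\ast=(\nu-u)/(\gamma u+g)$ is a sound way to fill that in, but there is a small arithmetic slip in what you say the computation yields: carrying out the tail integral $\eta\int_{t^\ast}^\infty e^{\eta t}e^{2u(\nu-u)}e^{-2u(\gamma u+g)t}\,dt$ in fact gives $\frac{\eta}{2u(\gamma u+g)-\eta}\,e^{\eta t^\ast}$, not $e^{2u(\nu-u)-\eta t^\ast}$, so your method actually produces the (tighter) bound $2e^{\frac{\eta}{\gamma u+g}(\nu-u)}$ rather than reproducing the statement's two-term intermediate form literally. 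Since that is still $\le 2e^{2u(\nu-u)}$, the final claim follows all the same; the discrepancy is cosmetic and does not affect correctness.
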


\begin{proof} Fix $(H_0, V_0) = (h, \nu) \in \Real_+ \times (u, \infty)$. The definition of $V$ gives for $t < \tau_u^V$,
\begin{equation*}
    u < V_t \leq  \nu - t(\gamma u + g) + 0\vee \underset{s \leq t}{\sup}\left(-h + B_s - su \right) \leq \nu - t(\gamma u + g) +  \underset{s < \infty}{\sup}\left(B_s - su \right). 
\end{equation*}
This bound implies the first claim of the lemma upon using the fact that for $u > 0$, $\sup_{s < \infty}\left(B_s - su \right) \overset{d}{=} \text{Exponential}(2u)$ (see Chapter 3.5 of \cite{kar}). The second claim is a consequence of the first.
\end{proof}

\end{appendix}
\textbf{Acknowledgement: } SB was partially supported by a Junior Faculty Development Grant made by UNC, Chapel Hill. The authors thank Mauricio Duarte and Amarjit Budhiraja for several stimulating discussions.

\bibliographystyle{unsrt}
\bibliography{refs}

\end{document}